\colorlet{lgray}{white!85!black}
\colorlet{lred}{white!75!red}
\newtheorem{theorem}{Theorem} %[section]
\newtheorem*{theorem*}{Theorem}
\newtheorem{lemma}[theorem]{Lemma}
\newtheorem{proposition}[theorem]{Proposition}
\newtheorem{corollary}[theorem]{Corollary}
\theoremstyle{definition}
\newtheorem{definition}[theorem]{Definition}
\theoremstyle{remark}
\newtheorem{remark}[theorem]{Remark}
\numberwithin{equation}{section} \numberwithin{theorem}{section}
\newcommand{\GT}{\mathbb{GT}}
\newcommand{\R}{\mathbb R}
\newcommand{\pa}{\partial}
\newcommand{\mes}{\mathbf m}
\newcommand{\ii}{{\mathbf i}}
\newcommand{\GG}{\mathcal G}
\newcommand{\pr}{\mathrm{pr}}
\renewcommand{\upsilon}{\vartheta}
\newcommand{\rh}{\mathfrak{r}} % just probability measure
\newcommand{\bb}{\boldsymbol\beta} %for random beta
\newcommand{\BB}{\boldsymbol{\mathcal{B}}} %for random beta
\renewcommand{\Im}{\operatorname{Im}}
\newcommand{\ssp}{\hspace{1pt}}
\newcommand{\of}{:}
\newcommand{\tmmathbf}[1]{\ensuremath{\boldsymbol{#1}}}
\newcommand{\tmop}[1]{\ensuremath{\operatorname{#1}}}
\begin{document}

\title[Domino Tilings in Random Environment]{Domino Tilings of the Aztec Diamond in Random Environment and Schur Generating Functions}

\author{Alexey Bufetov}\address[Alexey Bufetov]{Institute of Mathematics, Leipzig University, Augustusplatz 10, 04109 Leipzig, Germany.}\email{alexey.bufetov@gmail.com}

\author{Leonid Petrov}\address[Leonid Petrov]{Department of Mathematics, University of Virginia,
141 Cabell Drive, Kerchof Hall, Charlottesville, VA, 22902, USA}\email{lenia.petrov@gmail.com}

\author{Panagiotis Zografos}\address[Panagiotis Zografos]{Institute of Mathematics, Leipzig University, Augustusplatz 10, 04109 Leipzig, Germany.}\email{pzografos04@gmail.com}

\begin{abstract}
We study the asymptotic behavior of random domino tilings of the Aztec diamond of size $M$ in a
random environment, where the environment is a one-periodic
sequence of i.i.d.\ random weights attached to domino
positions (i.e., to the edges of the underlying portion of
the square grid).
We consider two cases: either the variance of the
weights decreases at a critical scale $1/M$, or the distribution of the weights is fixed.
In the
former case, the unrescaled
fluctuations of the domino height function are
governed by the
sum of a Gaussian Free Field and an independent Brownian
motion. In the latter case, we establish
fluctuations on the much larger scale $\sqrt M$, given by the Brownian motion alone.

To access asymptotic fluctuations
in random environment, we
employ the method of Schur generating
functions.
Moreover, we substantially
extend the known
Law of Large Numbers and
Central Limit Theorems
for particle systems via Schur generating functions in order
to apply them to our setting.
These results might be of independent interest.
\end{abstract}

\maketitle

\section{Introduction}

\subsection{Overview}

Domino tilings of Aztec diamond were introduced and
enumerated by \cite{elkies1992alternating}. Since then,
random domino tilings of Aztec diamond were found to be a
very rich and exciting research topic. For a uniformly
random choice of a tiling, the existence of the arctic curve
(see \Cref{fig:uniform_200} for an illustration) was first established in
\cite{jockusch1998random}. The local behavior of the tiling
in the neighborhood of the arctic curve was studied in
\cite{Johansson2005arctic} --- it
turned out to be related to the KPZ universality class. The
fluctuations of the height function of the tiling were
established in \cite{chhita2015asymptotic},
\cite{bufetov2016fluctuations} and turn out to be related to the
Gaussian Free Field, in accordance with the Kenyon-Okounkov
conjecture, see \cite{KOS2006},
\cite{OkounkovKenyon2007Limit}.

\begin{figure}[htpb]
\centering
\includegraphics[width=0.7\textwidth]{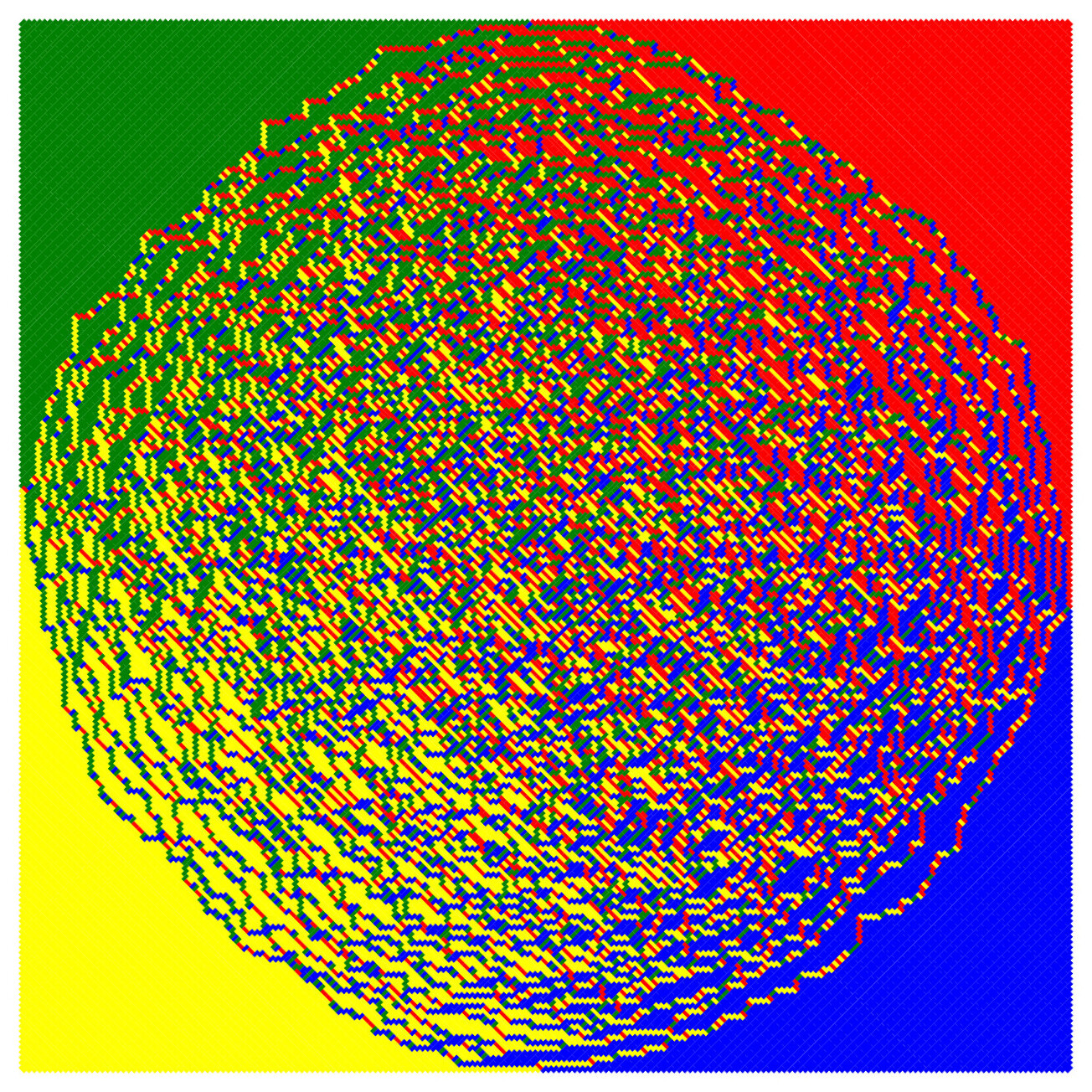}
\caption{A uniformly random domino tiling of the Aztec diamond of size $M=200$.}
\label{fig:uniform_200}
\end{figure}

Much attention has also been devoted to non-uniform random
tilings. A standard choice, inspired by statistical
mechanics, is to assign a positive weight to each potential
domino position in the Aztec diamond; and to select a tiling
with probability proportional to the product of the weights
of all dominos it contains.

A completely arbitrary choice of such weights seems to lead
to random domino tilings which currently available tools are
not strong enough to analyze in detail. However, several
specific choices of weights were thoroughly analyzed in the
last twenty years, leading, in particular, to beautiful
connections with other fields of mathematics. The simplest
out of these choices is a one-periodic weighting. It is well understood that
such a choice of weights leads to a Schur measure on Young
diagrams, the object that was introduced in and studied in
\cite{okounkov2001infinite}, \cite{okounkov2003correlation}.

\begin{figure}
\includegraphics[width=\textwidth]{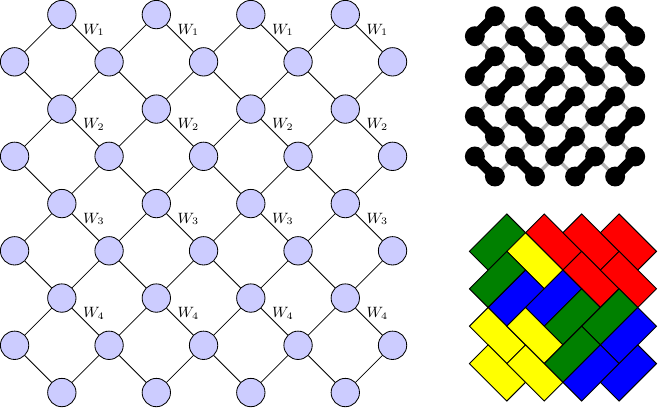}
\caption{Left: One-periodic weights $W_i$ attached to the edges of the Aztec diamond graph
of size $M=4$. The edges which are not labeled have weight $1$.
We assume that $W_i$'s form an i.i.d.\ random sequence with common distribution $\BB$.
Right: An example of a
dimer configuration on the Aztec diamond graph, and the corresponding domino tiling.
Given the weights $W_i$, this configuration has conditional probability proportional to $W_1^3W_2^2$.
In terms of domino colors, we assign nontrivial Boltzmann weights to red dominos.}
\label{fig:AztecWeights-intro}
\end{figure}

Even richer phenomena emerge in random domino tilings of the Aztec diamond when two-periodic or, more generally, multi-periodic weights are considered; see, for instance,
\cite{chhita2015asymptotic},
\cite{ChhitaYoung2014},
\cite{chhita2016domino},
\cite{duits2017two},
\cite{Berggren2021},
\cite{BerggrenBorodin2023},
\cite{KenyonPrause2024},
\cite{BergrenBorodin2024crystallization},
\cite{boutillier2024fock},
\cite{bobenko2024dimers},
\cite{mason2022two}
for an (incomplete) list of recent results in this direction.

In the present work we move in a different direction by
allowing the weights themselves to be random. We focus on
what is arguably the simplest nontrivial setting: a
one-periodic model in which the parameters $W_1,W_2,\dots,W_M$
are i.i.d.\footnote{Independent identically distributed.}
with a common distribution $\BB$ (see \Cref{sub:Aztec-intro} below for a precise definition,
and
\Cref{fig:AztecWeights-intro}, left, for an illustration of the weighting).
As far as we are aware, there
is essentially no mathematical literature on dimer models
with random edge weights. Related studies in mathematical
physics, such as \cite{perret2012super}, treat the Aztec
diamond with all weights chosen i.i.d., but the specific
model examined here seems to be new.
As we were preparing this manuscript, we learned
of an ongoing work
\cite{DuitsVanPeski2025}
on a different
model involving domino tilings of the Aztec diamond with
random edge weights.

\medskip

We distinguish two regimes for domino tilings of the Aztec diamond of size $M\to\infty$ with random weights:
\begin{enumerate}[$\bullet$]
\item \emph{Critical vanishing variance.}  The variance of
$\BB=\BB_M$ decays like $M^{-1}$, so that the fluctuations
produced by the randomness of the weights and those present
in the uniform model occur on the same scale. These critically scaled random weights
do not change the limit shape of the height function
of the domino tiling (\Cref{thm:limit_shape_decreasing_variance}).
We establish the Central Limit Theorem (\Cref{th:main-decreasing,th:main-decreasing-multilevel}).
See \Cref{th:main-decreasing-intro} for an informal version of the statement.
\item \emph{Fixed variance.}  The distribution $\BB$ is independent of $M$.  Here, we prove both a Law of Large Numbers and a Central Limit Theorem for the height function; see \Cref{LLN:tilings-fixed} and \Cref{th:CLT-tilings},
as well as
\Cref{th:main-fixed-intro} for an informal version of the statements.
\end{enumerate}
Both families of results are universal in the sense that they
do not depend on the specific distribution $\BB$ of the random weights
(provided that the distribution satisfies suitable mild conditions).

To establish these asymptotic results, we employ the method of Schur
generating functions developed in
\cite{GorinBufetov2013free}, \cite{bufetov2016fluctuations},
\cite{BufetovGorin2019};
One of important applications of
the method of Schur generating functions
is the
analysis of global fluctuations of uniformly random domino tilings of the Aztec
diamond (performed in \cite{bufetov2016fluctuations}).
Further applications and extensions of the method of Schur generating functions
include, for example,
\cite{BufetovKnizel2018} (domino tilings of more general domains),
\cite{borodin2019product}, \cite{gorin2022gaussian} (random matrix models with unitary symmetry),
and \cite{huang2021law}, \cite{cuenca2025discrete} (extension to Jack generating functions, with applications to
general Beta random matrix models).

The Schur generating function techniques of
\cite{GorinBufetov2013free},
\cite{bufetov2016fluctuations}
also cover deterministic
one-periodic weights. 
Moreover, these techniques also cover the case of decreasing
variance of random weights, as we show in
\Cref{sec:decreasing-variance}. 
The reason for this is that the fluctuations are of the same
order as in the case of deterministic weights, even though
the distribution of the fluctuations
changes.
On the other hand, the presence of random weights with a
fixed distribution $\BB$ (the second case above) involves markedly different
scalings and lies beyond the scope of the known works, so we
must significantly extend their general results. The Law of
Large Numbers (\Cref{th:LLN}) and Central Limit
Theorem (\Cref{th:CLT-gen}) for Schur generating
functions obtained here constitute the principal
contributions of the present paper. Although we apply
them only for the Aztec diamond, we expect these theorems to
be useful in other settings.

Let us remark that, although the determinantal process approach (either via asymptotics of the kernel, or through connections to orthogonal polynomials) is one of the standard methods for studying dimer models, a dimer model with generic random weights produces a point process that is likely not determinantal.

\subsection{Domino tilings with random edge weights. Model and results}
\label{sub:Aztec-intro}

The Aztec diamond graph is a classical bipartite graph
embedded in the square lattice; see the left panel of
\Cref{fig:AztecWeights-intro}. Let $M$ denote its size
(order), that is, the number of lattice vertices along each
side.
Dimer coverings (which are the same as perfect matchings) of this graph
are essentially the same as tilings of the Aztec diamond
by $1\times 2$ (or $2\times 1$) dominos.
We refer to \Cref{fig:AztecWeights-intro} for the
correspondence.
We equip the edges of the Aztec diamond graph with
one-periodic weights $(W_1,\dots,W_M)$ shown in
\Cref{fig:AztecWeights-intro}. Here, ``one-periodic'' means
that the weights change in one direction, but are repeated
in the other.
The weights
$W_1,\dots,W_M$ are assumed to be independent and
identically distributed according to a probability law $\BB_M$ on $\mathbb{R}_{>0}$ (possibly
depending on $M$).
Our aim is to study the asymptotic
behavior, as $M\to\infty$, of a random domino tiling on the
graph with these weights. In detail, we consider a probability measure
on the set of all dimer coverings (perfect matchings) $D$ of the Aztec diamond graph
defined as
\begin{equation*}
	\operatorname{\mathbf{P}}( D )\coloneqq\frac{1}{Z}\prod_{e\in D} \operatorname{weight} (e),
\end{equation*}
where $\operatorname{weight}(e)$ is either equal to $1$, or is one of the random weights $W_i$.

\begin{figure}[htpb]
\centering
\includegraphics[height=0.42\textwidth]{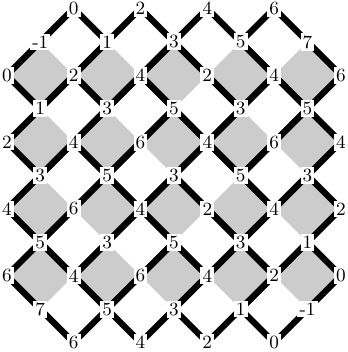}
\qquad
\includegraphics[height=0.42\textwidth]{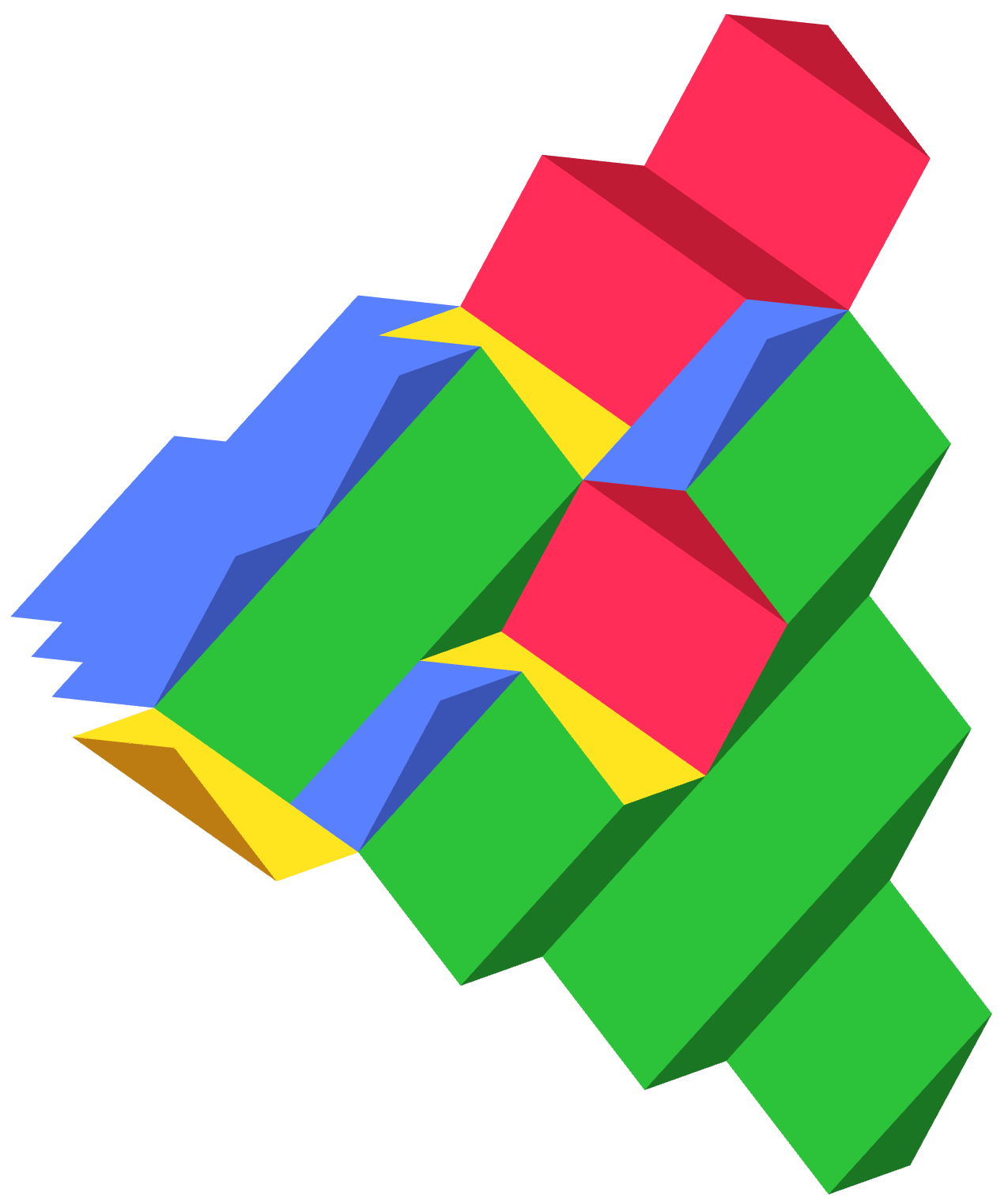}
\caption{A domino tiling of the Aztec diamond of size $M=4$ together with the corresponding height function
and its 3D plot. Interactive 3D plots of domino height functions is available online at
\cite{petrov2025Aztec_general}.
The height function changes by $\pm 1$ along the domino edges, and by $\pm 3$ across the
dominos. The sign of the change depends on the parity with respect to the checkerboard coloring of the
cells of $\mathbb{Z}^2$. The height function (up to a global shift) determines the domino tiling uniquely.}
\label{fig:height-function}
\end{figure}

Each domino tiling of the Aztec diamond determines a height function \cite{thurston1990conway}.
See \Cref{fig:height-function} for an illustration.
We describe the asymptotic behavior of our random domino tilings through this height function:

\begin{theorem}[Informal version of \Cref{thm:limit_shape_decreasing_variance,th:main-decreasing,th:main-decreasing-multilevel}]
\label{th:main-decreasing-intro}
Assume that the variance of the distribution $\BB_M$ decays
at rate $1/M$, and that the corresponding random weights $W_i=W_i(M)$ converge in probability
to a deterministic constant $w>0$. Then the height function
has the same limit shape as
in the case of fixed deterministic weights $W_i\equiv w$.\footnote{In particular, the
arctic curve in this case is an ellipse \cite{jockusch1998random} inscribed into the square bounding the Aztec diamond.}
Furthermore, the
fluctuations of the height function in
suitable coordinates are described by the sum of two
independent random objects:
(a deterministic image of)
the
Gaussian Free Field, and a
one-dimensional Brownian motion.
\end{theorem}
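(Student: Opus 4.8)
The plan is to prove this via the method of Schur generating functions, exploiting the fact that the critically scaled random weights produce fluctuations on the same order as the deterministic case, so that the general machinery of \cite{bufetov2016fluctuations} applies with modifications to account for the randomness. First I would recall that a one-periodic weighting of the Aztec diamond corresponds to a sequence of Schur measures on Young diagrams (equivalently, on interlacing particle arrays), and that the height function of the tiling is encoded by the positions of these particles across the successive levels. The Schur generating function of the particle configuration at a given level factorizes explicitly in terms of the weights $W_1,\dots,W_M$; conditionally on the realization of the weights, this is a standard Schur-measure computation. The key structural observation is that, under the assumption that $\mathrm{Var}(\BB_M)\sim c/M$ and $W_i\to w$ in probability, each individual weight deviates from $w$ by $O(M^{-1/2})$, so the logarithm of the conditional Schur generating function splits into a leading deterministic part (identical to the $W_i\equiv w$ case) plus a random correction of the critical order.

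The main steps would be as follows. First, establish the limit shape (\Cref{thm:limit_shape_decreasing_variance}): since the weights converge in probability to $w$, the leading-order asymptotics of the log-Schur generating function coincide with the deterministic case, and the Law of Large Numbers for Schur generating functions (\Cref{th:LLN}) gives the same limiting empirical measure, hence the same ellipse as in \cite{jockusch1998random}. Second, for the fluctuations, I would compute the joint cumulants of linear statistics of the height function by differentiating the log of the conditional Schur generating function. Conditionally on the weights, the Central Limit Theorem for Schur generating functions (\Cref{th:CLT-gen}) produces the Gaussian Free Field contribution exactly as in the deterministic analysis. The crucial new term comes from the randomness: writing $W_i = w(1 + M^{-1/2}\xi_i)$ with $\xi_i$ i.i.d.\ of finite variance, the sum $\sum_i (\log W_i - \E \log W_i)$ contributes, after appropriate rescaling and identification of the relevant spatial coordinate, a Gaussian process which—because the $W_i$ are indexed by a single direction and enter additively—is a one-dimensional Brownian motion. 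I would verify that this Brownian term is asymptotically independent of the Gaussian Free Field by checking that the mixed cumulants vanish in the limit, which follows because the GFF arises from the deterministic part of the generating function while the Brownian motion arises from the independent fluctuations of the $\xi_i$.

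Concretely, the computation reduces to analyzing a contour integral or a combinatorial cumulant expansion in which the conditional (quenched) cumulants yield the GFF and the fluctuations of the weight-dependent prefactor across realizations yield the Brownian motion. By the law of total cumulance, the $k$-th cumulant of a linear statistic decomposes into the expectation of the quenched cumulant plus cross terms involving cumulants of the $\{\xi_i\}$. The expectation of the quenched second cumulant gives the GFF covariance, the variance of the quenched first cumulant (a linear functional of the $\xi_i$) gives the Brownian covariance, and all higher contributions vanish under the $M^{-1/2}$ scaling, which simultaneously establishes Gaussianity of both components and their independence.

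The hard part will be twofold. First, one must track precisely how the single index $i$ labeling the weights maps to the continuous spatial coordinate of the limiting Brownian motion, and confirm that the additive structure of $\sum_i \xi_i \,\partial_{W_i}(\log \text{sgf})$ genuinely produces Brownian motion (with the correct variance) rather than some other Gaussian field; this requires a careful steepest-descent or residue analysis of the weight-derivatives of the Schur generating function uniformly in the realization of the weights. Second, and more delicately, one must justify interchanging the asymptotic (steepest-descent) analysis with the expectation over the random environment—i.e.\ control the error terms in the quenched CLT \emph{uniformly} over the typical realizations of $W_1,\dots,W_M$, so that integrating them against $\BB_M$ does not destroy the limit. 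This uniformity is exactly where the critical $M^{-1}$ scaling of the variance is essential, and where the substantial extension of the general Schur-generating-function theorems (\Cref{th:LLN,th:CLT-gen}) does the heavy lifting.
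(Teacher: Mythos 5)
Your proposal takes a genuinely different route from the paper. You work \emph{quenched}: condition on the realization of the weights, apply a Schur-generating-function CLT realization-wise to produce the GFF, and then recombine with the environment fluctuations via the law of total cumulance. The paper never conditions. By \Cref{prop:SGF-rand}, the \emph{annealed} Schur generating function has the closed form $\bigl( \operatorname{\mathbf{E}}_{\BB} \prod_{i}(1-\bb+x_i\ssp\bb)\bigr)^{M-N}$, and the entire proof of \Cref{thm:limit_shape_decreasing_variance,th:main-decreasing} reduces to two derivative computations (\Cref{prop:SGF-asym-old}): the first log-derivative reproduces the deterministic function $\mathsf{F}$, while the cross-derivative $\partial_1\partial_2\log$ --- which vanishes identically in the quenched picture, since the quenched generating function factors over the $x_i$ --- picks up exactly the variance term $(1-\alpha)\ssp\sigma^2/\bigl((1-\beta+\beta x_1)^2(1-\beta+\beta x_2)^2\bigr)$. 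A single application of the known CLT (\Cref{th:BG2-one-level}) then yields the covariance \eqref{eq:main-decreasing}, whose two summands \eqref{eq:covariance-structure-decreasing-variance-two-terms} are precisely your GFF and Brownian pieces. The annealed route entirely bypasses the uniformity-in-the-environment control that you correctly flag as the hard part of the quenched route; your decomposition (expectation of quenched covariance gives the GFF, variance of the quenched mean gives the Brownian motion, higher cross-terms vanish) matches the limiting structure and makes the independence more transparent, but it requires justifying a conditional CLT with random parameters, which is extra work the paper never has to do.

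There is, however, one concrete error in your choice of tools. You invoke \Cref{th:CLT-gen} for the quenched/GFF contribution, and in your final paragraph you claim that the new general theorems (\Cref{th:LLN,th:CLT-gen}) do ``the heavy lifting'' in this regime. \Cref{th:CLT-gen} is the paper's new \emph{fixed-variance} theorem, which normalizes the moments by $N^{k+1/2}$. In the critical-variance regime of \Cref{th:main-decreasing-intro}, the fluctuations of $p_k^{(\rho_N)}$ live on the smaller scale $N^k$, so at the normalization of \Cref{th:CLT-gen} the limit is \emph{identically zero} --- the paper states this explicitly in the remark following \Cref{th:CLT-gen}. Taken literally, your quenched CLT step would therefore produce a degenerate Gaussian and no GFF at all. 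The correct tool is the older \Cref{th:BG2-one-level} of \cite{bufetov2016fluctuations} (normalization $N^k$), which is what the paper applies; the remark at the start of \Cref{sub:LLN-CLT-decreasing-variance} says precisely that the new theorems of \Cref{subsec:Schur-generating-functions-intro} are not needed anywhere in this regime --- they are needed only for the fixed-distribution case, where the fluctuation scale genuinely becomes $N^{k+1/2}$. (Your use of \Cref{th:LLN} for the limit-shape step is harmless, since it generalizes \Cref{theorem_moment_convergence}, but the simpler older result suffices there as well.)
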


Kenyon-Okounkov conjecture \cite{Kenyon2001GFF},
\cite{Kenyon2004Height}, \cite{OkounkovKenyon2007Limit}
generally predicts Gaussian Free Field fluctuations of
random tilings in a variety of models, see \cite[Ch.
11]{gorin2021lectures} for an exposition. This behavior was
established in several tiling (dimer) models, see, e.g.,
\cite{Kenyon2001GFF},
\cite{Kenyon2004Height},
\cite{BorFerr2008DF},
\cite{Petrov2012GFF},
\cite{Berest-Laslier-2016},
\cite{BerggrenNicoletti2025}.
For domino tilings of the Aztec diamond with nonrandom unit edge weights
the Gaussian Free Field fluctuations were established in  \cite{chhita2015asymptotic}, \cite{bufetov2016fluctuations}.

We show that introducing random edge weights alters the
fluctuations of the height function even when the variance
of the weight distribution tends to zero. Under the scaling
of the variance by $1/M$, as in
\Cref{th:main-decreasing-intro},
the new fluctuations produced by random weights and the
``old'' fluctuations arising from fixed weights (which give
the Gaussian Free Field) occur on the same scale, and
contribute independently to the total fluctuations of the height function.

\begin{theorem}[Informal version of \Cref{LLN:tilings-fixed} and \Cref{th:CLT-tilings}]
\label{th:main-fixed-intro}
Assume that the distribution $\BB_M$ is independent of $M$
and possesses moments of all orders. Then the height
function satisfies a Law of Large Numbers. Moreover, its
fluctuations occur on scale $\sqrt{M}$ and, in suitable
coordinates, are described by a one-dimensional Brownian
motion.
\end{theorem}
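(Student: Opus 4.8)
The plan is to pass from domino tilings to a particle system via the one-periodic Schur measure correspondence, conditioned on the realization of the random environment $(W_1,\dots,W_M)$, and then to feed the resulting \emph{random} Schur generating function into the extended limit theorems \Cref{th:LLN} and \Cref{th:CLT-gen}. First I would recall that, conditionally on $(W_1,\dots,W_M)$, a fixed horizontal slice of the tiling is distributed according to a Schur measure on signatures, and that the height function restricted to that slice is an affine image of the counting function of the associated particle configuration. Thus proving a Law of Large Numbers and a Central Limit Theorem for the height function reduces to controlling the linear statistics of this particle system, the central object being the Schur generating function $S_\lambda(x_1,\dots,x_N)$ of the conditioned measure, which in the one-periodic case admits an explicit product form in the variables $x_i$ and the weights $W_j$.

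Second, I would expand $\log S$ around $x=(1,\dots,1)$. Its relevant Taylor coefficients are, up to normalization, empirical power sums of the weights, i.e.\ quantities of the form $\sum_{j=1}^M \phi_k(W_j)$ for explicit functions $\phi_k$ fixed by the combinatorics of the model. This is precisely the point at which the randomness of the environment enters. Since $\BB$ has moments of all orders, every $\phi_k(W)$ is integrable, and by the strong Law of Large Numbers $\frac1M\sum_{j=1}^M \phi_k(W_j)\to \E[\phi_k(W)]$ almost surely; substituting these limits into the hypotheses of \Cref{th:LLN} yields a \emph{deterministic} limiting profile and hence the limit shape of the height function.

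Third, and most importantly, I turn to fluctuations. There are two competing sources: the intrinsic fluctuations of the Schur measure, which (as in \cite{bufetov2016fluctuations}) live on the $O(1)$ scale and produce the Gaussian Free Field, and the fluctuations coming from the random environment. Because the weights have a \emph{fixed} positive variance, the centered partial sums $\sum_{j=1}^{\lfloor tM\rfloor}\bigl(\phi(W_j)-\E[\phi(W)]\bigr)$ of the relevant functional are of order $\sqrt M$ and, by Donsker's invariance principle after the diffusive rescaling $j\approx tM$, converge at the process level to a Brownian motion in $t$, the rescaled index along which the weights vary. This $\sqrt M$ scale dominates the $O(1)$ Schur/GFF contribution, so in the height-function scaling the GFF is washed out and only the Brownian image survives. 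I would make this precise by applying \Cref{th:CLT-gen} conditionally on the environment and then transferring the Gaussian structure of the conditioned statistics, together with the Donsker limit of the weight partial sums, to the unconditional law of the rescaled height function; the ``suitable coordinates'' are exactly those that linearize the dependence of the slice statistic on the partial-sum process and collapse the field to a single one-dimensional Brownian motion.

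The hard part will be extending the Schur generating function CLT to a defining profile that is itself random and fluctuating on the anomalously large scale $\sqrt M$, and then upgrading one-point Gaussian convergence to a process-level statement identifying the limit as Brownian motion. This requires (i) a clean separation of the two scales, showing the intrinsic $O(1)$ fluctuations do not contribute at order $\sqrt M$; (ii) joint convergence across several slices, where the multilevel analysis and the precise covariance bookkeeping of \Cref{th:CLT-gen} are essential; and (iii) tightness of the rescaled height process, so that the finite-dimensional Gaussian limits assemble into a genuine Brownian motion. Universality in $\BB$ is then automatic, since only the mean $\E[\phi(W)]$ and the variance of the relevant functional of $W$ enter the limit.
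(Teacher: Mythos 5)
Your proposal is broadly viable, but it takes a genuinely different route from the paper, and the difference is worth spelling out. The paper never conditions on the environment: its key step is \Cref{prop:SGF-rand}, which computes the \emph{annealed} Schur generating function $S_{\rho_N}(x_1,\dots,x_N)=\bigl(\operatorname{\mathbf{E}}_{\BB}\prod_{i=1}^N(1-\bb+x_i\ssp\bb)\bigr)^{M-N}$. This is a deterministic function, but it is \emph{not} multiplicative in the $x_i$'s, so the earlier results (\Cref{theorem_moment_convergence}, \Cref{th:BG2-one-level}) do not apply to it; the whole point of the new general theorems \Cref{th:LLN} and \Cref{th:CLT-gen} is to handle such non-multiplicative limits at the new scale $N^{k+1/2}$. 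The model results (\Cref{LLN:tilings-fixed}, \Cref{th:CLT-tilings}) then follow by direct substitution, and the environment-induced covariance emerges \emph{algebraically} from the non-multiplicativity: the difference $\pa_1\pa_2F-\pa_1F\ssp\pa_2F$ produces exactly the kernel $\mathsf{G}(z,w)=\operatorname{Cov}_{\BB}\bigl(\tfrac{\bb}{1-\bb+\bb z},\tfrac{\bb}{1-\bb+\bb w}\bigr)$ of \eqref{eq:F-G-def-s7}--\eqref{eq:cov-iid-answer}. Your route is instead quenched: conditionally on the weights the Schur generating function \emph{is} multiplicative with a random one-variable profile, so the old Bufetov--Gorin theorems apply almost surely in the environment (you do not actually need \Cref{th:LLN} or \Cref{th:CLT-gen} for this), and the Brownian part must then come from classical probability for the i.i.d.\ weights. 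What your route buys is transparency --- the GFF-versus-Brownian scale separation and universality are immediate; what the paper's route buys is that all error control is packaged into two general theorems and the limiting covariance comes out in closed form.

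The soft spot in your plan is the ``transfer'' step, and your own diagnosis of the hard part points in the wrong direction. The decomposition you need is
\begin{equation*}
\frac{p_k^{(\rho_N)}-\operatorname{\mathbf{E}}\bigl[p_k^{(\rho_N)}\bigr]}{N^{k+1/2}}
=\frac{p_k^{(\rho_N)}-\operatorname{\mathbf{E}}\bigl[p_k^{(\rho_N)}\mid\vec{\bb}\ssp\bigr]}{N^{k+1/2}}
+\frac{\operatorname{\mathbf{E}}\bigl[p_k^{(\rho_N)}\mid\vec{\bb}\ssp\bigr]-\operatorname{\mathbf{E}}\bigl[p_k^{(\rho_N)}\bigr]}{N^{k+1/2}}.
\end{equation*}
The first term is $O(N^{-1/2})$ because, quenched, $\log S_{\rho_N}$ is exactly additive in the $x_i$'s, so \Cref{th:BG2-one-level} applies with $\mathsf{G}\equiv 0$ and gives fluctuations at scale $N^k$; this is your point (i) and it is fine. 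But the second, dominant term is \emph{not} a partial sum of i.i.d.\ variables: $\operatorname{\mathbf{E}}\bigl[p_k^{(\rho_N)}\mid\vec{\bb}\ssp\bigr]$ is a nonlinear (polynomial) functional of the normalized power sums $\frac1N\sum_{j>N}\bb_j^{\ssp s}$ of the environment. To invoke the classical CLT/Donsker you must first (a) establish a finite-$N$ expansion of this quenched mean with error $o(N^{k+1/2})$ --- a quantitative version of the LLN moment computation, which is precisely the work your sketch omits --- and then (b) linearize via the delta method, jointly in $k$ and across levels. In particular, no ``extension of the Schur generating function CLT to a random profile fluctuating at scale $\sqrt M$'' is needed anywhere: in your quenched route the profile converges almost surely to a deterministic limit and the $\sqrt M$ fluctuations sit entirely in the quenched mean, while in the paper's annealed route there is no random profile at all. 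Recognizing (a) as the actual hard step and carrying it out (or switching to the paper's annealed computation) is what separates your outline from a proof.
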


Note that in this setting (typical for processes in random
environment, when the edge weights are sampled once from a
fixed distribution independent of $M$), the height function
behaves very differently from the fixed-weight case.
In general, its limit shape does not coincide with that of
any fixed-weight domino tiling of the Aztec diamond (see
\Cref{fig:Bernoulli_and_fixed,fig:uniform_random_0_2_300}
for illustrations; the first figure shows a special case in
which Bernoulli random edge weights yield the same limit
shape as a tiling with deterministic periodic weights).
Moreover, the fluctuations of the height function are now
much larger --- of order $\sqrt{M}$, in contrast to the
unnormalized fluctuations in \Cref{th:main-decreasing-intro}
--- and are governed by the Brownian motion.
Intuitively this may be explained by the fact that
by the classical Central Limit Theorem, linear statistics of $M$ i.i.d.\ random variables fluctuate on the scale $\sqrt{M}$.
These classical Gaussian fluctuations 
dominate the much smaller fluctuations
produced by the randomness of the domino tiling with fixed weights.

\begin{figure}[htpb]
\centering
\includegraphics[width=0.48\textwidth]{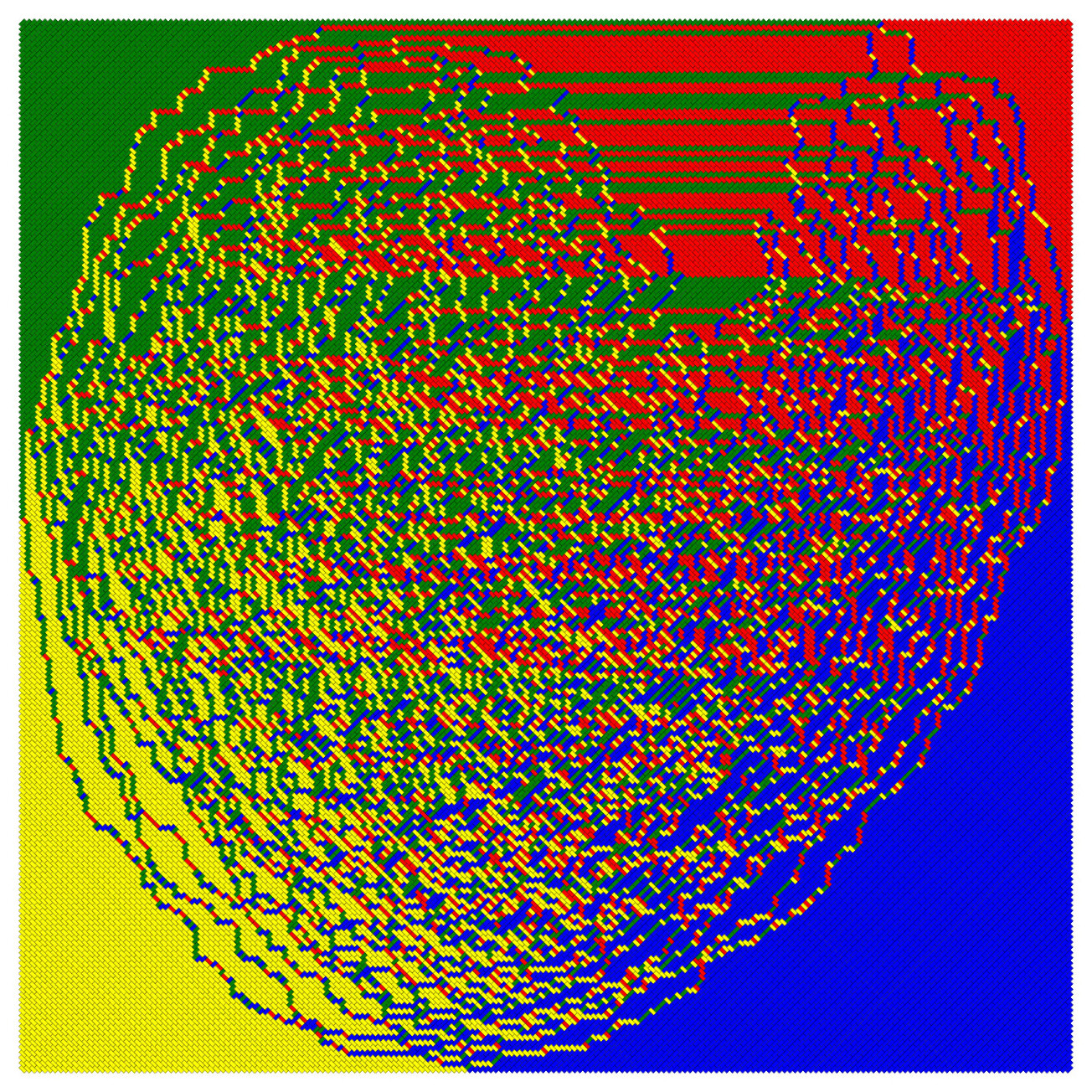}
\quad
\includegraphics[width=0.48\textwidth]{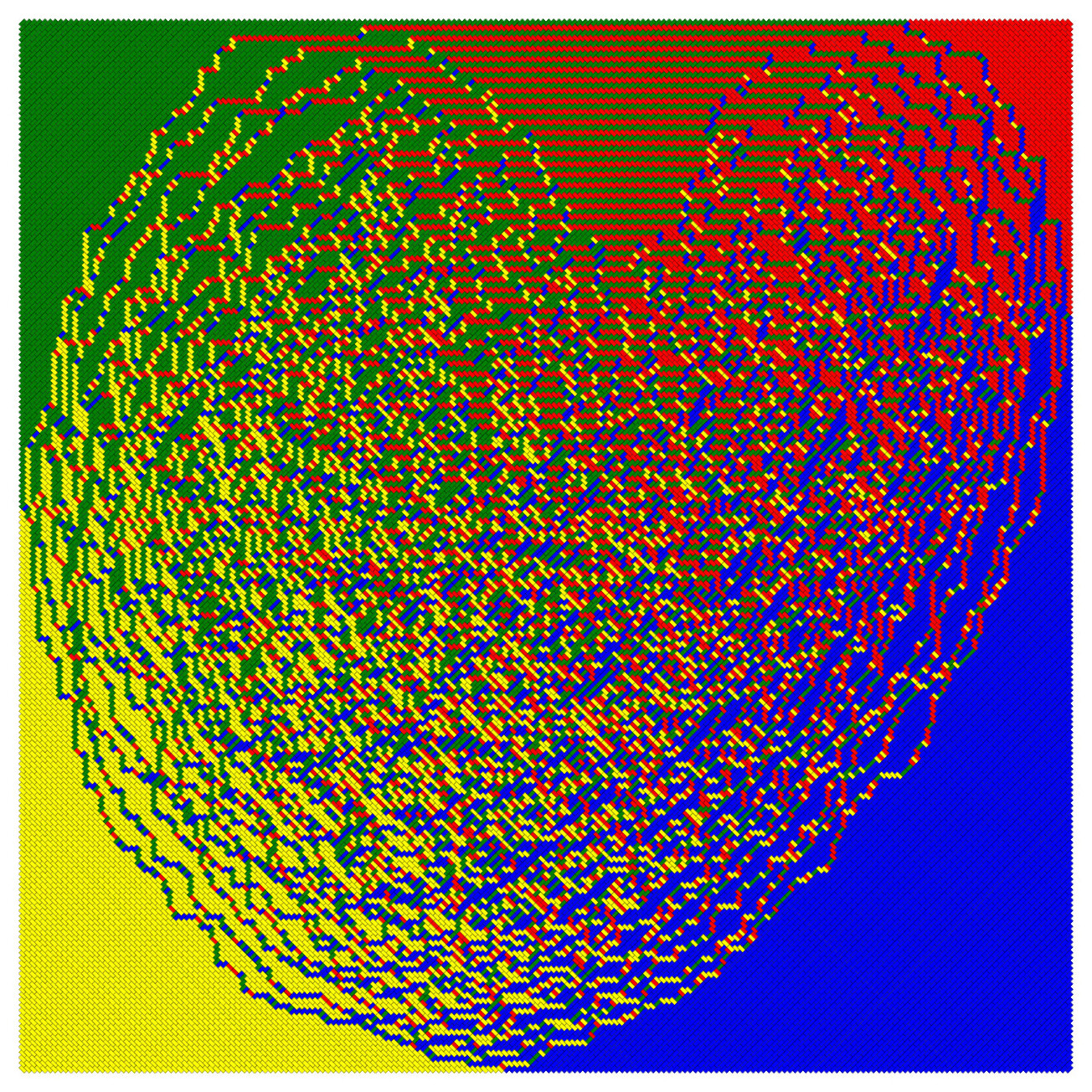}
\caption{Left: A random domino tiling of the Aztec diamond of size $M=200$ with i.i.d.\ Bernoulli weights with
$\operatorname{\mathbf{P}}(W_i=\frac12)=\operatorname{\mathbf{P}}(W_i=5)=\frac{1}{2}$. Right:
A random domino tiling of the Aztec diamond of size $M=200$ with fixed nonrandom weights $W_i=\frac{1}{2}$ and $W_i=5$ for odd and even $i$,
respectively. The limit shapes for these two choices of weights --- random and deterministic periodic with the
same values of $W_i$ in the same proportions --- are the same.
See also \Cref{fig:limit-shape-examples} for limit shapes and arctic curves which we derive in this work.
These samples were generated with the domino shuffling algorithm of \cite{propp2003generalized};
the implementation we used is available online at \cite{petrov2025random_edge_Aztec_simulation}.}
\label{fig:Bernoulli_and_fixed}
\end{figure}

\begin{figure}[htpb]
\centering
\includegraphics[width=0.48\textwidth]{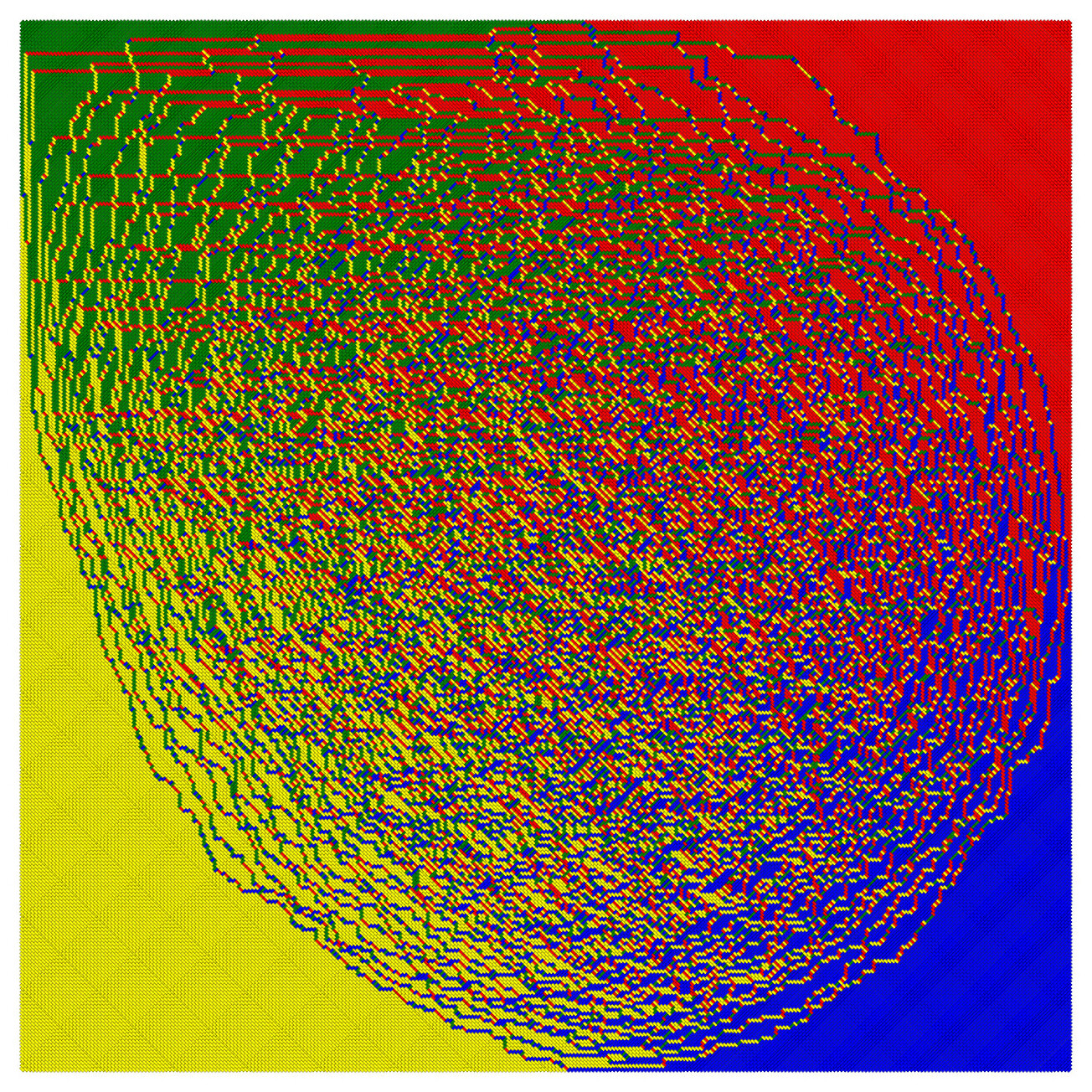}
\quad
\includegraphics[width=0.48\textwidth]{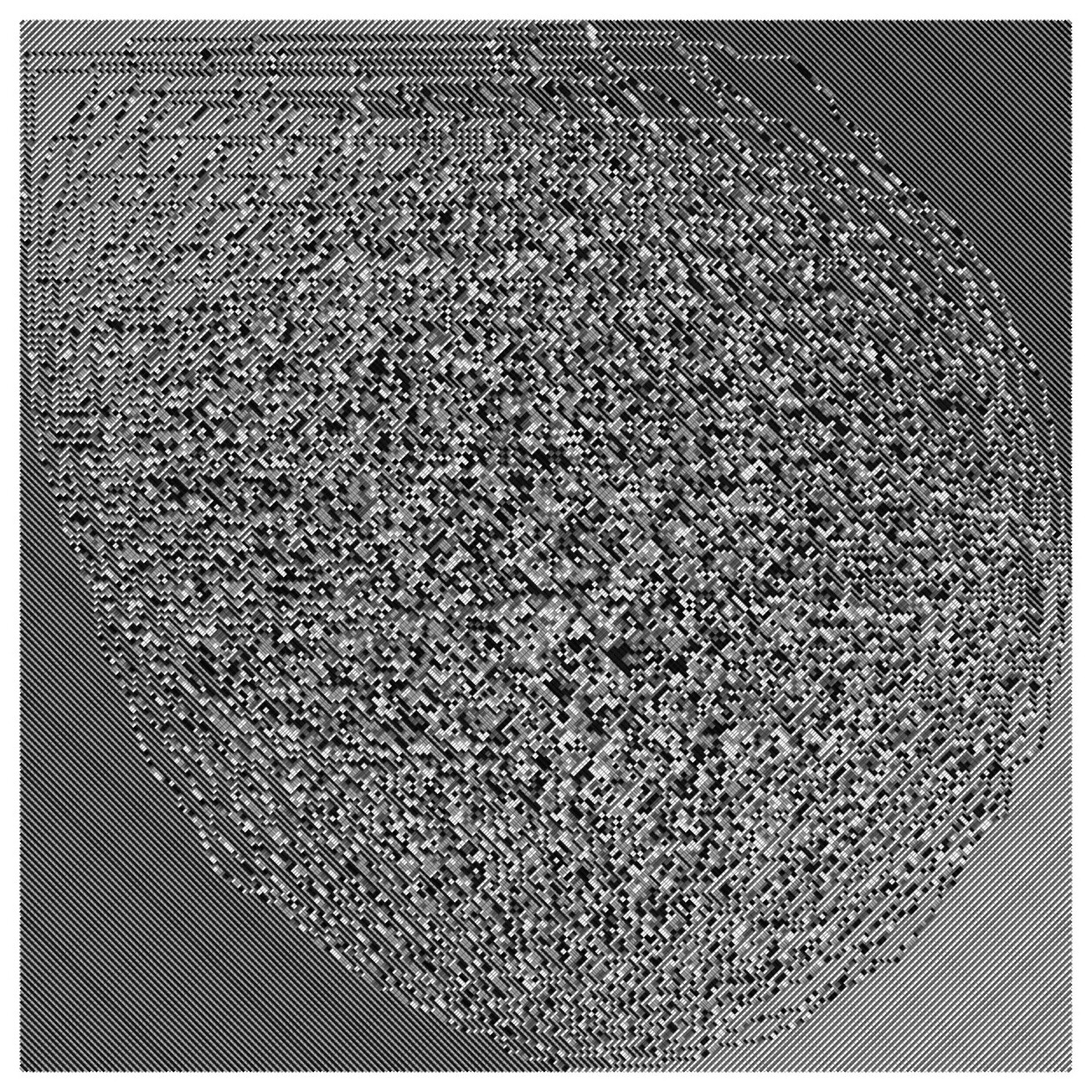}
\caption{A random domino tiling of the Aztec diamond of size $M=300$ with i.i.d.\ continuous uniform random weights $W_i$
on $[0,2]$. The right image is a grayscale version with eight shades depending on the parity of the dominos with respect to the lattice.
See also \Cref{fig:limit-shape-examples} for limit shapes and arctic curves which we derive in this work.
These samples were generated with the domino shuffling algorithm of \cite{propp2003generalized};
the implementation we used is available online at \cite{petrov2025random_edge_Aztec_simulation}.}
\label{fig:uniform_random_0_2_300}
\end{figure}

\subsection{Asymptotics via Schur generating functions. General results}
\label{subsec:Schur-generating-functions-intro}

Our key technical tool for the asymptotic analysis
of domino tilings of the Aztec diamond with random edge weights
is the method of Schur generating functions.
We refer to \Cref{prop:SGF-rand} for a precise statement connecting
the domino tiling model to Schur generating functions
(and also to
\Cref{rem:SGF-rand} for comparison with other approaches).

In this subsection, we present the most technical part of our results,
which concern asymptotic behavior
of particle systems
via Schur generating functions.
They extend the results of
\cite{GorinBufetov2013free},
\cite{bufetov2016fluctuations},
and
\cite{BufetovGorin2019},
which were not sufficient for our model with random edge weights.

An $N$-tuple of non-increasing integers $\lambda = (\lambda_1\ge \lambda_2 \ge \dots \ge \lambda_N)$
is called a \textit{signature} of length $N$. We denote by $\GT_N$ the set of all
signatures of length $N$.
Let
$s_{\lambda}(x_1, \dots, x_N)$
be the \emph{Schur function} parameterized by $\lambda$
More precisely, $s_\lambda(x_1,\ldots,x_N)$
is a homogeneous symmetric Laurent polynomial in the $x_i$'s
of degree $|\lambda|=\lambda_1+\ldots+\lambda_N $
(see \eqref{eq:Schur-function-def} for the full definition).
Let $\rho_N$ be a probability measure on the set $\GT_N$.
A \emph{Schur generating function}
$S_{\rho_N} (x_1,\dots,x_N)$ is a symmetric Laurent power series in $x_1,\dots,x_N$ given by
$$
 S_{\rho_N} (x_1,\dots,x_N) \coloneqq \sum_{\lambda \in \GT_N} \rho_N (\lambda)
 \ssp\frac{s_\lambda(x_1,\dots,x_N)}{s_\lambda(1^N)}.
$$
Define the empirical measure
\begin{equation*}
  m [ \rho_N ] \coloneqq \frac{1}{N} \sum_{i = 1}^N \delta \left(
  \frac{\lambda_i + N - i}{N} \right), \qquad \mbox{where $\lambda=(\lambda_1, \dots, \lambda_N)$ is $\rho_N$-distributed.}
\end{equation*}
If $F_{m[\rho_N]}(x)$ denotes the cumulative distribution function of $m [\rho_N]$,
then its rescaling of the form $N\ssp F_{m[\rho_N]}(x/N)$ corresponds to the (integer-valued) height function
of the domino tiling of the Aztec diamond (illustrated in \Cref{fig:height-function}) along a given one-dimensional slice.
We refer to \Cref{subsec:comb} for details on the correspondence between
particle configurations and domino tilings.

Asymptotic behavior of the empirical measure $m [\rho_N]$ can be
understood by looking at the
asymptotics of its Schur generating function,
as $N \to \infty$.
Our first general result is as follows:
\begin{theorem}[Law of Large Numbers]
  \label{th:LLN-intro} Let $\rho_N$, $N \in \mathbb{Z}_{\ge1}$, be a sequence of probability
  measures on the sets $\GT_N$. Assume that there exists a sequence
  of symmetric functions $F_k (z_1, \ldots, z_k)$, $k \ge1$, analytic
  in a (complex) neighborhood of $1^k\coloneqq (1,\ldots,1 )\in
	\mathbb{C}^k$,\footnote{Throughout the paper, we use the notation $1^m$
	to denote the vector $(1,\ldots,1)$ of $m$ ones, where $m\ge0$ is an arbitrary integer,
	and similarly for $0^m$.}
	such that
  \begin{equation}
		\label{assumptioN1-intro}
    \lim_{N \rightarrow \infty}  \sqrt[N]{S_{\rho_N} (u_1,
    \ldots, u_k, 1^{N - k})} = F_k (u_1, \ldots, u_k),
  \end{equation}
  where
	$k$ is fixed, and the
	the convergence is uniform in a complex neighborhood of $1^k$. Then,
	as $N\to\infty$,
	the
  random measure $m [\rho_N]$ converges
	in probability, in the sense of moments,\footnote{That is, all moments of the empirical measure $m [\rho_N]$,
	which are random variables, converge in probability to the moments of the limiting measure.}
	to a deterministic probability measure
  $\tmmathbf{\mu}$ on $\mathbb{R}$.
	Moreover, the moments $(\tmmathbf{\mu}_k)_{k\ge1}$ of the limiting measure $\tmmathbf{\mu}$ are
	given by
  \begin{multline}
  \tmmathbf{\mu}_k = \sum_{l = 0}^k \binom{k}{l} \frac{1}{(l + 1) !}
     \\ \times
     \pa_u^{\ssp l} \left[ (1+u)^k \left( \pa_1 F_{l+1} (1+u, 1+u w_{l+1}, 1+u w_{l+1}^2, \ldots, 1+u w_{l+1}^l ) \right)^{k-l} \right] \Big\vert_{u=0}, \label{eq:LLN-gen-intro}
  \end{multline}
 where $w_{m} \coloneqq \exp \left( \frac{2 \pi \sqrt{-1}}{m}
 \right)$ is the $m$-th root of unity, $\pa_u$ is the partial derivative with respect to $u$,
 and $\pa_1$ is the partial derivative with respect to the first variable in the function $F_{l+1}$.
\end{theorem}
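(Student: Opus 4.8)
The plan is to access the moments of the empirical measure $m[\rho_N]$ through differential operators that act diagonally on Schur functions, and then to read off their asymptotics directly from the hypothesis \eqref{assumptioN1-intro}. The starting point is the classical observation that, writing $\ell_i \coloneqq \lambda_i + N - i$ and $\Delta(x) \coloneqq \prod_{i<j}(x_i - x_j)$, the operator
\begin{equation*}
\mathcal{P}_k \coloneqq \frac{1}{\Delta(x)}\Bigl(\sum_{i=1}^N (x_i\pa_{x_i})^k\Bigr)\Delta(x)
\end{equation*}
satisfies $\mathcal{P}_k\, s_\lambda = \bigl(\sum_i \ell_i^k\bigr)s_\lambda$, since $\Delta(x)s_\lambda(x) = \det[x_i^{\ell_j}]$ and each $x_i\pa_{x_i}$ extracts the exponent from every monomial. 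Consequently $\mathcal{P}_k S_{\rho_N}(x)\big|_{x=1^N} = \E\bigl[\sum_i \ell_i^k\bigr] = N^{k+1}\,\E\bigl[\int x^k\,dm[\rho_N]\bigr]$, so the $k$-th moment of the mean measure is $N^{-(k+1)}\mathcal{P}_k S_{\rho_N}\big|_{1^N}$. The whole theorem thus reduces to two asymptotic statements: that $N^{-(k+1)}\mathcal{P}_k S_{\rho_N}\big|_{1^N} \to \tmmathbf{\mu}_k$ with $\tmmathbf{\mu}_k$ given by \eqref{eq:LLN-gen-intro} (the mean), and that the analogous quantity built from two operators $\mathcal{P}_k,\mathcal{P}_{k'}$ has covariance of lower order, so that $\int x^k\,dm[\rho_N]$ concentrates and converges in probability. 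The concentration estimate is obtained by the same operator computation carried one order further and is not the principal difficulty, so I focus on the mean.

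The core of the argument, and its main obstacle, is the evaluation of $\mathcal{P}_k S_{\rho_N}$ at the diagonal point $x = 1^N$: although $\mathcal{P}_k S_{\rho_N}$ is a genuine symmetric power series, both the numerator $\sum_i(x_i\pa_{x_i})^k(\Delta S)$ and the Vandermonde denominator vanish to order $\binom{N}{2}$ there, so one must resolve a highly degenerate indeterminacy. The remedy is to pass to exponential coordinates $x_i = e^{t_i}$, so that $x_i\pa_{x_i} = \pa_{t_i}$ and $\Delta = \prod_{i<j}(e^{t_i}-e^{t_j})$, expand by the Leibniz rule, and analyze the confluent limit $t\to 0$. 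The hypothesis \eqref{assumptioN1-intro} enters as $\tfrac1N\log S_{\rho_N}(e^{t_1},\dots,e^{t_k},1^{N-k}) \to \log F_k(e^{t_1},\dots,e^{t_k})$, uniformly near $0$, so that derivatives of $\log S_{\rho_N}$ in the first few variables are of order $N$ while the remaining variables stay pinned at $1$. The decisive point is that, after dividing by $\Delta$, the leading $N^{k+1}$ contribution localizes to clusters of coincident variables: a cluster of $l+1$ nearly equal coordinates contributes through the confluent Vandermonde $\prod_{1\le a<b\le l+1}(\zeta_a-\zeta_b)$ of the cluster, and resolving this confluent structure forces the cluster coordinates to the $(l+1)$-th roots of unity $\zeta_a = w_{l+1}^{\,a}$. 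This is exactly the origin of the arguments $1+u\,w_{l+1}^{\,j}$, $j=0,\dots,l$, at which $F_{l+1}$ is evaluated in \eqref{eq:LLN-gen-intro}.

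Crucially — and this is where the present setting departs from the deterministic theory of \cite{GorinBufetov2013free, bufetov2016fluctuations, BufetovGorin2019} — I do \emph{not} assume that the limit $F_k$ factorizes as $\prod_i F_1(x_i)$. In the classical multiplicative case the cluster contributions collapse so that only $F_1$ survives; here the full multivariate limits $F_{l+1}$ must be retained, which is precisely why the hypothesis is imposed for every $k\ge1$ rather than only for $k=1$. Tracking how the $k$ applications of $x_i\pa_{x_i}$ distribute over a cluster of size $l+1$ — which produces the binomial factor $\binom{k}{l}$, a factor $(1+u)^k$ from the cluster base point $x=1+u$, and the $(k-l)$-th power of the single distinguished derivative encoded by $\pa_1 F_{l+1}$ — and then extracting the leading coefficient in the cluster-scale parameter $u$ through $\frac{1}{(l+1)!}\pa_u^{\ssp l}[\,\cdot\,]\big|_{u=0}$ assembles the summand of \eqref{eq:LLN-gen-intro}. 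Summing over cluster sizes $l=0,\dots,k$ yields the full formula.

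The step I expect to be hardest is the rigorous justification of this localization: one must show that contributions in which the $k$ derivatives are distributed across two or more distinct clusters, or involve coordinates that do not approach the diagonal, are of order $o(N^{k+1})$ and hence negligible, and one must control the confluent limit uniformly using only the assumed analyticity and uniform convergence of $\sqrt[N]{S_{\rho_N}}$ near $1^k$. Once the mean formula \eqref{eq:LLN-gen-intro} is established, convergence in probability follows from the vanishing-covariance estimate; and provided the limiting moments $\tmmathbf{\mu}_k$ satisfy a Carleman-type growth bound (which holds in our applications, where $\tmmathbf{\mu}$ is compactly supported), they determine a unique probability measure $\tmmathbf{\mu}$ on $\R$, completing the proof.
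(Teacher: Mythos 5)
Your outline follows the same route as the paper's proof of \Cref{th:LLN}: the same differential operators acting diagonally on Schur functions (\Cref{prop:eigen}), the same reduction to mean asymptotics plus a second-moment concentration estimate via the square of the operator, the same Leibniz expansion producing Vandermonde-type denominators, the same use of $S_{\rho_N}=(\sqrt[N]{S_{\rho_N}})^N$ to isolate the leading order $N^{k+1}$ (terms where every derivative landing on $S_{\rho_N}$ is a first derivative of the $N$-th root), and the same bookkeeping that assembles $\binom{k}{l}$, $(1+u)^k$, and $(\pa_1 F_{l+1})^{k-l}$.

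However, there is a genuine gap at precisely the step you defer as ``hardest,'' and your heuristic for it is not merely unproven but mischaracterizes the mechanism. The step in question is the evaluation of the diagonal limit of a cluster contribution of the form
\[
\sum_{i=1}^{m+1}\frac{G\bigl(u_i,u_1,\ldots,\widehat{u_i},\ldots,u_{m+1}\bigr)}{\prod_{j\neq i}(u_i-u_j)}\Bigg\vert_{u_1=\cdots=u_{m+1}=1},
\]
where $G$ is symmetric in all variables \emph{except the first} (the asymmetry coming from the distinguished derivative $\pa_1 F_{l+1}$). You assert that ``resolving the confluent Vandermonde structure forces the cluster coordinates to the $(l+1)$-th roots of unity.'' Nothing forces any direction of approach: by the symmetrization lemma of \cite{GorinBufetov2013free} (\Cref{lem:lemma1}), the diagonal limit exists and is path-independent. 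The paper's actual mechanism (\Cref{lem:main-sym}, the one genuinely new identity in this part of the argument) runs in the opposite direction: because the limit is path-independent, one may \emph{choose} the path $u_i=u\ssp w_{m+1}^{i-1}$, and this choice is tractable because multiplication by $w_{m+1}$ cyclically permutes the set $\{u\ssp w_{m+1}^{j}\}$, so the partially symmetric $G(u_i,u_1,\ldots,\widehat{u_i},\ldots,u_{m+1})$ collapses to a single univariate function $\widetilde F(u_i)=G(u_i,u_i w_{m+1},\ldots,u_i w_{m+1}^m)$ for \emph{every} $i$; only then does the classical fully-symmetric identity apply and yield $\frac{1}{m!}\ssp\pa_u^m\widetilde F\vert_{u=0}$. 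This is exactly the point where the non-factorized setting departs from \cite{GorinBufetov2013free}: since $G$ is not fully symmetric, the classical symmetrization lemma cannot be invoked directly, and without \Cref{lem:main-sym} (or an equivalent) your sketch cannot produce the specific evaluation points $1+u\ssp w_{l+1}^{\ssp j}$ in \eqref{eq:LLN-gen-intro}. The remaining ingredients of your outline — negligibility of multi-cluster terms and the concentration step — do match the paper's Steps 1--2 and carry over from \cite{GorinBufetov2013free} essentially unchanged, so the missing symmetrization identity is the substantive hole to fill.
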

\Cref{th:LLN-intro} provides a substantial generalization of Theorem 5.1 in \cite{GorinBufetov2013free}. We recover it in Corollary \ref{cor:sanity} below.

\medskip
Our second general result is on fluctuations of
the empirical measure $m [\rho_N]$ determined
by the Schur generating function.
For a probability measure $\rho_N$ on $\GT_N$, define
the $k$-th moment as
\begin{equation}
\label{eq:p_k-rho-intro}
	p_k^{(\rho_N)} \coloneqq \sum_{i=1}^N \left( \lambda_i +N-i
	\right)^k, \qquad \mbox{where $\lambda=(\lambda_1, \dots,
	\lambda_N)$ is $\rho_N$-distributed.}
\end{equation}
\begin{theorem}[Central Limit Theorem]
  \label{th:Gaussian-intro}
	Under the assumptions of
	\Cref{th:LLN-intro},
	the vector
	\begin{equation}
	\label{eq:scale-CLT-intro}
		\left( \frac{ p_k^{(\rho_N)} -
		\raisebox{-1pt}{$\operatorname{\mathbf{E}}$} [
		p_k^{(\rho_N)} ] \mathstrut}{N^{k+1/2 } } \right)_{k
		\ge 1}
	\end{equation}
	converges to a mean-zero Gaussian vector with covariance given by formula \eqref{eq:cov-general-statement} below.
\end{theorem}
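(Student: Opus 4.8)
The plan is to establish joint Gaussianity by the method of moments and cumulants. Writing $\ell_i\coloneqq\lambda_i+N-i$ and $p_k\coloneqq\sum_{i=1}^N\ell_i^{\,k}$, it suffices to show that the covariances of the normalized vector in \eqref{eq:scale-CLT-intro} converge to the claimed limit \eqref{eq:cov-general-statement}, while all joint cumulants of order $\ge3$ tend to $0$; since a mean-zero Gaussian vector is moment-determinate, the moment--cumulant relations then upgrade this to convergence of all joint moments, hence to the assertion. The heuristic driving every estimate is that the $r$-th joint cumulant of $(p_{k_1},\dots,p_{k_r})$ is $O\big(N^{(k_1+\dots+k_r)+1}\big)$, with a nonzero limiting leading coefficient when $r\le2$. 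Dividing by the normalization $N^{(k_1+1/2)+\dots+(k_r+1/2)}$ of \eqref{eq:scale-CLT-intro} leaves $O(N^{\,1-r/2})$, which is of constant order for $r=2$ and vanishes for $r\ge3$. The enhanced additive exponent $+1$ (in place of the $r$-dependent $2-r$ of the Gaussian-Free-Field regime treated in \cite{BufetovGorin2019}) is precisely what produces fluctuations on the larger scale $N^{k+1/2}$, and capturing it is the whole point of the theorem.

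First I would set up the differential-operator extraction of moments. Let $\Delta(x)\coloneqq\prod_{1\le i<j\le N}(x_i-x_j)$ and consider the operators $\mathcal D_k\coloneqq\Delta(x)^{-1}\big(\sum_{i=1}^N(x_i\pa_{x_i})^k\big)\Delta(x)$. Since $\Delta(x)\ssp s_\lambda(x)=\det\!\big(x_i^{\,\ell_j}\big)$ and $(x_i\pa_{x_i})^k x_i^{\ell_j}=\ell_j^{\,k}\ssp x_i^{\ell_j}$, each $\mathcal D_k$ is diagonal on Schur functions with eigenvalue $p_k=\sum_j\ell_j^{\,k}$, and the $\mathcal D_k$ mutually commute. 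Because $S_{\rho_N}(1^N)=1$, applying a product and evaluating at the fully-merged point yields the joint moments exactly:
\[
\E\Big[\textstyle\prod_{j=1}^r p_{k_j}^{(\rho_N)}\Big]=\big(\mathcal D_{k_1}\cdots\mathcal D_{k_r}\,S_{\rho_N}\big)(x)\big|_{x=1^N}.
\]
Writing $G_N\coloneqq\log S_{\rho_N}$, so that $S_{\rho_N}=e^{G_N}$ and $G_N(1^N)=0$, and expanding $\mathcal D_{k_1}\cdots\mathcal D_{k_r}e^{G_N}$ by the Leibniz rule, the expression organizes into a sum over set partitions of $\{1,\dots,r\}$ whose blocks carry ``connected'' operator-derivatives of $G_N$. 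Matching this against the moment--cumulant relation identifies the joint cumulant $\operatorname{Cum}\big(p_{k_1}^{(\rho_N)},\dots,p_{k_r}^{(\rho_N)}\big)$ with the fully connected term, evaluated at $x=1^N$; so all cumulants are read off from derivatives of $\log S_{\rho_N}$ at $1^N$.

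Next I would insert the hypothesis \eqref{assumptioN1-intro}. The uniform convergence $\tfrac1N G_N(u_1,\dots,u_k,1^{N-k})\to\log F_k(u_1,\dots,u_k)$ together with analyticity and Cauchy's estimates controls all partial derivatives of $\tfrac1N G_N$ at $1^k$ by the corresponding derivatives of $\log F_k$, uniformly in $N$. The covariance of $p_{k_1}$ and $p_{k_2}$ comes from the connected action of $\mathcal D_{k_1}\mathcal D_{k_2}$ on $G_N$, whose leading asymptotics are governed by the mixed second derivatives of $\log F_2$ (together with first-order data from $F_1$); assembling the combinatorial coefficients produces \eqref{eq:cov-general-statement}. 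The decisive point is that the non-separable contribution $\pa_{u_1}\pa_{u_2}\log F_2\big|_{(1,1)}$ --- which vanishes identically in the product case $F_2(u_1,u_2)=\Phi(u_1)\Phi(u_2)$ underlying the classical results --- is generically nonzero here, and it is exactly this term that supplies the extra factor of $N$, hence the Brownian-scale fluctuations. For the higher cumulants one shows that the connected action of $r\ge3$ operators is $O\big(N^{(k_1+\dots+k_r)+1}\big)$, so that after normalization it is $O(N^{1-r/2})\to0$; the consistency relations $F_k(u_1,\dots,u_{k-1},1)=F_{k-1}(u_1,\dots,u_{k-1})$ are used repeatedly to match the number of distinguished variables to the number of operators.

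The main obstacle is the precise asymptotic analysis of the operators $\mathcal D_k$ at the fully-coincident point $x=1^N$. The eigenvalue identity $\mathcal D_k s_\lambda=p_k\ssp s_\lambda$ is exact and regular, but to bring in the hypothesis one must instead apply $\mathcal D_k$ to the generic analytic function $e^{G_N}$ and let $x\to1^N$, where the prefactor $\Delta(x)^{-1}$ is singular; extracting the finite regularized action and its correct power of $N$ requires careful combinatorial bookkeeping of competing singular and vanishing contributions. This is the step that goes beyond \cite{GorinBufetov2013free,BufetovGorin2019}, and the genuinely new difficulty is twofold: (i) to isolate the enhanced leading term produced by the non-factorized functions $F_k$, which furnishes the dominant order-$N^{k+1/2}$ fluctuations that were subdominant or absent in the product setting; and (ii) to prove simultaneously that this enhancement does not propagate into the $r\ge3$ cumulants beyond order $N^{(k_1+\dots+k_r)+1}$, so that Gaussianity survives. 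Tracking how the $N$-dependence of the operators interacts with that of $\log S_{\rho_N}$, uniformly over a complex neighborhood of $1^k$, is where the bulk of the technical work lies.
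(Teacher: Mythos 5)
Your plan follows the same overall strategy as the paper's proof of \Cref{th:CLT-gen}: the same operators $\mathcal{D}_k$ diagonal on Schur functions, extraction of joint moments by evaluating at $1^N$, and a Wick-type argument showing that pair contributions dominate at the scale $N^{\sum k_i + r/2}$ while everything else is subleading. However, there is a genuine gap: the two estimates on which everything rests are asserted rather than proved, and they are precisely the content of the paper's \Cref{sec:CLT-gen}. First, your identification ``joint cumulant $=$ fully connected term in the Leibniz expansion of $\mathcal{D}_{k_1}\cdots \mathcal{D}_{k_r}e^{G_N}$'' is not a formal consequence of the moment--cumulant relation here, because the operators are conjugated by the Vandermonde: the singular factors $(x_i-x_j)^{-1}$ couple summation indices \emph{across} different operators, so ``connectedness'' must be defined and controlled through exactly the kind of bookkeeping you defer. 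The paper does this via the $N$-degree formalism (\Cref{def:degree-N}), the functions $\mathcal{F}_{(l)}$ and $\GG_{(l_1,l_2)}$ in \eqref{eq:def-Fl}, \eqref{eq:def-GGfunc}, the two-operator decomposition (\Cref{lem:two-diff-terms}), and the induction on the number of operators (\Cref{lem:gauss-many-Schur}, \Cref{lem:covar-general}), which together establish that centered joint moments reduce to sums over pairings of $\GG$'s plus terms of $N$-degree less than $\sum_i l_i + s/2$. Note also that the paper only needs (and only proves) the bound $o(N^{\sum k_i + r/2})$ for the non-pairing remainder; your stronger claim that all $r\ge 3$ cumulants are $O(N^{\sum k_i+1})$ is motivated by the i.i.d.\ environment example but is neither proved nor needed.

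Second, even granting Gaussianity, your route to the covariance formula is incomplete in two respects. You work with $G_N=\log S_{\rho_N}$, whereas the hypothesis and the answer \eqref{eq:cov-general-statement} are phrased through the $N$-th root limits $F_k$; as the paper remarks (after \Cref{th:BG2-one-level}), differentiating the logarithm and differentiating the $N$-th root give genuinely different expressions, so a conversion step is required and is not sketched. More importantly, passing from the symmetrized singular sums at $u_1=\dots=u_N=1$ to a closed derivative formula requires the new symmetrization identity with roots of unity (\Cref{lem:main-sym}), applied twice in the paper's covariance computation; nothing in your proposal supplies a tool of this kind, and without it the ``assembling the combinatorial coefficients produces \eqref{eq:cov-general-statement}'' step cannot be carried out. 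Finally, a small inaccuracy: the limiting covariance need not be nondegenerate for $r=2$ --- the paper explicitly allows the limit Gaussian vector to vanish identically (e.g.\ in the regime of \cite{bufetov2016fluctuations}), so the claim of a nonzero leading coefficient for $r\le 2$ should be dropped.
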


\begin{remark}
\label{rmk:Gaussian-intro-normalization-discussion}
The principal distinction between our setting and those in
\cite{bufetov2016fluctuations}, \cite{BufetovGorin2019}
lies in the fluctuation scale.
Namely, these earlier works considered unnormalized fluctuations of the (integer-valued)
height function $N\left( F_{m[\rho_N]}(y) - \raisebox{-1pt}{$\operatorname{\mathbf{E}}$} [F_{m[\rho_N]}(y)] \right)$,
where $y$ belongs to a compact interval. Then we have
\begin{equation}
\label{eq:unscaled-moments-intro}
\frac{p_k^{(\rho_N)}- \raisebox{-1pt}{$\operatorname{\mathbf{E}}$} [p_k^{(\rho_N)}]}{N^k}=
\int_{-\infty}^{\infty} y^k \ssp d\left[
N\left( F_{m[\rho_N]}(y) - \raisebox{-1pt}{$\operatorname{\mathbf{E}}$} [F_{m[\rho_N]}(y)] \right)
\right],
\end{equation}
and in \cite{bufetov2016fluctuations}, these moments are shown to
have jointly Gaussian asymptotics
(we recall one of these results in \Cref{th:BG2-one-level}
below).

In contrast, for random domino tilings of the Aztec diamond with random edge weights
with a fixed distribution $\BB$, the denominators in \eqref{eq:unscaled-moments-intro} must be replaced by
$N^{k+1/2}$. This means that the fluctuations of the height function are now growing on the scale $\sqrt{N}$.
This new
scaling alters the behavior of the Schur generating
functions, a feature captured by \Cref{th:Gaussian-intro}.
\end{remark}

Our proofs of \Cref{th:LLN-intro,th:Gaussian-intro} largely
follow the approach of \cite{GorinBufetov2013free},
\cite{bufetov2016fluctuations}, \cite{BufetovGorin2019};
for this reason we keep the proofs of our theorems
relatively brief,
focusing on the modifications necessitated by the new setting.
The
moment computation for \Cref{th:LLN-intro} required several new
ideas, most notably the use of the complex domain. The argument
for \Cref{th:Gaussian-intro} is closer to that in
\cite{bufetov2016fluctuations} and, in some respects, even
simpler. Nevertheless, the new fluctuation scale and the
significantly broader assumptions introduced additional
challenges that we had to overcome.

\subsection*{Acknowledgments}
We are grateful to Alexei Borodin, Vadim Gorin, Kurt Johansson, and Sasha Sodin for helpful comments.
We used the domino shuffling code by Sunil Chhita (ported to JavaScript \cite{petrov2025random_edge_Aztec_simulation}
by the second author) in order to produce the domino tiling samples of large Aztec diamonds.
A.~Bufetov and P.~Zografos were partially supported by the European Research Council (ERC), Grant Agreement No. 101041499.
L.~Petrov was partially supported by the NSF grant DMS-2153869 and by the Simons Collaboration
Grant for Mathematicians 709055.

\section{Preliminaries}

\subsection{Signatures and Schur functions}

An $N$-tuple of non-increasing integers $\lambda = (\lambda_1\ge \lambda_2 \ge \dots \ge \lambda_N)$
is called a \textit{signature} of length $N$.
We denote by $\GT_N$ the set of all
signatures of length $N$.
Let also $|\lambda|\coloneqq\sum_{i=1}^N \lambda_i$.
Signatures $\lambda \in \GT_N$ and $\mu \in \GT_{N-1}$
\textit{interlace} (notation $\mu \prec \lambda$), if
$\lambda_i \ge \mu_i \ge \lambda_{i+1}$, for all $i=1,
\dots, N-1$. Signatures $\nu \in \GT_N$ and $\lambda \in
\GT_{N}$ \textit{interlace vertically} (notation $\lambda
\prec_v \nu$), if $\nu_i - \lambda_i \in \{0,1 \}$ for all
$i=1, \dots, N$.

The \textit{Schur function} $s_{\lambda}$, $\lambda\in\GT_N$, is a
symmetric Laurent polynomial
in $x_1,\ldots,x_N $ of degree $|\lambda|$
defined by
\begin{equation}
\label{eq:Schur-function-def}
s_\lambda (x_1, \dots ,x_N)\coloneqq
\frac{\det \left[x_i^{ \lambda_j + N -j}\right] }{ \det \left[x_i^{N-j}\right]}
=
\frac{\det\left[x_i^{\lambda_j+N-j}\right]}{\prod \limits_{i<j}
 (x_i-x_j)}.
\end{equation}
Schur functions $s_\lambda$ form a linear basis in the space of symmetric Laurent polynomials
in $x_1,\ldots,x_N$, where $\lambda$ runs over $\GT_N$.

\subsection{Domino tilings and sequences of signatures}
\label{subsec:comb}

For each $t=1,2,\dots, M$, let $\lambda^{(t)}$,
$\upsilon^{(t)}$ be signatures of length $t$.
\begin{definition}
\label{def:beta_vertical_transitions}
	Let the coefficients $\kappa_{\beta} (\lambda^{(t)} \to
	\upsilon^{(t)})$ be defined via the following expansion
	into the linear basis of Schur functions:
	\begin{equation}
	\label{eq:Schur-branching-kappa-coef}
		\frac{s_{\lambda^{(t)}} (x_1, \dots, x_t)}{s_{\lambda^{(t)}} (1^t)}
		\ssp \prod_{i=1}^t
		(1-\beta + \beta x_i)
		=
		\sum_{\upsilon^{(t)} \in \GT_t} \kappa_{\beta} (\lambda^{(t)} \to
		\upsilon^{(t)}) \ssp \frac{s_{\upsilon^{(t)}} (x_1, \dots, x_t)}{s_{\upsilon^{(t)}}
		(1^t)}.
	\end{equation}
	It follows from the Pieri rule (see, e.g., \cite[Lemma 2.12]{BufetovKnizel2018}) that
	these coefficients have the explicit form
	\begin{equation} \label{st}
	 \kappa_{\beta} (\lambda^{(t)} \to \upsilon^{(t)}) = \begin{cases} \beta^{|\upsilon^{(t)}| - |\lambda^{(t)}| } (1-\beta)^{t-(|\upsilon^{(t)}| - |\lambda^{(t)}| ) } \ssp
	 \frac{s_{\upsilon^{(t)}}(1^t)}{s_{\lambda^{(t)}}(1^t)}, & \lambda^{(t)} \prec_v
		 \upsilon^{(t)} ; \\
	 0, & {otherwise},\end{cases}
	\end{equation}
	By setting $x_i\equiv 1$ in \eqref{eq:Schur-branching-kappa-coef},
	we see that $\kappa_{\beta} (\lambda^{(t)} \to \upsilon^{(t)})$
	sum to one over all $\upsilon^{(t)} \in \GT_t$.
\end{definition}

%Also define the coefficients $\pr_{t \to t-1} (\upsilon^{(t)} \to \lambda^{(t-1)})$

\begin{definition}
\label{def:pr_horizontal_transitions}
Let the coefficients $\mathrm{pr}\bigl(\upsilon^{(t)}\to\lambda^{(t-1)}\bigr)$ be defined through the following expansion into the linear basis of Schur functions:
\begin{equation}
\label{eq:Schur-branching-pr-coef}
\frac{s_{\upsilon^{(t)}} (x_1, \dots, x_{t-1}, 1)}{s_{\upsilon^{(t)}} (1^{t})}
=
\sum_{\lambda^{(t-1)} \in \GT_{t-1}}
\mathrm{pr}\bigl(\upsilon^{(t)} \to \lambda^{(t-1)}\bigr) \ssp
\frac{s_{\lambda^{(t-1)}} (x_1, \dots, x_{t-1})}{s_{\lambda^{(t-1)}} (1^{t-1})}.
\end{equation}
It follows from the branching rule for Schur functions that these coefficients have the explicit form
\begin{equation} \label{pr}
\mathrm{pr}\bigl(\upsilon^{(t)}\to\lambda^{(t-1)}\bigr)=
\begin{cases}
\dfrac{s_{\lambda^{(t-1)}}(1^{t-1})}{s_{\upsilon^{(t)}}(1^{t})}, & \lambda^{(t-1)}\prec \upsilon^{(t)},\\[6pt]
0, & \text{otherwise}.
\end{cases}
\end{equation}
By setting $x_i\equiv1$ in \eqref{eq:Schur-branching-pr-coef}, we see that the coefficients $\mathrm{pr}\bigl(\upsilon^{(t)}\to\lambda^{(t-1)}\bigr)$ sum to one over all $\lambda^{(t-1)}\in\GT_{t-1}$.
\end{definition}

\begin{definition}
\label{def:Aztec-signatures-measure}
	Given a sequence of parameters $(\beta_1, \dots,
	\beta_M)$, $0 < \beta_i <1$, define the probability
	measure $\mathbf{P}_{\beta_1, \dots, \beta_M}$ on the sequence
	of signatures of the form
	\begin{equation}
	\label{eq:Aztec-signatures-sequence-def}
		(\lambda^{(M)},
		\upsilon^{(M)}, \lambda^{(M-1)}, \upsilon^{(M-1)}, \dots, \lambda^{(2)}, \upsilon^{(2)},
		\lambda^{(1)}, \upsilon^{(1)})
	\end{equation}
	(we also set $\lambda^{(0)} = \varnothing$) by the formula
	\begin{multline}
		\label{eq:meas-Aztec}
\mathbf{P}_{\beta_1, \dots, \beta_M}\left(\lambda^{(M)}, \upsilon^{(M)}, \lambda^{(M-1)}, \upsilon^{(M-1)}, \dots, \lambda^{(2)}, \upsilon^{(2)}, \lambda^{(1)}, \upsilon^{(1)}\right)
		\\
		\coloneqq 1_{\lambda^{(M)} =
		(0^M) } \prod_{i=1}^{M} \kappa_{\beta_i} ( \lambda^{(i)} \to \upsilon^{(i)})
		\ssp
		\pr_{i \to (i-1)}(\upsilon^{(i)} \to \lambda^{(i-1)}).
	\end{multline}
	By induction, these weights sum to one over all sequences of signatures
	\eqref{eq:Aztec-signatures-sequence-def}.
\end{definition}
Let $\mathbb S_M$ be the set of
sequences \eqref{eq:Aztec-signatures-sequence-def} with nonzero
probability measure $\mathbf{P}_{\beta_1, \dots, \beta_M}$.
From explicit formulas \eqref{st} and \eqref{pr} it follows that each configuration from $\mathbb S_M$ has probability
\begin{equation*}
	\mathbf{P}_{\beta_1,\dots,\beta_M}\bigl(\lambda^{(M)},\upsilon^{(M)},\dots,\upsilon^{(2)},\lambda^{(1)}\bigr)
	=\prod_{i=1}^M(1-\beta_i)^{i}\prod_{i=1}^M\Bigl(\frac{\beta_i}{1-\beta_i}\Bigr)^{|\upsilon^{(i)}|-|\lambda^{(i)}|}.
\end{equation*}

The
measure
from \Cref{def:Aztec-signatures-measure}
corresponds to domino tilings of the Aztec diamond with one-periodic weights:
\begin{proposition}
	\label{prop:part-aztec-biject}
	There is a bijection
	between $\mathbb S_M$ and the set of domino tilings of the
	Aztec diamond of size $M$. Under this bijection,
	the measure
	$\mathbf{P}_{\beta_1, \dots, \beta_M}$
	\eqref{eq:meas-Aztec} turns into the measure
	on domino tilings with (fixed, deterministic) edge weights $W_i$
	as in \Cref{fig:AztecWeights-intro}, left,
	where $W_i=\beta_i/(1-\beta_i)$, $i=1,\ldots,M.$
\end{proposition}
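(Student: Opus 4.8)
The plan is to exhibit the bijection concretely through the standard diagonal-slice (equivalently, nonintersecting-path) encoding of Aztec diamond tilings, and then to match the two measures weight by weight. The statement is an instance of the by-now classical correspondence between dimer coverings of the Aztec diamond and sequences of interlacing particle configurations; the two alternating families of transition coefficients $\kappa_{\beta_i}$ and $\mathrm{pr}$ are designed precisely to encode the two kinds of elementary moves encountered while sweeping across the diamond. I will follow the construction used in \cite{BufetovKnizel2018}, specialized to the present $\beta$-weighting.

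First I would fix a coordinate system on the Aztec diamond graph and recall that a domino tiling is equivalent to a family of nonintersecting lattice paths, or, what amounts to the same, to the level lines of the height function of \Cref{fig:height-function}. Slicing the diamond along successive diagonals and recording the particle positions $\lambda_i+(\text{length})-i$ on each diagonal produces a sequence of signatures of exactly the shape \eqref{eq:Aztec-signatures-sequence-def}: the flat outer boundary forces the outermost configuration to be $\lambda^{(M)}=0^M$, and the apex forces $\lambda^{(0)}=\varnothing$. The diagonals come in two interleaved types. Crossing one type preserves the number of particles and moves each particle rightward by $0$ or $1$, which is exactly the vertical interlacing $\lambda^{(i)}\prec_v\upsilon^{(i)}$ of \Cref{def:beta_vertical_transitions}; crossing the next type deletes one particle and imposes the ordinary interlacing $\lambda^{(i-1)}\prec\upsilon^{(i)}$ of \Cref{def:pr_horizontal_transitions}. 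A legal tiling corresponds exactly to a sequence obeying all of these interlacing constraints, i.e.\ to an element of $\mathbb{S}_M$, and conversely each such sequence reconstructs a unique tiling. This yields the bijection.

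It then remains to match the weights. Under the encoding, $|\upsilon^{(i)}|-|\lambda^{(i)}|$ counts the particles that move (shift by $1$ rather than $0$) when the $i$-th vertical diagonal is crossed, and each such move corresponds to exactly one domino carrying the nontrivial weight $W_i$ (a ``red'' domino in \Cref{fig:AztecWeights-intro}). Consequently the configuration-dependent part of the weight in \Cref{def:Aztec-signatures-measure}, namely
\[
	\prod_{i=1}^M\Bigl(\tfrac{\beta_i}{1-\beta_i}\Bigr)^{|\upsilon^{(i)}|-|\lambda^{(i)}|}=\prod_{i=1}^M W_i^{\,n_i},
\]
with $n_i$ the number of $W_i$-dominos, reproduces the Boltzmann weight $\prod_{e\in D}\operatorname{weight}(e)$. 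The remaining factor $\prod_{i=1}^M(1-\beta_i)^{i}$ is independent of the tiling, so it plays the role of $1/Z$; since $\mathbf{P}_{\beta_1,\dots,\beta_M}$ is already a probability measure, summing over $\mathbb{S}_M$ forces $\prod_{i=1}^M(1-\beta_i)^{i}=Z^{-1}$, which identifies $\mathbf{P}_{\beta_1,\dots,\beta_M}$ with the Boltzmann measure.

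The main obstacle will be the bookkeeping in the weight-matching step: with a fixed convention for the checkerboard parity and the orientation of the diamond, one must verify that the dominos assigned the weight $W_i$ are exactly those recording a unit shift on the $i$-th slice, so that $n_i=|\upsilon^{(i)}|-|\lambda^{(i)}|$ exactly, rather than up to an index shift or a parity-dependent correction. This is a finite but delicate verification; I would first pin down the correspondence between moving particles and weighted dominos on the $M=4$ example of \Cref{fig:AztecWeights-intro} (where the displayed configuration has weight $W_1^3W_2^2$), and then propagate it to general $M$ using the translation invariance of the local rule along each diagonal.
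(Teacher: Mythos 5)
Your proposal is correct and takes essentially the same route as the paper: the paper's proof simply cites the classical correspondence (\cite{johansson2002non}, \cite{johansson2006eigenvalues}, and \cite[Section~2]{BufetovKnizel2018}) together with \Cref{fig:AztecBijection}, and your diagonal-slice construction with the weight matching $n_i=|\upsilon^{(i)}|-|\lambda^{(i)}|$ and the identification of $\prod_{i=1}^M(1-\beta_i)^i$ with $Z^{-1}$ is precisely the content of that cited construction. The only difference is that you spell out the details the paper delegates to the references, which is a fine (and arguably more self-contained) way to present it.
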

\begin{proof}
This correspondence is classical; see, for example,
\cite{johansson2002non}, \cite{johansson2006eigenvalues}. A
detailed presentation in our notation is given in
\cite[Section~2]{BufetovKnizel2018}, where the same
construction is applied to domino tilings of more general
domains. \Cref{fig:AztecBijection} illustrates the
bijection for the Aztec diamond.
\end{proof}

\begin{figure}[htpb]
  \centering
	\includegraphics[width=0.8\textwidth]{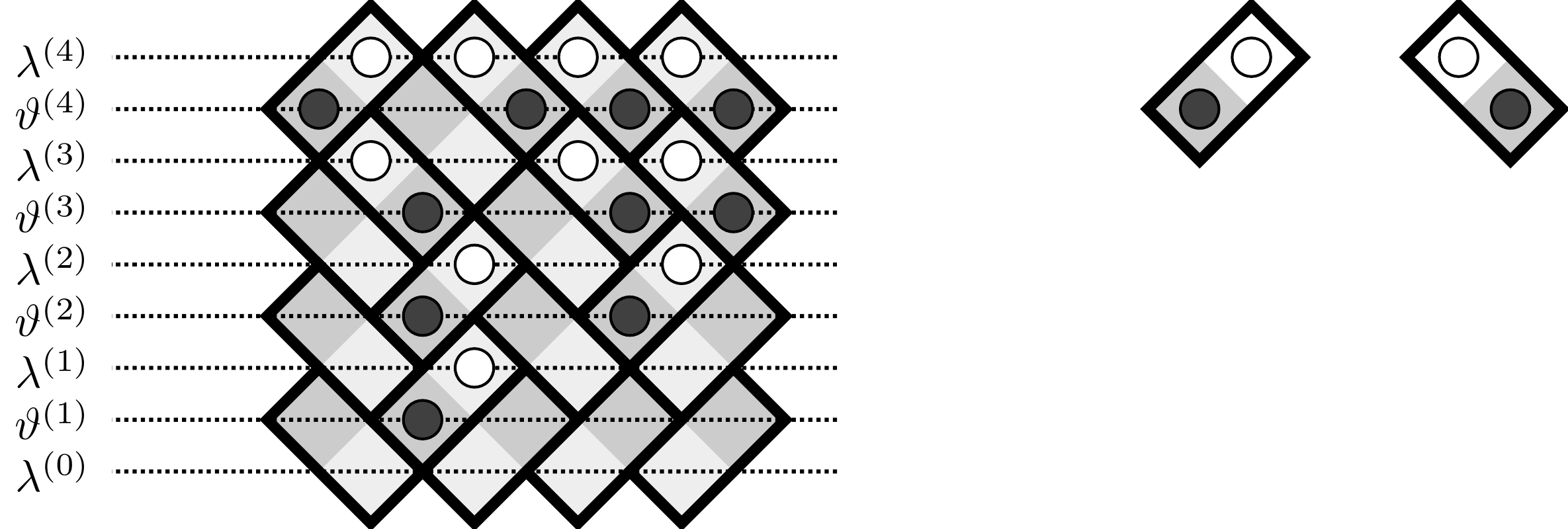}
	\caption{Correspondence between domino tilings of the
	Aztec diamond and sequences of signatures (here, $M=4$).
	We single out two types of dominos (red and green in the coloring in
	\Cref{fig:AztecWeights-intro}), and place black and white particles
	into them. The black (resp., white) particle configurations on each
	horizontal slice correspond to the signatures $\upsilon^{(i)}$
	(resp., $\lambda^{(i)}$). To read a signature, one
	counts the number of unoccupied positions
	to the left of each particle. In particular, we have
	$\lambda^{(4)}=(0,0,0,0)$,
	$\upsilon^{(4)}=(1,1,1,0)$,
	$\lambda^{(3)}=(1,1,0)$,
	$\upsilon^{(3)}=(2,2,1)$, and so on, until
	$\upsilon^{(1)}=(1)$ and $\lambda^{(0)}=(0)$.
	Note that $\sum_{i=1}^M
	|\upsilon^{(i)}| - |\lambda^{(i)}|$ is the number of
	NW-SE dominos in the corresponding row which contain particles.
	We assign
	nontrivial Boltzmann weights to these dominos.}
  \label{fig:AztecBijection}
\end{figure}

\Cref{prop:part-aztec-biject} allows to
translate results
about sequences of signatures
\eqref{eq:Aztec-signatures-sequence-def}
into the geometric
language of domino tilings of the Aztec diamond.
Throughout the paper, we mostly formulate and prove the results in the language of arrays and signatures,
as they are directly related to Schur generating functions described in the next
\Cref{subsec:SGF}.

\subsection{Schur generating functions}
\label{subsec:SGF}

Let $\rho_N$ be a probability measure on $\GT_N$.
A \emph{Schur generating function}
$S_{\rho_N} (x_1,\dots,x_N)$ is a symmetric Laurent (formal)
power series in $x_1,\dots,x_N$ defined by
\begin{equation*}
 S_{\rho_N} (x_1,\dots,x_N) \coloneqq \sum_{\lambda \in \GT_N} \rho_N (\lambda)
 \ssp
 \frac{s_\lambda(x_1,\dots,x_N)}{s_\lambda(1^N)}.
\end{equation*}
In what follows, we always assume that the measure $\rho_N$ is such that this
formal series is well-defined, i.e., the sum converges
in an open complex
neighborhood of $(1^N)$.

\begin{proposition}
\label{prop:SGF-comb}

Let $\mathbf{P}_{\beta_1, \dots, \beta_M}$ be a probability measure on sequences of signatures
\eqref{eq:Aztec-signatures-sequence-def} as in \Cref{def:Aztec-signatures-measure}.
Let $\rh_N$ be a projection of this measure to the signature $\lambda^{(N)}$, where $1 \le N \le M$. Then
this measuse admits the following Schur generating function:
\begin{equation*}
S_{\rh_N} (x_1,\dots,x_N) = \prod_{i=1}^N \prod_{j=N+1}^M \bigl(1 - \beta_j + x_i \ssp \beta_j\bigr).
\end{equation*}
\end{proposition}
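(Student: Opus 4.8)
The plan is to track how the Markov dynamics defining $\mathbf{P}_{\beta_1,\dots,\beta_M}$ transforms Schur generating functions, since both transition kernels act on them transparently. First I would record two transformation rules that follow directly from the defining expansions \eqref{eq:Schur-branching-kappa-coef} and \eqref{eq:Schur-branching-pr-coef}. If a measure $\sigma$ on $\GT_t$ has Schur generating function $S_\sigma(x_1,\dots,x_t)$, then its pushforward under the vertical kernel $\kappa_{\beta}$ has Schur generating function
$$
S_\sigma(x_1,\dots,x_t)\ssp\prod_{i=1}^{t}\bigl(1-\beta+\beta x_i\bigr),
$$
obtained by substituting \eqref{eq:Schur-branching-kappa-coef} into the definition of $S$ and interchanging the two finite sums. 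Similarly, the pushforward of $\sigma$ under the horizontal kernel $\pr$ is a measure on $\GT_{t-1}$ with Schur generating function equal to the specialization $S_\sigma(x_1,\dots,x_{t-1},1)$, by the same manipulation applied to \eqref{eq:Schur-branching-pr-coef}. Here all interchanges are legitimate because $\rh_N$, being the marginal of a measure supported on finitely many sequences \eqref{eq:Aztec-signatures-sequence-def}, has a Schur generating function that is an honest Laurent polynomial.

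With these rules, I would prove by downward induction on $t=M,M-1,\dots,N$ that the marginal law of $\lambda^{(t)}$ has Schur generating function
$$
S_{\lambda^{(t)}}(x_1,\dots,x_t)=\prod_{i=1}^{t}\prod_{j=t+1}^{M}\bigl(1-\beta_j+\beta_j x_i\bigr).
$$
The base case $t=M$ holds because $\lambda^{(M)}=(0^M)$ is deterministic with $s_{(0^M)}\equiv1$, so $S_{\lambda^{(M)}}\equiv1$, matching the empty product over $j$. For the inductive step, the vertical transition $\lambda^{(t)}\to\upsilon^{(t)}$ multiplies the Schur generating function by $\prod_{i=1}^{t}(1-\beta_t+\beta_t x_i)$, extending the inner product to range over $j=t,t+1,\dots,M$, and the horizontal transition $\upsilon^{(t)}\to\lambda^{(t-1)}$ then specializes $x_t=1$.

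The one point that requires care is the telescoping at this last specialization. Setting $x_t=1$ collapses the $i=t$ factor into $\prod_{j=t}^{M}(1-\beta_j+\beta_j)=\prod_{j=t}^{M}1=1$, so that factor disappears entirely and introduces no new $\beta$-dependence; what survives is exactly $\prod_{i=1}^{t-1}\prod_{j=t}^{M}(1-\beta_j+\beta_j x_i)$, which is the invariant at level $t-1$. Iterating down to $t=N$ yields $S_{\rh_N}(x_1,\dots,x_N)=\prod_{i=1}^{N}\prod_{j=N+1}^{M}(1-\beta_j+\beta_j x_i)$, as claimed. I do not expect a genuine obstacle: the substance of the argument is the identity $1-\beta_j+\beta_j=1$, and everything else is linear bookkeeping organized by the two transformation rules above.
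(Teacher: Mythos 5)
Your proof is correct, but it takes a genuinely different route from the paper's. The paper sums the product formula \eqref{eq:meas-Aztec} over all signatures other than $\lambda^{(N)}$ and invokes the usual and dual Cauchy identities to identify the marginal $\rh_N$ explicitly as a Schur measure, $\rh_N(\lambda)=\bigl(\prod_{i=N+1}^{M}(1-\beta_i)^{N}\bigr)\ssp s_{\lambda}(1^N)\ssp s_{\lambda'}\bigl(\tfrac{\beta_M}{1-\beta_M},\dots,\tfrac{\beta_{N+1}}{1-\beta_{N+1}}\bigr)$, and then applies the Cauchy identity once more to obtain the Schur generating function. You instead work entirely at the level of Schur generating functions: the two kernels act on them by multiplication by $\prod_{i}(1-\beta+\beta x_i)$ and by specialization of one variable to $1$, respectively --- which is immediate from the defining expansions \eqref{eq:Schur-branching-kappa-coef} and \eqref{eq:Schur-branching-pr-coef} --- and a downward induction starting from $S_{\lambda^{(M)}}\equiv 1$ telescopes via the identity $1-\beta_j+\beta_j=1$. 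Your argument is more elementary: it needs no Cauchy identities and no closed formula for the marginal, only finiteness of the support (to justify exchanging sums, as you note) and the stochasticity of the kernels, which reduces the marginal of $\lambda^{(t)}$ to a pushforward along the chain; you use this last point implicitly, but it follows at once from the product form of \eqref{eq:meas-Aztec} since the downstream kernels sum to one. What the paper's computation buys in exchange is the intermediate formula \eqref{eq:Aztec-Schur-measure-of-lambda_N}, identifying $\lambda^{(N)}$ as a Schur measure in the sense of Okounkov --- a fact the paper records as being of independent interest; your induction proves the proposition as stated but does not yield that identification.
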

\begin{proof}
This is a standard computation with Schur processes, a class of measures on sequences of signatures
introduced in \cite{okounkov2003correlation} (see also \cite{Borodin2010Schur}, \cite{Betea_etal2014}).
Namely, by
summing
\eqref{eq:meas-Aztec}
over all signatures except for $\lambda^{(N)}$ and using
usual and dual Cauchy identities, we obtain
\begin{equation}
\label{eq:Aztec-Schur-measure-of-lambda_N}
\rh_N \left( \lambda^{(N)} = \lambda \right) =
\Biggl(\prod_{i=N+1}^M \left( 1-\beta_i \right)^{N}\Biggr)\cdot
s_{\lambda} (1^N) \ssp
s_{\lambda'} \left( \frac{\beta_M}{1-\beta_M}, \frac{\beta_{M-1}}{1-\beta_{M-1}}, \dots, \frac{\beta_{N+1}}{1-\beta_{N+1}} \right).
\end{equation}
In particular, the random signature $\lambda^{(N)}$ is distributed according to the Schur measure
\cite{okounkov2001infinite}.
Applying the Cauchy identity once again to \eqref{eq:Aztec-Schur-measure-of-lambda_N},
one obtains the desired Schur generating function.
\end{proof}

%MULTILEVEL SCHUR generating functions --need to be added, perhaps briefly. RROBABLY NOT?

\subsection{Asymptotics via Schur generating functions. Known results}
\label{subsec:Schur-generating-functions-known-results}

Here, we summarize the main results of
\cite{GorinBufetov2013free} and
\cite{bufetov2016fluctuations} that describe the asymptotic
behavior of random particle configurations on $\mathbb{Z}$
via Schur generating functions.

Let $\rho_N$ be a sequence of probability measures on $\GT_N$,
$N=1,2,\ldots$.
The main object of interest for us is the
asymptotic behavior of the following \textit{random
measure}:
\begin{equation}
	\label{eq:random-measure-m-rho-definition}
	m [ \rho_N ] \coloneqq \frac{1}{N} \sum_{i = 1}^N \delta \left(
	\frac{\lambda_i + N - i}{N} \right), \qquad \mbox{where $\lambda=(\lambda_1, \dots, \lambda_N)$ is $\rho_N$-distributed.}
\end{equation}
We study it via its moments:
\begin{equation}
	\label{eq:p_k-rho-moments-in-text}
	p_k^{(\rho_N)} \coloneqq \sum_{i=1}^N \left( \lambda_i +N-i
	\right)^k, \qquad \mbox{where $\lambda=(\lambda_1, \dots,
	\lambda_N)$ is $\rho_N$-distributed.}
\end{equation}

\begin{theorem}[Law of Large Numbers; Theorem 5.1 of \cite{GorinBufetov2013free}]
\label{theorem_moment_convergence}
 Suppose that the sequence $\rho_N$ is such that for every $k$ one has
\begin{equation}
\label{eq:assumptioN_BG_on_LLN}
 \lim_{N\to\infty} \frac{1}{N} \log \left( S_{\rho_N} (x_1,\dots,x_k, 1^{N-k}) \right) = \mathsf{F}(x_1)+\dots+ \mathsf{F}(x_k),
\end{equation}
where $\mathsf{F}$ is an analytic function in a complex
neighborhood of $1$,
and the convergence is uniform in an
open complex neighborhood of $1^k$.
Then the random measures $m [\rho_N]$ converge as
$N\to\infty$ in probability, in the sense of moments to a \emph{deterministic} measure $\mes$ on
$\mathbb R$, whose moments are given  by the following integral formula:
\begin{equation}
\label{eq_limit_moments_A}
 \int_{\mathbb R} x^k \ssp\mes(dx)= \frac{1}{2 \pi \ii (k+1)} \oint_{|z|= \varepsilon} \frac{dz}{1+z} \left( \frac{1}{z} +1 + (1+z) \ssp
 \mathsf{F} (1+z) \right)^{k+1}.
\end{equation}
\end{theorem}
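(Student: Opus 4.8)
The plan is to combine the moment method with an exact extraction of the moments $p_k^{(\rho_N)}$ by symmetric differential operators diagonalized by Schur functions. First observe that the $k$-th moment of the random measure is
\[
\int_{\R} x^k\, m[\rho_N](dx)=\frac{1}{N^{k+1}}\sum_{i=1}^N(\lambda_i+N-i)^k=\frac{p_k^{(\rho_N)}}{N^{k+1}}.
\]
Since the limit $\mes$ is deterministic, to prove convergence in probability in the sense of moments it suffices to show, for each fixed $k$, that $N^{-(k+1)}\E\bigl[p_k^{(\rho_N)}\bigr]\to\int x^k\,\mes(dx)$ and $N^{-2(k+1)}\operatorname{Var}\bigl(p_k^{(\rho_N)}\bigr)\to0$; the second bound yields convergence in probability by Chebyshev's inequality. (The limiting $\mes$ will be compactly supported, hence determined by its moments.) Everything thus reduces to the asymptotics of $\E[p_k^{(\rho_N)}]$ and a bound on its variance.

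Next I would extract these quantities algebraically. Write $\ell_j(\lambda)\coloneqq\lambda_j+N-j$ and $V(x)\coloneqq\prod_{i<j}(x_i-x_j)$, so that by \eqref{eq:Schur-function-def} one has $s_\lambda=\det\bigl[x_i^{\ell_j(\lambda)}\bigr]/V$. Since $\sum_i(x_i\pa_{x_i})^k$ is permutation-invariant and sends the antisymmetric $\det\bigl[x_i^{\ell_j(\lambda)}\bigr]$ to $\bigl(\sum_j\ell_j(\lambda)^k\bigr)\det\bigl[x_i^{\ell_j(\lambda)}\bigr]$, the operator
\[
\D_k\coloneqq\frac{1}{V}\Bigl(\sum_{i=1}^N(x_i\pa_{x_i})^k\Bigr)V
\]
maps symmetric Laurent polynomials to symmetric Laurent polynomials and acts diagonally on the Schur basis, $\D_k s_\lambda=\bigl(\sum_j\ell_j(\lambda)^k\bigr) s_\lambda$. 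Applying $\D_k$ termwise to $S_{\rho_N}$ and using $s_\lambda(1^N)/s_\lambda(1^N)=1$ then gives $\E\bigl[p_k^{(\rho_N)}\bigr]=(\D_k S_{\rho_N})(1^N)$. As all the $\D_k$ share the Schur eigenbasis they commute, and products extract joint moments; in particular $\E\bigl[(p_k^{(\rho_N)})^2\bigr]=(\D_k^2 S_{\rho_N})(1^N)$.

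The technical heart is the $N\to\infty$ asymptotics of $(\D_k S_{\rho_N})(1^N)$. Writing $S_{\rho_N}=\exp(N\mathcal L_N)$ with $\mathcal L_N\coloneqq N^{-1}\log S_{\rho_N}$, hypothesis \eqref{eq:assumptioN_BG_on_LLN} gives $\mathcal L_N(x_1,\dots,x_k,1^{N-k})\to\mathsf F(x_1)+\dots+\mathsf F(x_k)$ uniformly near $1^k$. Passing to the generating operator $\sum_k\frac{u^k}{k!}\D_k=V^{-1}\bigl(\sum_i e^{u x_i\pa_{x_i}}\bigr)V$, whose $i$-th summand dilates only the variable $x_i$ by $e^u$, one writes $(\D_k S_{\rho_N})(x)/S_{\rho_N}(x)$ as a sum over $i$ of the explicit Vandermonde ratios $\prod_{j\neq i}\frac{e^u x_i-x_j}{x_i-x_j}$ times a shifted ratio of $S_{\rho_N}$. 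The confluent limit $x\to 1^N$ is singular termwise, so I would encode the sum over $i$ as a single contour integral in an auxiliary variable encircling $0$; the uniform convergence then permits replacing $N^{-1}\pa_x\log S_{\rho_N}(x,1^{N-1})$ by the single-variable derivative $\mathsf F'(x)$ at leading order, and a residue evaluation collapses the integral to \eqref{eq_limit_moments_A}. Applying the same analysis to $(\D_k^2 S_{\rho_N})(1^N)$, the leading $N^{2k+2}$ contribution factorizes as the square of the first-order term — precisely because $\mathcal L_N$ is asymptotically a sum over single variables — so $\operatorname{Var}\bigl(p_k^{(\rho_N)}\bigr)=o(N^{2k+2})$, completing the proof.

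I expect the contour-integral evaluation in the last step to be the main obstacle: one must justify passing to the confluent limit $x\to1^N$ through the zeros of $V$, and show that only the first derivative $\mathsf F'$ of the limiting single-variable profile survives at leading order, all higher-derivative and error contributions being uniformly negligible. This is exactly where the uniform analyticity in \eqref{eq:assumptioN_BG_on_LLN} enters, and it is the part requiring the careful residue/steepest-descent estimates of \cite{GorinBufetov2013free}.
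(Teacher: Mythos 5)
Your proposal is correct and takes essentially the same approach as the paper: the statement is recalled there from \cite{GorinBufetov2013free}, and the paper's own proof machinery (the proof of its generalization, \Cref{th:LLN}, which recovers the present statement as \Cref{cor:sanity}) follows exactly your architecture --- the moment method with Chebyshev, the operators $\mathcal{D}_k$ acting diagonally on the Schur basis so that $\operatorname{\mathbf{E}}\bigl[p_k^{(\rho_N)}\bigr]=(\mathcal{D}_k S_{\rho_N})(1^N)$, leading order $N^{k+1}$ produced by first derivatives of $\sqrt[N]{S_{\rho_N}}$ (equivalently of $N^{-1}\log S_{\rho_N}$), and factorization of the leading term of $(\mathcal{D}_k)^2 S_{\rho_N}\vert_{1^N}$ to kill the variance. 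Your only real deviation is organizational: you package the operators into the dilation generating series $V^{-1}\bigl(\sum_i e^{u\ssp x_i\partial_{x_i}}\bigr)V$ with an auxiliary contour, whereas the paper expands the derivatives directly and resolves the confluent limit $x\to 1^N$ through the Vandermonde zeros via its symmetrization lemmas (\Cref{lem:lemma1,lem:main-sym}) --- precisely the step you flag as the main obstacle and defer to \cite{GorinBufetov2013free}.
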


\begin{theorem}[Central Limit Theorem; Theorem 2.8 of \cite{bufetov2016fluctuations}]
\label{th:BG2-one-level}
Assume that
$$
\lim_{N \to \infty} \frac{ \pa_1 \log S_{\rho_N} (x_1, \dots, x_k, 1^{N-k})}{N} = \mathsf{F} (x_1), \qquad \mbox{for any $\ k \ge 1$},
$$
$$
\lim_{N \to \infty} \pa_1 \pa_2 \log S_{\rho_N} (x_1, \dots, x_k, 1^{N-k})
= \mathsf{G} (x_1, x_2), \qquad \mbox{for any $\ k \ge 1$},
$$
where $\mathsf{F}(x), \mathsf{G}(x,y)$ are holomorphic functions, and the convergence is uniform in a complex neighborhood of unity.
Here, $\pa_1$ and $\pa_2$ denote the partial derivatives with respect to the first and second variables, respectively.

Then the collection of random variables
\begin{equation*}
	\left( \frac{ p_k^{(\rho_N)} - \raisebox{-1pt}{$\operatorname{\mathbf{E}}$} [ p_k^{(\rho_N)} ] \mathstrut}{N^{k } } \right)_{k \ge 1}
\end{equation*}
converges, as $N \to \infty$, in the sense of moments, to a Gaussian vector with zero mean and covariance
\begin{multline*}
\lim_{N \to \infty} \frac{\operatorname{Cov}(p_{k_1}^{(\rho_N)},
p_{k_2}^{(\rho_N)})}{N^{k_1+k_2}}
=
\frac{1}{(2 \pi \ii)^2}
\oint_{|z|=\varepsilon} \oint_{|w|=2 \varepsilon} \left( \frac{1}{z} +1 +
(1+z) \ssp\mathsf{F}(1+z) \right)^{k_1} \\ \times \left( \frac{1}{w} +1
+ (1+w) \ssp\mathsf{F}(1+w) \right)^{k_2} \left( \frac{1}{(z-w)^2} +
\ssp\mathsf{G}(1+z,1+w) \right) dz\ssp dw,
\end{multline*}
where the integration contours are counterclockwise, and $\varepsilon \ll 1$.
\end{theorem}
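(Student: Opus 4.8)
The plan is to realize the power-sum observables $p_k^{(\rho_N)}$ as eigenvalues of commuting differential operators acting on Schur functions, and then extract their joint moments by differentiating the Schur generating function at $x=1^N$. Set $a_\mu(x_1,\dots,x_N)\coloneqq\det\left[x_i^{\mu_j}\right]_{i,j=1}^N$ and $\delta\coloneqq(N-1,N-2,\dots,1,0)$, so that $s_\lambda=a_{\lambda+\delta}/a_\delta$. Expanding $a_{\lambda+\delta}$ over permutations and applying $x_i\pa_{x_i}$ brings down the exponent in each row, which gives at once
\begin{equation*}
\sum_{i=1}^N\left(x_i\pa_{x_i}\right)^k a_{\lambda+\delta}=\left(\sum_{i=1}^N(\lambda_i+N-i)^k\right)a_{\lambda+\delta}.
\end{equation*}
Hence each $s_\lambda$ is an eigenfunction of
\begin{equation*}
\D_k\coloneqq a_\delta^{-1}\circ\left(\sum_{i=1}^N\left(x_i\pa_{x_i}\right)^k\right)\circ a_\delta
\end{equation*}
with eigenvalue $\sum_i(\lambda_i+N-i)^k$, i.e.\ the value of $p_k^{(\rho_N)}$ on $\lambda$. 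The operators $\D_k$ are diagonal in the Schur basis, hence commute, and applying $\D_{k_1}\cdots\D_{k_m}$ to $S_{\rho_N}$ and setting $x=1^N$ yields exactly the mixed moment $\E\bigl[p_{k_1}^{(\rho_N)}\cdots p_{k_m}^{(\rho_N)}\bigr]$, since $s_\lambda(1^N)/s_\lambda(1^N)=1$.

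The difficulty is that evaluation at $x=1^N$ is formally singular, because $a_\delta$ vanishes on the diagonal. I would resolve this by encoding each factor $x_i\pa_{x_i}$ through an auxiliary contour-integral variable, so that the confluence $x_i\to1$ is taken as a residue; by symmetry one only needs to act in as many variables as there are operators, keeping the remaining coordinates equal to $1$. A single operator $\D_k$ then contributes, to leading order in $N$, the integrand of \eqref{eq_limit_moments_A} --- a factor $\left(\tfrac1z+1+(1+z)\,\mathsf F(1+z)\right)^{k+1}$ weighted by $\tfrac{1}{(k+1)(1+z)}$ --- assembled from the hypothesis $\pa_1\log S_{\rho_N}/N\to\mathsf F$, and recovers the Law of Large Numbers normalization $\E\bigl[p_k^{(\rho_N)}\bigr]\sim N^{k+1}\mes_k$.

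For the covariance, two operators act on distinct auxiliary variables $z$ and $w$, and the resulting double contour integral splits into a \emph{disconnected} piece, which reassembles the product $\E[p_{k_1}^{(\rho_N)}]\,\E[p_{k_2}^{(\rho_N)}]$, and a \emph{connected} piece of two origins: the cross-terms of the two Vandermonde factors produce the universal kernel $1/(z-w)^2$, while the second mixed logarithmic derivative contributes $\pa_1\pa_2\log S_{\rho_N}\to\mathsf G(1+z,1+w)$. Subtracting $\E[p_{k_1}^{(\rho_N)}]\,\E[p_{k_2}^{(\rho_N)}]$ cancels the disconnected piece and leaves exactly the kernel $\tfrac{1}{(z-w)^2}+\mathsf G(1+z,1+w)$ integrated against the two Law of Large Numbers factors, with total order $N^{k_1+k_2}$, as stated.

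Finally, to upgrade convergence of the covariance to a Gaussian limit I would control all higher joint cumulants. The key is a power-counting estimate: after centering, only fully connected clusters of operators survive, and each link joining two operators in a cluster costs one factor of $N$ relative to the number of derivatives it carries, so that the $m$-th joint cumulant of $\bigl(p_{k_1}^{(\rho_N)},\dots,p_{k_m}^{(\rho_N)}\bigr)$ is $O\bigl(N^{k_1+\dots+k_m-(m-2)}\bigr)$. Dividing by $N^{k_1+\dots+k_m}$, all cumulants of order $m\ge3$ vanish in the limit, which forces the limiting vector to be Gaussian with the stated covariance. I expect the main obstacle to be the contour bookkeeping of the previous two steps --- isolating the universal $1/(z-w)^2$ term from the confluence of variables and proving, uniformly in $N$, that the subleading contributions are genuinely negligible --- together with making the cumulant power-counting rigorous.
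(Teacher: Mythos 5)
Your proposal is correct in outline and takes essentially the same approach as the proof this paper relies on (Theorem 2.8 of Bufetov--Gorin, whose machinery is also reproduced in Sections 5--6 here for the paper's generalizations): differential operators $\mathcal{D}_k$ acting diagonally on Schur functions, evaluation of mixed moments at $1^N$ handled by symmetrization, power counting in $N$ that separates the Vandermonde cross-terms (giving $1/(z-w)^2$) from the second logarithmic derivative (giving $\mathsf{G}$), and a Wick-pairing/cumulant argument showing that higher connected contributions are of lower order, which forces Gaussianity.
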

We also
note that \cite[Section~2.4]{bufetov2016fluctuations}
provides multilevel generalizations of
the above Central Limit Theorem, which concern limits of covariances
of the form
$N^{-k_1-k_2}\ssp\operatorname{Cov}\bigl(p_{k_1}^{(\rho_{\lfloor \alpha_1 N \rfloor})},\,p_{k_2}^{(\rho_{\lfloor \alpha_2 N \rfloor})}\bigr)$.

\begin{remark}
	Observe that after exponentiation, the left-hand side of \eqref{eq:assumptioN_BG_on_LLN} becomes the same as
	the expression in
	our \Cref{th:LLN-intro} from the Introduction.
	However, this simple correspondence does not extend to the level of the
	Central Limit Theorem (compare to \Cref{th:CLT-gen} below).
	Indeed, the CLT computations require differentiating either the
	logarithm or the $N$-root of the Schur generating function,
	which, in general, yields different results.
\end{remark}

\section{Domino tilings of the Aztec diamond with random edge weights}
\label{sec:model}

Let us now describe the model we study in the present paper.
We consider domino tilings of the Aztec
diamond of size $M$ with random edge weights (illustrated in \Cref{fig:AztecWeights-intro}).
Equivalently, via the
encoding of \Cref{subsec:comb},
we can (and will) work with sequences of signatures
$(\lambda^{(N)}, \upsilon^{(N)}, \lambda^{(N-1)},
\upsilon^{(N-1)}, \dots, \lambda^{(2)}, \upsilon^{(2)},
\lambda^{(1)}, \upsilon^{(1)})$
\eqref{eq:Aztec-signatures-sequence-def}
and the measure
$\mathbf{P}_{\beta_1, \dots, \beta_M}$ on them (\Cref{def:Aztec-signatures-measure}),
but now we also choose the parameters
$(\beta_1, \dots, \beta_M)$ to be random. We will use the
notation $(\bb_1, \bb_2, \dots, \bb_M)$ for these parameters in order to
emphasize that from now on these are \textbf{random variables} rather
than fixed reals.

\begin{definition}[Random environment with i.i.d.\ weights]
\label{def:iid_environment}
	We consider the case where $\{\bb_i\}_{i=1}^M$ are i.i.d.\
	random variables drawn from a distribution $\BB_M$, which
	may depend on $M$. We assume that the distribution
	$\BB_M$
	has finite moments of all orders.
\end{definition}

We will study two regimes: one in which the
distribution is fixed, and another in which it varies with
$M$ so that the fluctuations due to the random weights and
those due to the domino tiling randomness occur on the same
scale.

\medskip
In the context of random environments, it is common to
distinguish between two types of expectations: the
\textbf{quenched} expectation, where we first fix the random
environment (in our case, the parameters
$\vec{\bb} = (\bb_1, \ldots, \bb_M)$) and then
compute expectations with respect to the domino tiling, and
the \textbf{annealed} expectation, where we
average over both
the randomness in the tiling and the randomness in the
environment.
Let us denote by $\operatorname{\mathbf{E}}_{\BB}$
the expectation with respect to the randomness coming from
the environment (i.e., the distribution of $\vec{\bb}$).
This notation is not strictly necessary (we might as well
write the usual expectation $\operatorname{\mathbf{E}}$),
but it helps emphasize the randomness coming from the environment.

For the i.i.d.\ environment $\vec{\bb} = (\bb_1, \ldots, \bb_M)$,
the annealed
Schur generating function has the following explicit form:

\begin{proposition}
\label{prop:SGF-rand}
In the sequence of signatures
$(\lambda^{(N)}, \upsilon^{(N)},
\dots,
\lambda^{(1)}, \upsilon^{(1)})$
\eqref{eq:Aztec-signatures-sequence-def}
under the measure
$\mathbf{P}_{\bb_1, \bb_2 \dots, \beta_M}$ (with $\bb_i$ random
as in \Cref{def:iid_environment}), let $\rho_N$ be the marginal
distribution of $\lambda^{(N)}$. Then its annealed Schur generating function is given by
\begin{equation}
	\label{eq:annealed_Schur_gf_main_statement}
	S_{\mathbf{\rho}_N} (x_1,\dots,x_N) = \left( \operatorname{\mathbf{E}}_{\BB} \prod_{i=1}^N \left( 1 - \bb + x_i \bb \right) \right)^{M-N}.
\end{equation}
\end{proposition}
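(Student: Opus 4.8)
The plan is to reduce the annealed statement to the deterministic computation already carried out in \Cref{prop:SGF-comb}, using two structural facts: the Schur generating function depends linearly on the underlying measure, and the random parameters entering the quenched formula are i.i.d.

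First I would fix the environment $\vec{\bb}=(\bb_1,\ldots,\bb_M)$ and write $\rho_N^{\vec{\bb}}$ for the quenched marginal law of $\lambda^{(N)}$ under $\mathbf{P}_{\bb_1,\ldots,\bb_M}$. Since the full measure averages over both the tiling and the environment, the marginal $\rho_N$ in the statement is the annealed law $\rho_N(\lambda)=\mathbf{E}_{\BB}\bigl[\rho_N^{\vec{\bb}}(\lambda)\bigr]$. Because $S_{\rho_N}$ is a sum of the weights $\rho_N(\lambda)$ against the (deterministic) ratios $s_\lambda(x)/s_\lambda(1^N)$, and because for the Aztec diamond $\rho_N$ is supported on the finite set of signatures with entries in $\{0,1,\ldots,M-N\}$ (each of the $M-N$ vertical steps raises an entry by at most one), the defining sum is finite and I may freely interchange $\mathbf{E}_{\BB}$ with the sum over $\lambda$. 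This yields
\[
S_{\rho_N}(x_1,\ldots,x_N)=\mathbf{E}_{\BB}\bigl[S_{\rho_N^{\vec{\bb}}}(x_1,\ldots,x_N)\bigr].
\]

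Next I would substitute the explicit quenched expression from \Cref{prop:SGF-comb}, namely $S_{\rho_N^{\vec{\bb}}}(x)=\prod_{i=1}^N\prod_{j=N+1}^M(1-\bb_j+x_i\bb_j)$, and reorder the two products so that the factor indexed by $j$ reads $g(\bb_j):=\prod_{i=1}^N(1-\bb_j+x_i\bb_j)$. The variables $\bb_{N+1},\ldots,\bb_M$ are distinct and, by \Cref{def:iid_environment}, independent with common law $\BB_M$; hence the expectation of the product factors as a product of expectations, each equal to $\mathbf{E}_{\BB}[g(\bb)]$, giving
\[
S_{\rho_N}(x_1,\ldots,x_N)=\prod_{j=N+1}^M\mathbf{E}_{\BB}[g(\bb_j)]=\bigl(\mathbf{E}_{\BB}[g(\bb)]\bigr)^{M-N},
\]
which is precisely \eqref{eq:annealed_Schur_gf_main_statement}.

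The argument is essentially bookkeeping, and the only point needing care is the interchange of expectation and summation; this is harmless here because of the finite support just noted (alternatively, because $\BB_M$ has finite moments of all orders and the quenched generating function is a polynomial in the $\bb_j$ with $x$-dependent coefficients, so the relevant expectation is finite). The one place where the specifics of the model matter is that \Cref{prop:SGF-comb} has already integrated out the first $N$ parameters, so that only $\bb_{N+1},\ldots,\bb_M$ survive; this is exactly what makes the i.i.d.\ factorization produce $M-N$ identical factors, and hence the exponent in the claim.
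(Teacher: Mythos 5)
Your proof is correct and follows essentially the same route as the paper: apply \Cref{prop:SGF-comb} to get the quenched product formula, then use the independence of $\bb_{N+1},\dots,\bb_M$ to factor the annealed expectation into $M-N$ identical factors. The only addition is your explicit justification of interchanging $\operatorname{\mathbf{E}}_{\BB}$ with the sum over $\lambda$ (via the finite support $\lambda_1\le M-N$), a point the paper leaves implicit.
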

\begin{proof}
Applying \Cref{prop:SGF-comb}, we have
\begin{equation*}
	S_{\mathbf{\rho}_N} (x_1,\dots,x_N)
	=
	\operatorname{\mathbf{E}}_{\BB}
	\prod_{i=1}^N \prod_{j=N+1}^M \left( 1 - \bb_j+ x_i\ssp \bb_j \right),
\end{equation*}
which simplifies to the right-hand side of \eqref{eq:annealed_Schur_gf_main_statement}
thanks to the independence of the environment random variables
$\bb_j$.
\end{proof}

\begin{remark}
\label{rem:SGF-rand}
Although the proof of \Cref{prop:SGF-rand} is very
short, it plays an important conceptual role. Domino
tilings of the Aztec diamond is a well-studied model,
which is amenable to asymptotic analysis by a
number of approaches.
These include
analysis of the shuffling algorithm \cite{jockusch1998random}
and further application of cluster algebras
\cite{diFrancesco2014arctic_via_cluster},
variational principle \cite{CohnKenyonPropp2000},
\cite{OkounkovKenyon2007Limit},
determinantal point process methods
\cite{johansson2002non}, \cite{johansson2006eigenvalues},
\cite{chhita2015asymptotic},
the use of orthogonal polynomials
\cite{duits2017two},
(nonrigorous) tangent method
\cite{DiFrancescoLapa2018},
Schur generating functions
\cite{GorinBufetov2013free}, \cite{bufetov2016fluctuations},
\cite{BufetovKnizel2018},
and
graph embeddings focused on conformal structure
\cite{chelkak2024fluctuations}, \cite{berggren2024perfect}.

\Cref{prop:SGF-rand} shows that the
method of Schur generating functions is particularly
suited to analyzing random weights in
one-periodic situations (equivalently, random
parameters of Schur measures and processes).
All other methods listed above
seem to encounter immediate technical obstacles
in the random environment setting.

We also note that Schur generating functions,
while suitable for studying the global behavior of
domino tilings (even in one-periodic random environment),
are not well-suited for
their local behavior. Thus, understanding the local
behavior of domino tilings in a random environment
requires overcoming these technical obstacles.
\end{remark}

\section{Decreasing variance: Gaussian Free Field plus Brownian motion}
\label{sec:decreasing-variance}

\subsection{Law of Large Numbers and Central Limit Theorem}
\label{sub:LLN-CLT-decreasing-variance}

Here we
study
domino tilings with random edge weights (\Cref{sec:model})
under the assumption that the distribution $\BB=\BB_M$ depends on the size $M$ of the Aztec diamond,
has compact support on $\R_{>0}$ uniformly in $M$, and
its first two moments satisfy
\begin{equation}
\label{eq:assum1}
\lim_{M \to \infty} \operatorname{\mathbf{E}} [\bb] = \beta,
\qquad \lim_{M \to \infty} M\cdot \operatorname{Var}[\bb] = \sigma^2,
\qquad \bb \sim \BB_M,
\end{equation}
where $0<\beta<1$ and $\sigma>0$.
Note that \eqref{eq:assum1}
implies that as $M \to \infty$, $\bb$ convreges in probability to the deterministic value $\beta$.
Let us also pick $0<\alpha<1$, and assume that the index $N$ of the random signature $\lambda^{(N)}$ in the sequence of signatures \eqref{eq:Aztec-signatures-sequence-def} (corresponding to a domino tiling of the Aztec diamond of size $M$) satisfies
\begin{equation}
\label{eq:assum1_prim}
\lim_{M \to \infty} \frac{N}{M} = \alpha.
\end{equation}

\begin{remark}
	The results of this section do not require the enhanced asymptotics via Schur generating functions formulated
	in \Cref{subsec:Schur-generating-functions-intro}, and are obtained as a combination of
	\Cref{prop:SGF-rand} and the known results of \cite{GorinBufetov2013free}, \cite{bufetov2016fluctuations}.
\end{remark}

\begin{theorem}
\label{thm:limit_shape_decreasing_variance}
Under assumptions \eqref{eq:assum1}--\eqref{eq:assum1_prim},
let
$h_M$ be the (unrescaled) domino height
function of an Aztec diamond of size $M$ with edge weights $\bb_i$.
Then the rescaled height profile
$M^{-1} h_M\bigl(\lfloor Mu\rfloor,\lfloor Mv\rfloor\bigr)$
converges (in probability, in the sense of moments of the measures $m[\rho_N]$
\eqref{eq:random-measure-m-rho-definition}
for any $N\sim \alpha M$, $0<\alpha<1$)
to a deterministic limit shape which is the same as
in the case of nonrandom parameters $\bb_i\equiv \beta$ for all $i$.
\end{theorem}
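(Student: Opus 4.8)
The strategy is to deduce the limit shape directly from the known Law of Large Numbers (\Cref{theorem_moment_convergence}) applied to the annealed Schur generating function computed in \Cref{prop:SGF-rand}. Since \Cref{thm:limit_shape_decreasing_variance} asserts that the limit shape coincides with the nonrandom case $\bb_i\equiv\beta$, the heart of the matter is to show that the vanishing-variance randomness does not affect the leading ($N$-th root) asymptotics of $S_{\rho_N}$, even though it will matter at the level of fluctuations.

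**First step: extract the limiting $\mathsf F$.**
I would start from \eqref{eq:annealed_Schur_gf_main_statement}, which under \eqref{eq:assum1_prim} reads $S_{\rho_N}(x_1,\dots,x_N)=\bigl(\operatorname{\mathbf{E}}_{\BB}\prod_{i=1}^N(1-\bb+x_i\bb)\bigr)^{M-N}$ with $M-N\sim(1-\alpha)M\sim\frac{1-\alpha}{\alpha}N$. To apply \Cref{theorem_moment_convergence} I need $\frac1N\log S_{\rho_N}(x_1,\dots,x_k,1^{N-k})\to \mathsf F(x_1)+\dots+\mathsf F(x_k)$. The plan is to expand the annealed product. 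Writing $\prod_{i=1}^N(1-\bb+x_i\bb)=\prod_{i=1}^k(1-\bb+x_i\bb)\cdot(1)^{N-k}$ after setting the last $N-k$ variables to $1$ (each such factor equals $1$ identically, independently of $\bb$), the only randomness that survives lives in the first $k$ factors. Thus $\operatorname{\mathbf{E}}_{\BB}\prod_{i=1}^k(1-\bb+x_i\bb)$ is a single expectation of a fixed-degree polynomial in $\bb$, and as $M\to\infty$, since $\bb\to\beta$ in probability with uniformly bounded support (hence bounded moments), this expectation converges to $\prod_{i=1}^k(1-\beta+x_i\beta)$. Taking logarithms and multiplying by $(M-N)/N\to(1-\alpha)/\alpha$ gives
\begin{equation*}
\mathsf F(x)=\frac{1-\alpha}{\alpha}\,\log\bigl(1-\beta+\beta x\bigr),
\end{equation*}
which is exactly the function one obtains for the deterministic weighting $\bb_i\equiv\beta$. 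Uniform convergence in a complex neighborhood of $1^k$ follows because the polynomial is of fixed degree $k$ and its coefficients (moments of $\bb$) converge, with the logarithm analytic near the nonvanishing value $\prod(1-\beta+\beta x_i)$.

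**Second step: conclude via the known LLN and translate back to the height function.**
With $\mathsf F$ identified and independent of the fine structure of $\BB_M$ beyond its mean $\beta$, \Cref{theorem_moment_convergence} yields convergence of $m[\rho_N]$ in probability, in the sense of moments, to the deterministic measure $\mes$ whose moments are given by \eqref{eq_limit_moments_A}. Since $\mathsf F$ is the same as in the nonrandom case, $\mes$ is the same limiting measure; invoking the correspondence recalled after \eqref{eq:random-measure-m-rho-definition} (that $N\,F_{m[\rho_N]}(x/N)$ is the height function along a slice), one reads off that the rescaled height profile $M^{-1}h_M(\lfloor Mu\rfloor,\lfloor Mv\rfloor)$ converges to the same limit shape as for $\bb_i\equiv\beta$, for every slice $N\sim\alpha M$. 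Assembling the slices over $0<\alpha<1$ gives the full two-dimensional limit shape.

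**Main obstacle.**
The essential point — and the only place requiring care — is the interchange of the expectation $\operatorname{\mathbf{E}}_{\BB}$ with the $N\to\infty$ (equivalently $M\to\infty$) limit after taking the $N$-th root/logarithm. Because we fix $k$ variables and set the remaining $N-k$ equal to $1$, the randomness collapses onto finitely many factors, so no large-deviation or concentration estimate on the full annealed product is needed; the convergence $\operatorname{\mathbf{E}}_{\BB}[\bb^j]\to\beta^j$ follows from $\bb\to\beta$ in probability together with the uniform compact support hypothesis in \eqref{eq:assum1}, which supplies uniform integrability. I expect the genuinely delicate verification to be the \emph{uniformity} of the convergence of $\frac1N\log S_{\rho_N}$ in a complex neighborhood of $1^k$ (as required by \Cref{theorem_moment_convergence}), which I would establish by noting that the relevant polynomials are of bounded degree with uniformly convergent coefficients, so their logarithms converge uniformly on a small enough polydisc where the argument stays bounded away from $0$.
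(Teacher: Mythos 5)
Your proposal is correct and follows essentially the same route as the paper's proof: compute the annealed Schur generating function via \Cref{prop:SGF-rand}, use the convergence in probability $\bb\to\beta$ (with the uniform compact support from \eqref{eq:assum1}) to show that $\frac{1}{N}\log S_{\rho_N}(x_1,\dots,x_k,1^{N-k})$ converges to $(\alpha^{-1}-1)\sum_{i=1}^k\log(1-\beta+\beta x_i)$, i.e.\ the same limit as for deterministic weights $\bb_i\equiv\beta$, and then invoke \Cref{theorem_moment_convergence}. Your additional care about uniformity of the convergence on a complex polydisc (bounded-degree polynomials with convergent coefficients, logarithm analytic away from zero) fills in a detail the paper leaves implicit, but does not change the argument.
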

\begin{proof}
By \Cref{prop:SGF-rand} , the
normalized logarithm of the annealed Schur generating function equals
\begin{equation}
	\label{eq:annealed_Schur_gf_log_proof}
	\frac{1}{N}\log \ssp S_{\mathbf{\rho}_N} (x_1,\dots,x_k,1^{N-k})
	=
	\frac{M-N}{N}\ssp \log\ssp
	\operatorname{\mathbf{E}}_{\BB} \prod_{i=1}^k \left( 1 - \bb + x_i \bb \right).
\end{equation}
Since $\bb$ converges in probability to $\beta$,
the expectation factorizes in the limit.
Thus, the right-hand side of \eqref{eq:annealed_Schur_gf_log_proof}
converges to the same expression $(\alpha^{-1}-1)\sum_{i=1}^{k}\log(1-\beta+x_i\ssp \beta)$
as for the deterministic parameters $\bb_i\equiv \beta$.
By
\Cref{theorem_moment_convergence}, we get the desired result.
\end{proof}

Next, let us compute the asymptotic quantities entering \Cref{th:BG2-one-level}:

\begin{proposition}
\label{prop:SGF-asym-old}
Under assumptions \eqref{eq:assum1}--\eqref{eq:assum1_prim}, for any $k \in \mathbb{Z}_{\ge1}$, we have
\begin{equation*}
\lim_{M \to \infty} \frac{1}{N}\ssp \pa_1 \log \Bigl( \operatorname{\mathbf{E}}_{\BB} \prod_{i=1}^k \bigl( 1 - \bb + x_i\ssp \bb \bigr) \Bigr)^{M-N} = \Bigl( \frac{1}{\alpha} - 1 \Bigr) \frac{\beta}{1 - \beta + \beta x_1},
\end{equation*}
and
\begin{equation*}
\lim_{M \to \infty} \pa_1 \pa_2 \log \Bigl( \operatorname{\mathbf{E}}_{\BB} \prod_{i=1}^k \bigl( 1 - \bb + x_i\ssp \bb \bigr) \Bigr)^{M-N} = \frac{(1-\alpha)\ssp \sigma^2}{(1 - \beta + \beta x_1)^2 (1 - \beta + \beta x_2)^2}.
\end{equation*}
\end{proposition}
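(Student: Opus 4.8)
The plan is to compute the two limits by directly expanding the logarithm of the annealed expectation in the regime where $\bb$ concentrates around $\beta$. Write $g(x_1,\dots,x_k)\coloneqq \operatorname{\mathbf{E}}_{\BB}\prod_{i=1}^k(1-\bb+x_i\ssp\bb)$, so that the quantity inside the derivatives is $(M-N)\log g$. Since $(M-N)/N \to \alpha^{-1}-1$, the first limit reduces to computing $\lim_{M\to\infty}\pa_1\log g$, and the second to $\lim_{M\to\infty}(M-N)\ssp\pa_1\pa_2\log g$. The key point is that $g$ and its first derivatives converge to the \emph{factorized} deterministic values (because $\bb\to\beta$ in probability and $\BB_M$ has uniformly compact support, so bounded convergence applies to the polynomial integrand), whereas the \emph{mixed} second derivative $\pa_1\pa_2\log g$ captures the covariance structure and carries the surviving $\sigma^2$ contribution after multiplication by the diverging factor $M-N\sim(1-\alpha)M$.

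For the first formula, I would compute $\pa_1\log g = (\pa_1 g)/g$. Here $\pa_1 g = \operatorname{\mathbf{E}}_{\BB}\bigl[\bb\prod_{i=2}^k(1-\bb+x_i\ssp\bb)\bigr]$. By the uniform compact support assumption together with $\bb\to\beta$ in probability, both $g$ and $\pa_1 g$ converge to their values at $\bb\equiv\beta$, namely $g\to\prod_{i=1}^k(1-\beta+\beta x_i)$ and $\pa_1 g\to\beta\prod_{i=2}^k(1-\beta+\beta x_i)$. Taking the ratio gives $\pa_1\log g\to\beta/(1-\beta+\beta x_1)$, and multiplying by $(M-N)/N\to\alpha^{-1}-1$ yields the stated first limit.

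For the second formula, the crucial observation is that $\pa_1\pa_2\log g=\bigl(g\ssp\pa_1\pa_2 g-(\pa_1 g)(\pa_2 g)\bigr)/g^2$. The numerator is (up to the product over the remaining variables) exactly a covariance: $g\ssp\pa_1\pa_2 g-(\pa_1 g)(\pa_2 g)$ measures the second-order fluctuation of $\bb$, and for a deterministic $\bb\equiv\beta$ it vanishes identically. The leading nonvanishing term is proportional to $\operatorname{Var}[\bb]$. Concretely, expanding $\bb=\beta+(\bb-\beta)$ and keeping terms to second order, the numerator behaves like $\operatorname{Var}[\bb]$ times a product of the remaining factors $(1-\beta+\beta x_i)$ squared for $i\ge 3$ and the appropriate derivative factors, so that $\pa_1\pa_2\log g \sim \operatorname{Var}[\bb]/\bigl((1-\beta+\beta x_1)^2(1-\beta+\beta x_2)^2\bigr)$. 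Since $\pa_1\pa_2\log g$ is itself of order $\operatorname{Var}[\bb]=O(1/M)$, multiplying by the diverging prefactor $M-N\sim(1-\alpha)M$ converts the $M\cdot\operatorname{Var}[\bb]\to\sigma^2$ scaling into the finite limit $(1-\alpha)\ssp\sigma^2/\bigl((1-\beta+\beta x_1)^2(1-\beta+\beta x_2)^2\bigr)$.

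The main obstacle I anticipate is the bookkeeping in the second computation: one must carefully track which terms survive after the $O(1/M)$ cancellations, since the naive leading-order contributions vanish and only the variance term remains. The cleanest way to organize this is to introduce $\delta\coloneqq\bb-\beta$, use $\operatorname{\mathbf{E}}_{\BB}[\delta]=0$ and $\operatorname{\mathbf{E}}_{\BB}[\delta^2]=\operatorname{Var}[\bb]$, and Taylor-expand $\log g$ around the deterministic value, discarding terms of order $\operatorname{Var}[\bb]^{3/2}$ and higher (here the uniform compact support guarantees the higher moments are controlled and the remainder is genuinely $o(1/M)$). The $x_3,\dots,x_k$ dependence should drop out of the mixed derivative in the limit, reflecting that the limiting covariance depends only on the two differentiated variables — this is consistent with the ``for any $k\ge 1$'' phrasing and provides a useful sanity check.
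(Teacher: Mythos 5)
Your proposal takes the same route as the paper's proof: the first limit is the ratio $\pa_1 g/g$ with $g=\operatorname{\mathbf{E}}_{\BB}\prod_{i=1}^k(1-\bb+x_i\ssp\bb)$, evaluated using concentration of $\bb$ near $\beta$ together with $(M-N)/N\to\alpha^{-1}-1$; the second limit is extracted from the numerator $g\ssp\pa_1\pa_2 g-(\pa_1 g)(\pa_2 g)$, which vanishes for deterministic $\bb$, so that only the variance survives against the diverging factor $M-N\sim(1-\alpha)M$. This is exactly the paper's argument. However, two steps in your writeup are not justified by the stated hypotheses, and the second is a genuine gap.

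First, you set $\delta=\bb-\beta$ and use $\operatorname{\mathbf{E}}_{\BB}[\delta]=0$. Assumption \eqref{eq:assum1} gives only $\operatorname{\mathbf{E}}[\bb]\to\beta$, not equality; the discrepancy $\epsilon_M=\operatorname{\mathbf{E}}[\bb]-\beta$ may tend to zero arbitrarily slowly (say, like $1/\log M$), in which case $\operatorname{\mathbf{E}}_{\BB}[\delta^2]=\operatorname{Var}[\bb]+\epsilon_M^2$ is of much larger order than $\sigma^2/M$ and cannot be substituted for the variance. The paper is careful on exactly this point: its leading term is the combination $\operatorname{\mathbf{E}}_{\BB}(\xi^2)-\operatorname{\mathbf{E}}_{\BB}(\xi)^2=\operatorname{Var}[\bb]$, not $\operatorname{\mathbf{E}}_{\BB}(\xi^2)$. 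The fix is easy (center at $\operatorname{\mathbf{E}}[\bb]$, or keep the variance combination throughout), but as written the step is wrong.

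Second, your remainder bound is false: uniform compact support does \emph{not} give $\operatorname{\mathbf{E}}_{\BB}|\delta|^3=O\bigl(\operatorname{Var}[\bb]^{3/2}\bigr)$; it only gives $\operatorname{\mathbf{E}}_{\BB}|\delta|^3\le C\operatorname{Var}[\bb]=O(1/M)$. Concretely, take $\bb=\beta$ with probability $1-1/M$ and $\bb=\beta+c$ with probability $1/M$: this law has uniformly compact support and satisfies \eqref{eq:assum1} with $\sigma^2=c^2$, yet its third central moment is of order $c^3/M$ --- the same order as the variance term you retain, so the cubic terms cannot be discarded on the grounds you give. This is not a formality: in this example the cubic contributions produce, in the limit of $(M-N)\ssp\pa_1\pa_2\log g$, an extra multiplicative factor $\prod_{i\ge3}\frac{1-\gamma+x_i\gamma}{1-\beta+x_i\beta}$ with $\gamma=\beta+c$, so the very feature you use as a sanity check (the dropping out of $x_3,\dots,x_k$) is what fails. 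Controlling the cubic terms is precisely the delicate part of the ``direct calculation'' that the paper compresses into one sentence; your argument as written establishes the claim only under an additional hypothesis such as $\operatorname{\mathbf{E}}_{\BB}\bigl|\bb-\operatorname{\mathbf{E}}[\bb]\bigr|^3=o(1/M)$ (e.g., when the support of $\BB_M$ shrinks to $\{\beta\}$ at rate $M^{-1/2}$), which goes beyond \eqref{eq:assum1} and uniform compactness of the support.
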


\begin{proof}
Differentiating with respect to $x_1$, we have
$$
\pa_1 \log \left( \operatorname{\mathbf{E}}_{\BB} \prod_{i=1}^k \left( 1 - \bb + x_i\ssp\bb \right) \right)
=
\frac{\operatorname{\mathbf{E}}_{\BB} \left[ \bb \prod_{i=2}^k \left( 1 - \bb + x_i\ssp\bb \right) \right] }{\operatorname{\mathbf{E}}_{\BB} \left[ \prod_{i=1}^k \left( 1 - \bb + x_i\ssp\bb \right) \right] }.
$$
Since $\bb$ converges to a deterministic value $\beta$ as $M \to \infty$, we immediately
get the first statement of the proposition.

Further differentiating with respect to $x_2$, we have
\begin{multline*}
\pa_2 \pa_1 \log \left( \operatorname{\mathbf{E}}_{\BB} \prod_{i=1}^k \left( 1 - \bb + x_i\ssp\bb \right) \right)
=
\frac{ \operatorname{\mathbf{E}}_{\BB} \left[ \bb^2 \prod\limits_{i=3}^k \left( 1 - \bb + x_i\ssp\bb \right) \right] \operatorname{\mathbf{E}}_{\BB} \left[ \prod\limits_{i=1}^k \left( 1 - \bb + x_i\ssp\bb \right) \right]}{\operatorname{\mathbf{E}}_{\BB} \left[ \prod\limits_{i=1}^k \left( 1 - \bb + x_i\ssp\bb \right) \right]^2}
\\ -\frac{\operatorname{\mathbf{E}}_{\BB} \left[ \bb \prod\limits_{i=2}^k \left( 1 - \bb + x_i\ssp\bb \right) \right] \operatorname{\mathbf{E}}_{\BB} \left[ \bb \prod\limits_{i=1; i \ne 2}^k \left( 1 - \bb + x_i\ssp\bb \right) \right] }
{ \operatorname{\mathbf{E}}_{\BB} \left[ \prod\limits_{i=1}^k \left( 1 - \bb + x_i\ssp\bb \right) \right]^2 }.
\end{multline*}
Set
$$
\xi\coloneqq \bb - \beta,
\qquad
\psi\coloneqq \prod_{i=3}^k ( 1 - \bb + \bb x_i ).
$$
Next, the numerator of the expression above,
\begin{multline*}
\operatorname{\mathbf{E}}_{\BB} \left[ \bb^2 \psi \right]
\operatorname{\mathbf{E}}_{\BB} \left[ \left( 1 - \bb + x_1
\bb \right) \left( 1 - \bb + x_2 \bb \right) \psi \right]
\\
-
\operatorname{\mathbf{E}}_{\BB} \left[ \left( 1 - \bb + x_1
\bb \right) \bb \psi \right] \operatorname{\mathbf{E}}_{\BB}
\left[ \left( 1 - \bb + x_2 \bb \right) \bb \psi \right],
\end{multline*}
can be expanded into a Taylor series with terms of order 1, $\xi$, $\xi^2$, and $O(\xi^3)$. After a direct calculation, one sees that the leading term as $M\to\infty$ is
$$
\left (\operatorname{\mathbf{E}}_{\BB} ( \xi^2) -
\operatorname{\mathbf{E}}_{\BB} ( \xi)^2 \right)
\operatorname{\mathbf{E}}_{\BB} [\psi^2]
\approx \frac{\sigma^2}{M} \operatorname{\mathbf{E}}_{\BB} [\psi^2].
$$
Taking into account the denominator and using the convergence of $\bb$ to $\beta$, we obtain the
second statement of the proposition.
\end{proof}

Recall that $\rho_N$ denotes the distribution of $\lambda^{(N)}$,
and its moments $p_k^{(\rho_N)}$ are defined in \eqref{eq:p_k-rho-moments-in-text}.
The next theorem is the first main result of this paper.

\begin{theorem}
\label{th:main-decreasing}
Under assumptions \eqref{eq:assum1}--\eqref{eq:assum1_prim},
the normalized moments
$M^{-k} p_{k}^{(\rho_N)}$, $k\in \mathbb{Z}_{\ge1}$, are jointly asymptotically Gaussian, with the limiting covariance
given by
\begin{multline}
\label{eq:main-decreasing}
\lim_{N \to \infty} \frac{\operatorname{Cov}\bigl(p_{k_1}^{(\rho_N)},\,p_{k_2}^{(\rho_N)}\bigr)}{M^{k_1+k_2}}
= \frac{ \alpha^{k_1+k_2}}{(2\pi \mathbf{i})^2}
	\oint_{|z|=\varepsilon}
	\oint_{|w|=2\varepsilon}
	\left( \frac{1}{z}+1+\frac{(1+z)(1-\alpha)\beta}{\alpha\ssp (1-\beta+\beta(z+1))} \right)^{k_1}
	\\\times
	\left( \frac{1}{w}+1+\frac{(1+w)(1-\alpha)\beta}{\alpha\ssp (1-\beta+\beta(w+1))} \right)^{k_2}
	\left(
		\frac{(1-\alpha)\ssp\sigma^2}{(1+\beta z)^2(1+\beta w)^2}
		+\frac{1}{(z-w)^{2}}
	\right)
	\,dz\,dw.
\end{multline}
The integration contours are counterclockwise and $\varepsilon \ll 1$.
\end{theorem}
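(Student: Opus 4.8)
The plan is to deduce the statement directly from the known Central Limit Theorem \Cref{th:BG2-one-level}, feeding it the asymptotic data computed in \Cref{prop:SGF-asym-old}, and then converting the $N$-normalization native to \Cref{th:BG2-one-level} into the $M$-normalization required here. First I would record that, by \Cref{prop:SGF-rand}, specializing $x_{k+1}=\dots=x_N=1$ collapses each factor $(1-\bb+x_i\ssp\bb)$ with $i>k$ to $1$, so that
\[
S_{\rho_N}(x_1,\dots,x_k,1^{N-k})=\Bigl(\operatorname{\mathbf{E}}_{\BB}\prod_{i=1}^k(1-\bb+x_i\ssp\bb)\Bigr)^{M-N}.
\]
This is precisely the quantity differentiated in \Cref{prop:SGF-asym-old}, whose two limits therefore identify the functions entering \Cref{th:BG2-one-level} as
\[
\mathsf{F}(x_1)=\Bigl(\tfrac{1}{\alpha}-1\Bigr)\frac{\beta}{1-\beta+\beta x_1},
\qquad
\mathsf{G}(x_1,x_2)=\frac{(1-\alpha)\ssp\sigma^2}{(1-\beta+\beta x_1)^2(1-\beta+\beta x_2)^2}.
\]

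Next I would invoke \Cref{th:BG2-one-level} to conclude that the vector $\bigl((p_k^{(\rho_N)}-\operatorname{\mathbf{E}}[p_k^{(\rho_N)}])/N^k\bigr)_{k\ge1}$ is jointly asymptotically Gaussian, with covariance given by the double contour integral of \Cref{th:BG2-one-level} written in terms of $\mathsf{F}$ and $\mathsf{G}$. To pass to the $M$-scale I would use \eqref{eq:assum1_prim}: since $N/M\to\alpha$, one has $M^{-k}p_k^{(\rho_N)}=(N/M)^k\cdot N^{-k}p_k^{(\rho_N)}$ with \emph{deterministic} prefactor $(N/M)^k\to\alpha^k$. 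Hence the family $\bigl(M^{-k}p_k^{(\rho_N)}\bigr)_{k\ge1}$ is jointly asymptotically Gaussian as well, and each covariance picks up the factor $\lim(N/M)^{k_1+k_2}=\alpha^{k_1+k_2}$, exactly the prefactor in \eqref{eq:main-decreasing}.

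It then remains to substitute the explicit $\mathsf{F}$ and $\mathsf{G}$ into the integrand and simplify. The one algebraic identity that does all the work is $1-\beta+\beta(1+z)=1+\beta z$, which turns $(1+z)\ssp\mathsf{F}(1+z)$ into $\frac{(1+z)(1-\alpha)\beta}{\alpha\ssp(1+\beta z)}$ and $\mathsf{G}(1+z,1+w)$ into $\frac{(1-\alpha)\ssp\sigma^2}{(1+\beta z)^2(1+\beta w)^2}$; matching these against the two power factors and the bracketed term in \eqref{eq:main-decreasing} completes the identification, with the same contours $|z|=\varepsilon$, $|w|=2\varepsilon$ and $\varepsilon\ll1$.

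The only genuine technical point, rather than an obstacle, is to certify that the convergences in \Cref{prop:SGF-asym-old} hold \emph{uniformly} in a complex neighborhood of $1^k$, as the hypotheses of \Cref{th:BG2-one-level} demand. This is exactly where the standing assumption that $\BB_M$ has compact support on $\R_{>0}$ uniformly in $M$ is used: boundedness of $\bb$ makes each $1-\bb+x_i\ssp\bb$ bounded for $x_i$ near $1$, keeps the denominator $\operatorname{\mathbf{E}}_{\BB}\prod_i(1-\bb+x_i\ssp\bb)$ bounded away from $0$ near $1^k$ (it equals $1$ at $x=1^k$), and promotes the moment convergences \eqref{eq:assum1} to uniform convergence of the relevant logarithmic derivatives. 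With this uniformity secured, the remainder of the argument is the routine bookkeeping sketched above, and in particular requires none of the enhanced Schur-generating-function asymptotics of \Cref{subsec:Schur-generating-functions-intro}.
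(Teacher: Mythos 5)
Your proposal is correct and takes essentially the same approach as the paper: the paper's own proof consists precisely of the one-line remark that the theorem is a straightforward combination of \Cref{th:BG2-one-level} and \Cref{prop:SGF-asym-old}. Your additional details --- the specialization $S_{\rho_N}(x_1,\dots,x_k,1^{N-k})$, the deterministic rescaling $(N/M)^{k_1+k_2}\to\alpha^{k_1+k_2}$ producing the prefactor, the identity $1-\beta+\beta(1+z)=1+\beta z$, and the uniformity of convergence guaranteed by the uniformly compact support of $\BB_M$ --- simply make explicit what the paper leaves implicit.
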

\begin{proof}
This is a straightforward combination of \Cref{th:BG2-one-level} and \Cref{prop:SGF-asym-old}.
\end{proof}

One also has a multilevel version of \Cref{th:main-decreasing}:
\begin{theorem}
\label{th:main-decreasing-multilevel}
Let $k_1, k_2$ be two integers, and let $N_1 \le N_2$ be two sequences such that $N_1 / M \to \alpha_1$, $N_2 / M \to \alpha_2$, for $\alpha_1 < \alpha_2$. Then, the family of random variables
$\{ M^{-k} p_{k}^{(\rho_{N_1})}, M^{-l} p_{l}^{(\rho_{N_2})} \}_{k,l \in \mathbb{Z}_{\ge1}}$ is asymptotically Gaussian,
with the limiting covariance given by
\begin{multline}
\label{eq:main-decreasing-multilevel}
\lim_{N \to \infty} \frac{\operatorname{Cov}\bigl(p_{k_1}^{(\rho_{N_1})},\,p_{k_2}^{(\rho_{N_2})}\bigr)}{M^{k_1+k_2}}
= \frac{ \alpha_1^{k_1} \alpha_2^{k_2} }{(2\pi \mathbf{i})^2}
	\oint_{|z|=\varepsilon}
	\oint_{|w|=2\varepsilon}
	\left( \frac{1}{z}+1+\frac{(1+z)(1-\alpha_1)\ssp\beta}{\alpha_1\ssp (1-\beta+\beta(z+1))} \right)^{k_1}
	\\\times
	\left( \frac{1}{w}+1+\frac{(1+w)(1-\alpha_2)\ssp\beta}{\alpha_2\ssp (1-\beta+\beta(w+1))} \right)^{k_2}
	\left(
		\frac{(1-\alpha_2)\ssp \sigma^2}{(1+\beta z)^2(1+\beta w)^2}
		+\frac{1}{(z-w)^{2}}
	\right)
	\,dz\,dw.
\end{multline}
\end{theorem}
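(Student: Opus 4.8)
The plan is to imitate the proof of the single-level \Cref{th:main-decreasing}, replacing the one-level \Cref{th:BG2-one-level} by the multilevel Central Limit Theorem of \cite[Section~2.4]{bufetov2016fluctuations}. The two outer factors in \eqref{eq:main-decreasing-multilevel} are produced exactly as before: by \Cref{prop:SGF-asym-old}, at the level $N_a$ with $N_a/M\to\alpha_a$ the normalized logarithmic derivative of the annealed Schur generating function converges to $(\alpha_a^{-1}-1)\ssp\beta/(1-\beta+\beta x_1)$, which after the substitution $x_1=1+z$ produces the $\alpha_a$-dependent base $\left(\frac1z+1+\frac{(1+z)(1-\alpha_a)\ssp\beta}{\alpha_a\ssp(1-\beta+\beta(z+1))}\right)$ of the two outer factors. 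The genuinely new object is the cross-level covariance \emph{kernel}, and here the key subtlety is that the joint law of $(\lambda^{(N_1)},\lambda^{(N_2)})$ is no longer a Schur process once the environment is annealed, so the multilevel theorem of \cite{bufetov2016fluctuations} cannot be applied to a single joint Schur generating function; instead I would split the covariance by conditioning on the environment.

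To do this I would use the law of total covariance,
\begin{multline*}
\operatorname{Cov}\bigl(p_{k_1}^{(\rho_{N_1})},p_{k_2}^{(\rho_{N_2})}\bigr)
=\operatorname{\mathbf{E}}_{\BB}\bigl[\operatorname{Cov}\bigl(p_{k_1}^{(\rho_{N_1})},p_{k_2}^{(\rho_{N_2})}\mid\vec{\bb}\bigr)\bigr]
\\
+\operatorname{Cov}_{\BB}\bigl(\operatorname{\mathbf{E}}[p_{k_1}^{(\rho_{N_1})}\mid\vec{\bb}],\operatorname{\mathbf{E}}[p_{k_2}^{(\rho_{N_2})}\mid\vec{\bb}]\bigr).
\end{multline*}
For a frozen environment the model is an honest deterministic-weight Schur process, so the first term is handled by the deterministic multilevel CLT of \cite{bufetov2016fluctuations}; since \eqref{eq:assum1} forces $\bb_i\to\beta$ in probability and the support is uniformly compact, this term converges to the $\beta$-deterministic multilevel covariance, contributing precisely the intrinsic kernel $1/(z-w)^2$ together with the two outer factors above. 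For the second term, \Cref{prop:SGF-rand} shows that the quenched mean $\operatorname{\mathbf{E}}[p_k^{(\rho_N)}\mid\vec{\bb}]$ depends only on the weights $\bb_{N+1},\dots,\bb_M$, so the covariance of the two quenched means feels \emph{only the shared weights} $\bb_{N_2+1},\dots,\bb_M$. Linearizing in the fluctuations $\xi_j=\bb_j-\beta$ (a delta-method expansion whose derivatives $\pa_{\bb_j}\operatorname{\mathbf{E}}[p_k^{(\rho_N)}\mid\vec{\bb}]$ are read off from the quenched single-level LLN \Cref{theorem_moment_convergence}, exactly as in the computation behind \Cref{prop:SGF-asym-old}) and using $\operatorname{Var}(\xi_j)\sim\sigma^2/M$ together with the count $M-N_2\sim(1-\alpha_2)M$ of shared weights, I expect this term to reproduce the factor $(1-\alpha_2)\ssp\sigma^2$ and the kernel $\bigl((1-\beta+\beta x_1)(1-\beta+\beta x_2)\bigr)^{-2}=\bigl((1+\beta z)(1+\beta w)\bigr)^{-2}$, i.e.\ the remaining term of \eqref{eq:main-decreasing-multilevel}.

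Joint asymptotic Gaussianity of the whole family $\{M^{-k}p_k^{(\rho_{N_1})},M^{-l}p_l^{(\rho_{N_2})}\}$ would then follow by arguing that the two contributions are asymptotically independent Gaussians: conditionally on the environment the tiling fluctuations are jointly Gaussian by the multilevel CLT of \cite{bufetov2016fluctuations} and concentrate around their quenched means, while the quenched means are asymptotically Gaussian functionals of the i.i.d.\ weights by the classical multivariate Central Limit Theorem. Summing two independent Gaussian vectors yields a Gaussian vector whose covariance is the sum of the two kernels, which is \eqref{eq:main-decreasing-multilevel}.

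I expect the main obstacle to be the second (environment) term and, above all, the \emph{joint} statement. One must justify the linearization of the quenched means uniformly over the typical environment and control the higher-order remainder; more seriously, one must upgrade the separate computation of the two covariance contributions to genuine joint convergence and asymptotic independence of the tiling and environment fluctuations. The fact that the annealed joint law is not a Schur process is precisely what makes the conditioning argument, rather than a one-line application of the multilevel theorem, unavoidable here.
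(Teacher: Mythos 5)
The central premise of your proposal is false, and it is exactly where you part ways with the paper. You claim that the multilevel theorem of \cite{bufetov2016fluctuations} ``cannot be applied to a single joint Schur generating function'' because the annealed law of $(\lambda^{(N_1)},\lambda^{(N_2)})$ is no longer a Schur process. But \cite[Theorem~2.11]{bufetov2016fluctuations} applies to general multilevel distributions on signatures: what it requires is not Schur-process structure, but that the two-level law be determined by a top-level Schur generating function together with a transition kernel acting on normalized Schur functions as multiplication by a symmetric function followed by specialization of variables to $1$. The annealed law retains exactly this structure, with explicit data. Indeed, the quenched kernel from level $N_2$ down to level $N_1$ (a composition of the $\kappa_{\bb_j}$'s and $\pr$'s from \Cref{subsec:comb}) involves only $\bb_{N_1+1},\dots,\bb_{N_2}$, while the quenched marginal of $\lambda^{(N_2)}$ involves only $\bb_{N_2+1},\dots,\bb_M$; since these two groups of weights are independent, averaging over the environment factorizes, and, in complete analogy with \Cref{prop:SGF-rand}, the annealed transition function and the annealed top-level Schur generating function are
\begin{equation*}
\Bigl(\operatorname{\mathbf{E}}_{\BB}\prod_{i=1}^{N_1}\bigl(1-\bb+x_i\ssp\bb\bigr)\Bigr)^{N_2-N_1}
\qquad\text{and}\qquad
\Bigl(\operatorname{\mathbf{E}}_{\BB}\prod_{i=1}^{N_2}\bigl(1-\bb+x_i\ssp\bb\bigr)\Bigr)^{M-N_2}.
\end{equation*}
Feeding the asymptotics of these functions, computed exactly as in \Cref{prop:SGF-asym-old}, into \cite[Theorem~2.11]{bufetov2016fluctuations} gives \eqref{eq:main-decreasing-multilevel}; note that the cross-level mixed log-derivative sees only the second factor, which is precisely where the coefficient $(1-\alpha_2)\ssp\sigma^2=\min(1-\alpha_1,1-\alpha_2)\ssp\sigma^2$ comes from. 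This short argument --- the ``one-line application of the multilevel theorem'' you declare impossible --- is the paper's proof.

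That said, your decomposition via the law of total covariance is internally consistent and would reproduce the correct answer: quenched, the Schur generating function is $\prod_{i}\prod_j(1-\bb_j+x_i\ssp\bb_j)$, whose mixed second log-derivatives vanish identically, so the quenched multilevel CLT contributes only the $(z-w)^{-2}$ kernel; and by the tower property the covariance of the quenched means passes only through the shared weights $\bb_j$, $j>N_2$, giving the $(1-\alpha_2)\ssp\sigma^2$ term. But as written it is a program rather than a proof: you would still need (i) a remainder bound for the linearization of $\operatorname{\mathbf{E}}\bigl[p_k^{(\rho_N)}\mid\vec{\bb}\bigr]$ in the weights, uniform over typical environments; (ii) uniform integrability allowing the exchange of $\operatorname{\mathbf{E}}_{\BB}$ with the quenched limits, which hold only in probability over environments; and (iii) the joint Gaussianity/asymptotic-independence statement for the entire family, carried out in the sense of moments, since that is the mode of convergence in every cited result. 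You flag (iii) as the main obstacle but do not resolve it. So the proposal rests on a mistaken reading of the scope of the key cited theorem, and the substitute route it proposes leaves its hardest steps open.
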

\begin{proof}
The extension of \Cref{th:main-decreasing} to the multilevel
setting follows the same steps as the passage from one-level
to multilevel results in \cite{bufetov2016fluctuations}.
That paper introduces multilevel Schur generating functions;
for the Schur process these functions can be evaluated
explicitly, and with random parameters the computation is
analogous to \Cref{prop:SGF-rand}. Finally, instead of
\Cref{th:BG2-one-level} we invoke its multilevel analogue,
\cite[Theorem 2.11]{bufetov2016fluctuations}, which applies
to multilevel distributions on signatures.
We omit further details of the proof.
\end{proof}

\subsection{Limiting covariance: Gaussian Free Field plus Brownian motion}
\label{sub:covariance-structure-decreasing-variance}

Let us interpret the covariance structure \eqref{eq:main-decreasing-multilevel} obtained in \Cref{th:main-decreasing-multilevel}.
We can represent the integral as a sum of two terms, corresponding to the two summands
\begin{equation}
\label{eq:covariance-structure-decreasing-variance-two-terms}
		\frac{(1-\alpha_2)\ssp \sigma^2}{(1+\beta z)^2(1+\beta w)^2}
		+\frac{1}{(z-w)^{2}}
\end{equation}
in one of the brackets.
The second term already appears in the case of deterministic
edge weights. As shown in \cite[Section~9]{bufetov2016fluctuations}, this term gives rise to the
\emph{Gaussian Free Field} (GFF). That is, in an appropriate complex
structure determined by the limit shape, the contour
integral expression can be transformed into an expression
coming from the GFF.

In more detail, if we map the liquid region inside the Aztec diamond
(i.e., the region bounded by the arctic ellipse inside of which one sees a random mixture of dominos of all types,
cf.~\Cref{fig:uniform_200})
to the complex upper half-plane $\mathbb{H}\coloneqq \{ z \in \mathbb{C} : \Im(z) > 0 \}$
via a suitable diffeomorphism,\footnote{Which is 
deterministic, that is, does not depend on our random edge weights.}
then
the covariance of fluctuations at points $z,w \in
\mathbb{H}$ will be given by
$$
\frac{-1}{2 \pi^2} \log \left( \frac{z-w}{z - \overline{w}} \right),
$$
which is a scalar multiple of the Green function of the Laplace operator in $\mathbb{H}$ with Dirichlet boundary conditions.

The term with the first summand in \eqref{eq:covariance-structure-decreasing-variance-two-terms} is new.
It
appears because of the randomness coming from the edge weights.
Under the same diffeomorphism of the liquid region onto $\mathbb{H}$,
we obtain the following covariance term in the GFF complex structure:
$$
\frac{4 \Im (z) \Im (w) \min \left( 1- \alpha_1, 1-\alpha_2 \right) }{ |1 + \beta z|^2\ssp |1+\beta w|^2},\qquad
z,w \in \mathbb{H}.
$$
One can verify that this fluctuation term is interpreted as
coming from
(a deterministic pushforward of) the one-dimensional Brownian motion.
Note that in \Cref{th:main-decreasing-multilevel} we assumed that $\alpha_1<\alpha_2$,
but since the left-hand side of \eqref{eq:main-decreasing-multilevel} is symmetric in $\alpha_1$ and $\alpha_2$,
we can write
$1-\alpha_2=\min \left( 1- \alpha_1, 1-\alpha_2 \right)$ in order to emphasize the Brownian motion-like covariance structure.

To summarize, it is natural to interpret the additional
Brownian motion appearing in the covariance as arising from
the limiting fluctuations of the i.i.d.\ parameters
$(\bb_1,\dots,\bb_M)$. At the same time, tuning the variance of the edge weights
as $\sim 1/M$, the fluctuation structure still retains the Gaussian Free Field
component, which is the same as in the case of deterministic edge weights.

\section{Law of Large Numbers}
\label{sec:LLN}

In this section we prove the law of large numbers for
discrete particle systems determined by their Schur
generating functions. Our result vastly generalizes
the one from \cite{GorinBufetov2013free}
(formulated as \Cref{theorem_moment_convergence}
above).
We require this
generalization in order to analyze domino tilings of the Aztec
diamond with random edge weights of a fixed distribution
(as opposed to the decreasing variance case in \Cref{sec:decreasing-variance}).
However, this law of large numbers is certainly of an independent interest.

For the proof in this and the next section, we use the
method of Schur generating functions that was developed in \cite{GorinBufetov2013free},
\cite{bufetov2016fluctuations}. While we recall all the
necessary steps, we focus primarily on new ideas and
computations. We refer to \cite{bufetov2016fluctuations} and
\cite{BufetovZografos2024} for a more detailed discussion of
the basics of the method. Preliminary definitions
and results on Schur generating functions from \cite{GorinBufetov2013free}, \cite{bufetov2016fluctuations}
are given in \Cref{subsec:SGF,subsec:Schur-generating-functions-known-results} above.

\subsection{Symmetrization lemmas}

First, we recall a symmetrization result from \cite{GorinBufetov2013free}.
Define $P \coloneqq \{ (a, b) \in
\mathbb{Z}_{\ge 1}^2 \of 1 \leq a < b \leq n \}$.
\begin{lemma}[{\cite[Lemma~5.4]{GorinBufetov2013free}}]
  \label{lem:lemma1} Let $n$ be a positive integer. Let $f (z_1, \ldots,
  z_n)$ be a function. We consider its symmetrization with respect to $P$:
  \[ f_P (z_1, \ldots, z_n) \coloneqq \frac{1}{n!} \sum_{\pi\in S_n}
	\frac{f (z_{\pi (1)}, \ldots, z_{\pi (n)})}{\prod_{(a, b) \in P} (z_{\pi_{(a)}} -
     z_{\pi_{(b)}})}  \]
	(where $S_n$ is the symmetric group).
  Then:
\begin{enumerate}[$\bullet$]
\item If $f$ is analytic in a complex neighborhood of $1^n$,
then $f_P$ is also analytic in a complex neighborhood of
$1^n$.
\item If $(f^{(m)}(z_1,\ldots,z_n))_{m\ge1}$ is a
sequence of analytic functions converging to zero uniformly
in a complex neighborhood of $1^n$, then so is the sequence
$(f_P^{(m)})_{m\ge1}$.
\end{enumerate}
\end{lemma}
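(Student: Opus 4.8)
The plan is to prove both assertions directly from the symmetry of $f_P$, \emph{without} trying to recognize the denominator $\prod_{(a,b)\in P}(z_{\pi(a)}-z_{\pi(b)})$ as a signed copy of the full Vandermonde determinant; such a factorization is special to the set of all pairs and would not survive for a general index set $P$, so I avoid relying on it. First I would observe that $f_P$ is a symmetric function of $z_1,\dots,z_n$: substituting $z_i\mapsto z_{\tau(i)}$ for $\tau\in S_n$ and reindexing the summation by $\sigma=\tau\pi$ returns the defining sum unchanged. Away from the diagonals, i.e.\ on the complement of $\bigcup_{i<j}\{z_i=z_j\}$, the function $f_P$ is visibly holomorphic, being a finite sum of ratios of holomorphic functions with nonvanishing denominators, so every singularity must lie on some diagonal.

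Next, and this is the heart of the matter, I would show that $f_P$ extends holomorphically across a \emph{generic} point $p_0$ of a diagonal $\{z_i=z_j\}$, meaning a point at which no further coordinates coincide. Because $\pi$ is a bijection, for each fixed $\pi$ there is at most one pair $(a,b)\in P$ with $\{\pi(a),\pi(b)\}=\{i,j\}$, so each summand contributes at most one factor $(z_i-z_j)$ to its denominator; hence near $p_0$ one may write $f_P=\dfrac{A}{z_i-z_j}+B$ with $A,B$ holomorphic and the residue $A$ independent of $z_i-z_j$. Applying the transposition $\tau$ that swaps $z_i$ and $z_j$ and using $f_P\circ\tau=f_P$ together with $z_i-z_j\mapsto-(z_i-z_j)$ (while $A\circ\tau=A$), a comparison of the coefficients of $1/(z_i-z_j)$ forces $A=-A$, whence $A\equiv0$ and $f_P=B$ is holomorphic at $p_0$. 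Consequently $f_P$ is holomorphic off the locus $S$ on which two coordinates coincide in two distinct ways (two separate equal pairs, or three coordinates equal at once).

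The locus $S$ is a finite union of linear subspaces each of codimension $\ge2$, hence an analytic set of codimension $\ge2$; by the second Riemann extension (Hartogs) theorem, a function holomorphic on the complement of such a set extends holomorphically across it, with no boundedness hypothesis required. This gives holomorphy of $f_P$ in a full neighborhood of $1^n$ and proves the first bullet. For the second bullet I would neutralize the apparent division by small quantities by choosing a Cauchy contour that avoids the diagonals: fix radii $0<r_1<\dots<r_n$ small enough that the closed polydisk $\overline D=\{|z_i-1|\le r_i\}$ together with all coordinate permutations of its points lies in the common neighborhood on which the $f^{(m)}$ are holomorphic. On the distinguished boundary torus $T=\{|w_i-1|=r_i\}$ one has $|w_i-w_j|\ge|r_i-r_j|\ge\delta>0$, so every denominator factor is bounded below by $\delta$ and $\sup_T|f_P^{(m)}|\le\delta^{-|P|}\sup|f^{(m)}|\to0$. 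Feeding this into the polydisk Cauchy formula $f_P^{(m)}(z)=\frac{1}{(2\pi\ii)^n}\oint_T \frac{f_P^{(m)}(w)}{\prod_i(w_i-z_i)}\,dw$ (legitimate since $f_P^{(m)}$ is holomorphic on $\overline D$ by the first part) propagates the bound to any strictly smaller polydisk, giving $f_P^{(m)}\to0$ uniformly near $1^n$.

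I expect the pole-cancellation step to be the main obstacle: for an arbitrary $P$ the denominators of different summands no longer differ merely by an overall sign, so the vanishing of the residue along each diagonal cannot be read off from an antisymmetrization and must instead be deduced from the symmetry of $f_P$ itself, as above. Once holomorphy off a codimension-$2$ set is secured, the Hartogs extension and the Cauchy-contour estimate are routine and deliver both conclusions.
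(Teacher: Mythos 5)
Your proof is correct, but it takes a genuinely different route from the one behind this statement. Note first that the paper itself does not prove the lemma --- it quotes it from \cite{GorinBufetov2013free}, where the argument is algebraic: since in this paper $P$ is the set of \emph{all} pairs $1\le a<b\le n$, each denominator satisfies $\prod_{(a,b)\in P}(z_{\pi(a)}-z_{\pi(b)})=\operatorname{sgn}(\pi)\prod_{a<b}(z_a-z_b)$, so that $f_P=\bigl(n!\,\prod_{a<b}(z_a-z_b)\bigr)^{-1}\sum_{\pi\in S_n}\operatorname{sgn}(\pi)\,f(z_{\pi(1)},\ldots,z_{\pi(n)})$; the numerator is a skew-symmetric analytic function, hence divisible by the Vandermonde with analytic quotient, which gives the first bullet in one stroke, and the second bullet then follows by essentially the same distinguished-boundary Cauchy estimate you use. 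Your stated reason for avoiding this factorization --- that it ``would not survive for a general index set $P$'' --- is doubly moot: here $P$ is exactly the full set of pairs, and even for a proper subset one recovers the Vandermonde argument by multiplying the numerator and denominator of each summand by the missing factors $\prod_{(a,b)\notin P}(z_{\pi(a)}-z_{\pi(b)})$, which keeps the numerator skew-symmetric. What your route buys instead is a self-contained local-analytic proof that never invokes divisibility of skew-symmetric functions by the Vandermonde: symmetry of $f_P$ under $S_n$; at a generic point of a diagonal $\{z_i=z_j\}$ each summand carries exactly one vanishing factor (since $\pi$ is a bijection), so the pole is simple and its residue $A$ satisfies $A=-A$ under the transposition $z_i\leftrightarrow z_j$, hence vanishes; and the remaining double-coincidence locus has codimension two, so Riemann's second removable-singularity theorem (no boundedness needed) finishes the extension. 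All of these steps check out, as does your quantitative step: on the torus with distinct radii $r_1<\cdots<r_n$ one indeed has $|w_i-w_j|\ge|r_i-r_j|\ge\delta$, giving $\sup_T|f_P^{(m)}|\le\delta^{-|P|}\sup|f^{(m)}|\to0$, and the polydisc Cauchy formula (legitimate once the first bullet gives holomorphy of $f_P^{(m)}$ on the closed polydisc, which your permutation-invariance precaution ensures) propagates the bound inward. The trade-off: your argument is more robust and generalizes verbatim to arbitrary $P$, but it spends Hartogs-type machinery on a fact that the cited source dispatches with a one-line divisibility observation.
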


We need a new symmetrization identity 
which we were not able to locate in the literature.
Throughout the rest of the paper, we will use the notation
\begin{equation*}
	w_m \coloneqq \exp \left( \frac{2 \pi \sqrt{-1}}{m} \right)
\end{equation*}
for the $m$-th root of unity.

\begin{lemma}
\label{lem:main-sym}
	Let $G(u_1, u_2, \dots, u_{m+1})$ be a complex-analytic
	function in a neighborhood of $(0^{m+1})$, which is
	symmetric in $u_2, \dots, u_{m+1}$ (but not necessarily in
	$u_1$). We have
	\begin{equation}
	\label{eq:basic-sym}
\sum_{i=1}^{m+1} \frac{ G( u_i, u_1, u_2, \dots, \widehat{u_i}, \dots, u_{m+1} )}
 { \prod_{j=1 ;\ssp j \neq i}^{m+1} (u_i - u_j) } \bigg\vert_{u_1=\dots =u_{m+1}=0}
 =
 \frac{1}{m!}\ssp \pa_u^m \bigl[ G(u, u w_{m+1},
 \dots, u w_{m+1}^m) \bigr] \Big\vert_{u=0}.
	\end{equation}
\end{lemma}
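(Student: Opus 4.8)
The plan is to exploit linearity of both sides in $G$ together with a homogeneity count, and then to evaluate the (regularized) left-hand side at a cleverly chosen point rather than on the diagonal. First I would record that both sides of \eqref{eq:basic-sym} are linear in $G$, so expanding $G$ into its homogeneous Taylor components at $0^{m+1}$ reduces the problem to a $G$ that is homogeneous of some degree $d$ (and still symmetric in $u_2,\dots,u_{m+1}$). Write
\[
	S(u_1,\dots,u_{m+1}) \coloneqq \sum_{i=1}^{m+1} \frac{ G(u_i, u_1,\dots,\widehat{u_i},\dots,u_{m+1})}{\prod_{j\neq i}(u_i-u_j)}
\]
for the summand before setting the variables to $0$. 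A short check shows $S$ is symmetric in all $m+1$ variables, and for $G$ homogeneous of degree $d$ it is homogeneous of degree $d-m$. The crucial regularity input is that $S$ is in fact \emph{analytic} near $0^{m+1}$: writing $\prod_{j\neq i}(u_i-u_j) = (-1)^{i-1} V/V_{\widehat i}$, where $V = \prod_{a<b}(u_a-u_b)$ and $V_{\widehat i}$ is the Vandermonde of the variables with $u_i$ removed, one gets $S = N/V$ with $N$ analytic and antisymmetric; since an antisymmetric analytic function is divisible by $V$ with analytic quotient, $S$ is analytic. (This regularity is exactly of the type supplied by \Cref{lem:lemma1}.)

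Granted analyticity, the homogeneity count finishes the reduction: evaluating the analytic function $S$ at $0^{m+1}$ kills every homogeneous component except the one of degree $0$, i.e.\ the contribution of the degree-$m$ part of $G$, while the right-hand side, being $\frac{1}{m!}\pa_u^m[u^d\,G(1,w_{m+1},\dots,w_{m+1}^m)]\big\vert_{u=0}$, likewise vanishes unless $d=m$. Hence I may assume $G$ homogeneous of degree exactly $m$, so that $S$ is analytic and homogeneous of degree $0$, and therefore \emph{constant}. The whole point of this constancy is that I am then free to evaluate $S$ at any admissible point; I would choose $u_k = w_{m+1}^{\,k-1}$, the $(m+1)$-st roots of unity, where all denominators are nonzero and the diagonal singularity is avoided.

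At this point the computation is explicit. Using $\prod_{k=0}^{m}(x-w_{m+1}^k)=x^{m+1}-1$ and differentiating at $x=w_{m+1}^{i-1}$, the denominator of the $i$-th term equals $\prod_{j\neq i}(w_{m+1}^{i-1}-w_{m+1}^{j-1}) = (m+1)\,w_{m+1}^{(i-1)m}$. For the numerator I would factor $w_{m+1}^{i-1}$ out of all $m+1$ arguments (degree-$m$ homogeneity), then use symmetry of $G$ in its last $m$ slots together with $\{w_{m+1}^{\,j-i}: j\neq i\} = \{w_{m+1},\dots,w_{m+1}^{m}\}$ to obtain $G(w_{m+1}^{i-1}, \{w_{m+1}^{j-1}\}_{j\neq i}) = w_{m+1}^{(i-1)m}\,G(1,w_{m+1},\dots,w_{m+1}^m)$. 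The powers of $w_{m+1}$ cancel, so each of the $m+1$ terms equals $\tfrac{1}{m+1}G(1,w_{m+1},\dots,w_{m+1}^m)$ and the sum is $G(1,w_{m+1},\dots,w_{m+1}^m)$. This matches the right-hand side, which for homogeneous degree-$m$ $G$ equals $\frac{1}{m!}\pa_u^m[u^m]\big\vert_{u=0}\cdot G(1,w_{m+1},\dots,w_{m+1}^m) = G(1,w_{m+1},\dots,w_{m+1}^m)$.

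The main obstacle is the analyticity (equivalently, pole cancellation) of the symmetric sum $S$ across the diagonals $u_i=u_j$: this is what assigns meaning to the substitution $\big\vert_{u_1=\dots=u_{m+1}=0}$ and, for $G$ homogeneous of degree $m$, upgrades ``homogeneous of degree $0$'' to ``constant.'' Everything after that is a direct root-of-unity evaluation, and it is precisely constancy that lets me slide the evaluation point off the diagonal, onto the roots of unity, where the identity becomes transparent.
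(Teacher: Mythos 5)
Your proof is correct, and it reaches the identity by a genuinely different route than the paper, although both hinge on the same key trick: multiplying the tuple $(1, w_{m+1}, \dots, w_{m+1}^m)$ by $w_{m+1}^{i-1}$ only permutes its entries, so the partial symmetry of $G$ makes the sum tractable along root-of-unity configurations. The paper's proof keeps $G$ general: it invokes \Cref{lem:lemma1} to know the left-hand side has a limit, restricts to the curve $u_i = u\ssp w_{m+1}^{i-1}$, observes that each numerator becomes $\widetilde F(u_i)$ for the single-variable function $\widetilde F(u) = G(u, u w_{m+1}, \dots, u w_{m+1}^m)$, and then cites the standard univariate divided-difference/symmetrization lemma to get $\frac{1}{m!}\widetilde F^{(m)}(0)$ as $u\to0$. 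You instead decompose $G$ into homogeneous components, kill every degree $d\neq m$ on both sides by degree counting (using that the symmetrized sum $S_{G_d}$ is a polynomial of homogeneous degree $d-m$, hence identically zero for $d<m$ and vanishing at the origin for $d>m$), and for $d=m$ upgrade analyticity plus degree-$0$ homogeneity to constancy, which lets you evaluate at the single off-diagonal point $(1, w_{m+1},\dots,w_{m+1}^m)$ and finish with the explicit computation $\prod_{j\neq i}(w_{m+1}^{i-1}-w_{m+1}^{j-1}) = (m+1)\ssp w_{m+1}^{(i-1)m}$ coming from $x^{m+1}-1$. What each approach buys: the paper's argument is shorter because it outsources the coalescing-points step to a known lemma; yours is self-contained, avoids divided differences entirely, and makes transparent why the right-hand side only sees the degree-$m$ part of $G$. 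The one step you should spell out is the interchange of the infinite sum over homogeneous components with evaluation at $0^{m+1}$: you need that $\sum_{d\ge m} S_{G_d}$ converges to $S_G$ in a full neighborhood of the origin (not just off the diagonal), which follows, e.g., from a radial limit $S_G(tP)=\sum_{d\ge m}t^{d-m}S_{G_d}(P)$ along a fixed off-diagonal point $P$ with Cauchy estimates on the components $G_d$, or from the maximum principle on polydiscs (the distinguished boundary meets the diagonal in a thin set). This is routine but is genuinely the place where the reduction to homogeneous $G$ earns its keep.
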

\begin{proof}
	By \Cref{lem:lemma1}, the limit in the left-hand side
	of \eqref{eq:basic-sym} exists. In order to compute it, we
	chose $u_i = u w_{m+1}^{i-1}$, $i=1,2, \dots, m+1$, and
	let $u$ tend to zero. Define also
	$$
	\widetilde F (u)\coloneqq G(u, u w_{m+1}, \dots, u w_{m+1}^m).
	$$
	For our choice of $u_i$'s, one has
	$$
	\widetilde F (u_i) = G( u_i, u_1, u_2, \dots, \widehat{u_i}, \dots, u_{m+1} ),
	$$
	due to the symmetry of $G$ with respect to all variables except of the first one, and the fact that $w_{m+1}$ is a root of unity.
	Therefore,
	\begin{equation}
\label{eq:basic-sym-2}
\sum_{i=1}^{m+1} \frac{ G( u_i, u_1, u_2, \dots,
\widehat{u_i}, \dots, u_{m+1} )}
{ \prod_{j=1 ; j \neq
i}^{m+1} (u_i - u_j) } \bigg\vert_{u_1= \dots= u_{m+1}=0}
=
\sum_{i=1}^{m+1} \frac{ \widetilde F (u_i) }{ \prod_{j=1 ; j \neq i}^{m+1} (u_i - u_j) } \bigg\vert_{u_1= \dots= u_{m+1}=0}.
\end{equation}
	The symmetrization in the right-hand side of \eqref{eq:basic-sym-2} is
	now standard. It is well-known
	(see, e.g., \cite[Lemma~5.5]{GorinBufetov2013free})
	that this right-hand side is equal to $\frac{1}{m!} \left. \pa_u^m \widetilde F (u) \right\vert_{u=0}$. This completes the proof.
\end{proof}

The following corollary is obtained immediately from \Cref{lem:main-sym} via a shift of variables.
\begin{corollary}
Let $G(u_1, u_2, \dots, u_{m+1})$ be a complex-analytic
function in a neighborhood of $(1^{m+1})$, which is
symmetric in $u_2, \dots, u_{m+1}$ (but not necessarily in
$u_1$). Then
\begin{multline}
\label{eq:corol-sym}
\sum_{i=1}^{m+1}  \frac{ G( u_i, u_1, u_2, \dots, \hat u_i, \dots, u_{m+1} )}{ \prod_{j=1 ; j \neq i}^{m+1} (u_i - u_j) } \bigg\vert_{u_1= \dots= u_{m+1}=1}
\\ = \frac{1}{m!}\ssp \pa_u^m \left[ G(1+u, 1+u w_{m+1}, \dots, 1+u w_{m+1}^m) \right] \Big\vert_{u=0}.
\end{multline}

\end{corollary}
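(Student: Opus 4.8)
The plan is to reduce the corollary directly to \Cref{lem:main-sym} by an affine translation of all the variables by $1$. First I would introduce the shifted function $\widetilde G(v_1,\dots,v_{m+1}) \coloneqq G(1+v_1,\dots,1+v_{m+1})$, which is complex-analytic in a neighborhood of $(0^{m+1})$ precisely because $G$ is analytic near $(1^{m+1})$, and which inherits the symmetry of $G$ in its last $m$ arguments $v_2,\dots,v_{m+1}$. Hence $\widetilde G$ satisfies the hypotheses of \Cref{lem:main-sym}.

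Next I would apply \Cref{lem:main-sym} to $\widetilde G$ and rewrite both sides in terms of the original variables $u_j = 1 + v_j$. The key observation, and the only point requiring any attention, is that the pairwise differences are translation-invariant: $u_i - u_j = v_i - v_j$, so the Vandermonde-type denominators $\prod_{j \neq i}(u_i - u_j)$ are unchanged by the shift. Evaluating at $v_1 = \dots = v_{m+1} = 0$ corresponds exactly to evaluating at $u_1 = \dots = u_{m+1} = 1$, and since $\widetilde G(v_i, v_1, \dots, \widehat{v_i}, \dots, v_{m+1}) = G(u_i, u_1, \dots, \widehat{u_i}, \dots, u_{m+1})$, the left-hand side of \Cref{lem:main-sym} becomes exactly the left-hand side of \eqref{eq:corol-sym}.

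Finally, for the right-hand side I would note that $\widetilde G(u, u w_{m+1}, \dots, u w_{m+1}^m) = G(1+u, 1+u w_{m+1}, \dots, 1+u w_{m+1}^m)$, so the quantity $\frac{1}{m!}\ssp\pa_u^m\bigl[\widetilde G(u, u w_{m+1}, \dots, u w_{m+1}^m)\bigr]\big\vert_{u=0}$ produced by \Cref{lem:main-sym} coincides verbatim with $\frac{1}{m!}\ssp\pa_u^m\bigl[G(1+u, 1+u w_{m+1}, \dots, 1+u w_{m+1}^m)\bigr]\big\vert_{u=0}$, which is the right-hand side of \eqref{eq:corol-sym}. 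I do not expect any genuine obstacle here: the entire content is the translation invariance of the difference factors together with the fact that both analyticity and partial symmetry are preserved under an affine shift of coordinates, so the corollary follows immediately once the shifted function is set up correctly.
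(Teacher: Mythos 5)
Your proposal is correct and matches the paper's approach exactly: the paper derives this corollary from \Cref{lem:main-sym} ``via a shift of variables,'' which is precisely the substitution $u_j = 1 + v_j$ you carry out, relying on the translation invariance of the difference factors $u_i - u_j$ and the preservation of analyticity and partial symmetry under the shift. Your write-up simply makes explicit the details the paper leaves to the reader.
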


The next observation will be important for simplifying formulas in applications:
\begin{lemma}
\label{lem:roots-simplifying-sym}
Let $P(u_1, u_2, \dots, u_{m})$ be a constant-free polynomial of degree strictly less than $m$. Then
$P(1,w_m, w_m^2, \dots, w_{m}^{m-1})=0$.
\end{lemma}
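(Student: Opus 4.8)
The plan is to reduce everything to the single algebraic fact that $1, w_m, w_m^2, \dots, w_m^{m-1}$ are exactly the $m$ roots of $x^m-1$. Indeed, $\prod_{j=0}^{m-1}(x-w_m^j)=x^m-1$, so comparing coefficients shows that the elementary symmetric polynomials satisfy
\[
e_1(1,w_m,\dots,w_m^{m-1})=\dots=e_{m-1}(1,w_m,\dots,w_m^{m-1})=0,
\]
while only $e_m=\prod_{j} w_m^{\,j}=(-1)^{m-1}$ survives. Since the polynomials produced by the symmetrization identity \eqref{eq:corol-sym} are symmetric in their arguments, I treat $P$ as a symmetric polynomial (this is the setting in which the lemma is used; for a genuinely non-symmetric $P$ the conclusion already fails, as $P=u_1$ at $m=2$ shows, where $P(1,-1)=1\neq0$).

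First I would invoke the fundamental theorem of symmetric polynomials to write $P=Q(e_1,\dots,e_m)$ for some polynomial $Q$ with no constant term (the latter because $P$ is \emph{constant-free} and every $e_i$ vanishes at the origin). Next I would use the degree hypothesis: since $\deg e_i=i$ and $\deg P<m$, the generator $e_m$ cannot occur in $Q$, so in fact $P=Q(e_1,\dots,e_{m-1})$. Because $Q$ has no constant term, every monomial of $Q$ carries at least one factor $e_i$ with $1\le i\le m-1$. Evaluating at $(1,w_m,\dots,w_m^{m-1})$ and using the vanishing of $e_1,\dots,e_{m-1}$ displayed above, each such monomial dies, and therefore $P(1,w_m,\dots,w_m^{m-1})=0$.

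The step I would be most careful about is the interplay of the two hypotheses, which is exactly what makes the statement sharp rather than the main difficulty: the bound $\deg P<m$ is precisely what prevents $e_m$ — the one nonvanishing symmetric generator — from entering, and \emph{constant-free} is precisely what removes the degree-$0$ monomial. Dropping either makes the conclusion false, as $P=e_m$ (degree $m$, value $(-1)^{m-1}$) and $P=1$ (not constant-free) demonstrate. An equivalent and perhaps slicker route, which I would note as an alternative, is to expand $P$ in the power sums $p_k=\sum_j u_j^{\,k}$ rather than the $e_i$: since $\sum_{j=0}^{m-1} w_m^{\,jk}=0$ for $1\le k\le m-1$, the same degree-and-constant bookkeeping (via Newton's identities, which express $P$ as a constant-free polynomial in $p_1,\dots,p_{m-1}$) gives the vanishing directly.
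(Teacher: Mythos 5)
Your proof is correct, and its main argument takes a genuinely different (though closely parallel) route from the paper's; in fact, the paper's own proof is precisely the alternative you sketch in your final sentences. The paper expands $P$ as a linear combination of products of power sums and uses that $p_k(1,w_m,\dots,w_m^{m-1})=\sum_{i=0}^{m-1}w_m^{ik}=0$ for all $1\le k<m$; the degree bound ensures that only such $p_k$ can occur in the expansion, and constant-freeness ensures every term contains at least one of them, so the whole expression vanishes. Your main argument instead runs through the elementary symmetric polynomials and the fundamental theorem of symmetric polynomials, which forces one extra (easy but necessary) step: excluding $e_m$ via the weighted-degree bookkeeping, since $e_m$ is the one generator that does \emph{not} vanish at the roots of unity, whereas in the power-sum basis every generator of degree less than $m$ vanishes and no exclusion is needed. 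What your write-up buys beyond the paper's two-line proof: first, it makes explicit that the lemma requires $P$ to be \emph{symmetric} --- the paper's statement omits this word, although its proof (expansion in power sums) and all of its applications (e.g., to the symmetric function $\prod_{i=1}^k(1-\bb+\bb u_i)$ in \Cref{LLN:tilings-fixed}) assume it, and as you observe the statement is false without it ($P=u_1$, $m=2$); second, your counterexamples $P=e_m$ and $P=1$ show that both hypotheses are sharp, a point the paper does not address.
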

\begin{proof}
Power sums of degree less than $m$ evaluated at
all $m$-th roots of unity vanish,
that is, $\sum_{i=0}^{m-1}w_m^{i\cdot k}=0$ for all $1\le k<m$.
Any polynomial $P$ as in the hypothesis can be expressed,
as a linear combination of products of such
power sums. This completes the proof.
\end{proof}

\subsection{Law of Large Numbers}
\label{subsec:LLN_general_setting}

Let $\rho_N$ be a probability measure on $\GT_N$. Recall that our main object of interest is the asymptotic behavior of the empirical measure
$m[\rho_N]$ defined by \eqref{eq:random-measure-m-rho-definition}.
The main technical role in our proofs is played (as in
\cite{GorinBufetov2013free}, \cite{bufetov2016fluctuations})
by differential operators acting on analytic functions of
$N$ variables,
\[ \bigl(\mathcal{D}_k f\bigr) (u_1, \ldots, u_N) \coloneqq \frac{1}{V_N (\vec{u})}
\sum_{i = 1}^N (u_i \partial_i)^k [V_N (\vec{u}) f (u_1, \ldots, u_N)], \]
where $V_N (\vec{u}) \coloneqq \prod_{1 \leq i < j \leq N} (u_i - u_j)$
is the Vandermonde determinant,
and $\partial_i$ denotes the derivative with respect to $u_i$.
They are useful because they act diagonally on the Schur functions:
\begin{proposition}
\label{prop:eigen}
  For $\lambda \in \GT_N$, we have
  \[
	\mathcal{D}_k\ssp s_{\lambda}
	=
	s_{\lambda}\cdot
	\sum_{i = 1}^N (\lambda_i + N - i)^k
   .
	\]
\end{proposition}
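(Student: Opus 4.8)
The plan is to exploit the factorization $V_N(\vec u)\ssp s_\lambda = \det[u_i^{\lambda_j+N-j}]$ coming directly from \eqref{eq:Schur-function-def}, so that the conjugation by $V_N$ built into $\mathcal{D}_k$ simply strips off the Vandermonde and reduces the claim to the action of $\sum_{i} (u_i\partial_i)^k$ on the numerator alternant. Concretely, writing $a_\lambda(\vec u)\coloneqq \det[u_i^{\lambda_j+N-j}]$, the definition of $\mathcal{D}_k$ gives
\[
\mathcal{D}_k\ssp s_\lambda = \frac{1}{V_N(\vec u)}\sum_{i=1}^N (u_i\partial_i)^k\bigl[a_\lambda(\vec u)\bigr],
\]
so everything comes down to showing that $a_\lambda$ is an eigenfunction of $\sum_{i} (u_i\partial_i)^k$ with eigenvalue $\sum_{j=1}^N(\lambda_j+N-j)^k$.

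First I would expand the alternant as a signed sum over permutations,
\[
a_\lambda(\vec u) = \sum_{\sigma\in S_N}\operatorname{sgn}(\sigma)\prod_{i=1}^N u_i^{\lambda_{\sigma(i)}+N-\sigma(i)},
\]
and use the elementary fact that $u_i\partial_i$ is the Euler (degree-counting) operator in the variable $u_i$: it acts on any Laurent monomial $u_i^m$ as multiplication by $m$, hence $(u_i\partial_i)^k u_i^m = m^k u_i^m$. Applying $(u_i\partial_i)^k$ to a single permutation term therefore multiplies it by $(\lambda_{\sigma(i)}+N-\sigma(i))^k$, since the exponent of $u_i$ in that term is exactly $\lambda_{\sigma(i)}+N-\sigma(i)$.

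The one point requiring a moment's thought --- and the step I would flag as the conceptual crux --- is that the resulting eigenvalue is \emph{independent of} $\sigma$. Summing over $i$, each permutation term is multiplied by $\sum_{i=1}^N(\lambda_{\sigma(i)}+N-\sigma(i))^k$; reindexing this sum by $j=\sigma(i)$ (a bijection of $\{1,\dots,N\}$) shows it equals $\sum_{j=1}^N(\lambda_j+N-j)^k$ regardless of $\sigma$. Thus the common scalar factors out of the signed sum, giving
\[
\sum_{i=1}^N (u_i\partial_i)^k\ssp a_\lambda = \Bigl(\sum_{j=1}^N(\lambda_j+N-j)^k\Bigr)\ssp a_\lambda.
\]
Dividing by $V_N(\vec u)$ and using $a_\lambda/V_N = s_\lambda$ yields the claim. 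No analytic or convergence considerations enter, since $\mathcal{D}_k$ acts on the Laurent polynomial $s_\lambda$ and the entire computation is purely algebraic; the argument moreover makes transparent why the eigenvalue is precisely the power sum $p_k$ of the shifted coordinates $\lambda_j+N-j$ that appears in \eqref{eq:p_k-rho-moments-in-text}.
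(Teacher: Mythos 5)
Your proof is correct and is essentially the paper's own argument: the paper simply declares the result ``immediate from the definition of Schur functions,'' and your writeup spells out exactly that computation --- conjugation by $V_N$ turns $\mathcal{D}_k$ into $\sum_i(u_i\partial_i)^k$ acting on the alternant $\det[u_i^{\lambda_j+N-j}]$, whose monomial terms are eigenfunctions of the Euler operators with the $\sigma$-independent eigenvalue $\sum_j(\lambda_j+N-j)^k$. Nothing to correct.
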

\begin{proof}
  Immediate from the definition of Schur functions \eqref{eq:Schur-function-def}.
\end{proof}

We now state and prove the main result of this section:
\begin{theorem}
  \label{th:LLN}
  Let $\rho_N$, $N \ge 1$, be a sequence of probability measures on $\GT_N$. Assume that there exists a sequence of symmetric functions $F_k(z_1,\ldots,z_k)$, $k \ge 1$, analytic in a complex neighborhood of $1^k$, such that for every fixed $k$ we have
  \begin{equation}
    \lim_{N \rightarrow \infty} \sqrt[N]{S_{\rho_N}(u_1,\ldots,u_k,1^{N-k})} = F_k(u_1,\ldots,u_k), \label{assumptioN1}
  \end{equation}
	where the convergence is uniform in a complex neighborhood
	of $1^k$. Then, as $N \rightarrow \infty$, the random
	measures $m[\rho_N]$ converge in probability, in the sense
	of moments, to a deterministic probability measure
	$\tmmathbf{\mu}$ on $\mathbb{R}$. The moments
	$(\tmmathbf{\mu}_k)_{k\ge1}$ of this measure are given by
  \begin{multline}
    \tmmathbf{\mu}_k = \sum_{l = 0}^k \binom{k}{l} \frac{1}{(l + 1)!}
    \\
    \times
    \pa_u^l \Bigl[(1+u)^k\bigl(\pa_1 F_{l+1}(1+u,1+u w_{l+1},1+u w_{l+1}^2,\dots,1+u w_{l+1}^l)\bigr)^{k-l}\Bigr]\Big\vert_{u=0}.
    \label{eq:LLN-gen}
  \end{multline}
\end{theorem}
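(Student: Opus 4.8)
The plan is to reduce the statement to the large-$N$ asymptotics of the moments $p_k^{(\rho_N)}=\sum_{i=1}^N(\lambda_i+N-i)^k$, since the $k$-th moment of $m[\rho_N]$ equals $N^{-(k+1)}p_k^{(\rho_N)}$. Convergence in probability in the sense of moments will follow once I establish that $N^{-(k+1)}\mathbf{E}\bigl[p_k^{(\rho_N)}\bigr]\to\tmmathbf{\mu}_k$ and that the rescaled variances $N^{-2(k+1)}\operatorname{Var}\bigl(p_k^{(\rho_N)}\bigr)$ vanish. The bridge to the Schur generating function is Proposition \ref{prop:eigen}: applying $\mathcal{D}_k$ to $S_{\rho_N}$ and evaluating at $1^N$ gives $\mathbf{E}\bigl[p_k^{(\rho_N)}\bigr]=\bigl(\mathcal{D}_kS_{\rho_N}\bigr)\big\vert_{1^N}$, and likewise $\mathbf{E}\bigl[p_{k_1}^{(\rho_N)}p_{k_2}^{(\rho_N)}\bigr]=\bigl(\mathcal{D}_{k_1}\mathcal{D}_{k_2}S_{\rho_N}\bigr)\big\vert_{1^N}$; the apparent Vandermonde singularities are harmless since these are symmetric functions, analytic near $1^N$.

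For the mean, the core is the asymptotics of $\bigl(\mathcal{D}_kS_{\rho_N}\bigr)\vert_{1^N}$. Conjugating the Vandermonde through the Euler operators $\theta_i:=u_i\partial_i$, I would write $\mathcal{D}_k=\sum_{i=1}^N(\theta_i+a_i)^k$ with $a_i:=u_i\sum_{j\neq i}(u_i-u_j)^{-1}$ the logarithmic contribution of $V_N$. Expanding the $k$-th power and grouping by the number $l$ of factors $a_i$ and the $k-l$ factors $\theta_i$ that differentiate $S_{\rho_N}$, the non-commuting factors may be reordered freely up to lower order in $N$, which produces the weight $\binom{k}{l}$. Hypothesis \eqref{assumptioN1} enters as $\theta_iS_{\rho_N}=S_{\rho_N}\cdot N\,\partial_1\log F+o(N)$, so each of the $k-l$ differentiating factors contributes one power of $N\,\partial_1F$, while each $a_i$ couples the main index $i$ to one further index. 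A monomial with $l$ Vandermonde factors thus activates exactly $l+1$ variables --- the index $i$ together with $l$ partners --- which is why the function $F_{l+1}$ and the power $(\partial_1F_{l+1})^{k-l}$ appear; moreover each of the $k$ factors carries one copy of $u_i$, so the common prefactor $u_i^k=(1+u)^k$ factors out at the evaluation point.

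It remains to evaluate the symmetric, diagonally-singular sums at $1^N$. Rather than setting variables equal directly, I would symmetrize over the $l+1$ active variables and apply the corollary \eqref{eq:corol-sym} of Lemma \ref{lem:main-sym}: symmetrizing a function of $l+1$ variables against the Vandermonde is exactly $\tfrac{1}{l!}\partial_u^l$ of its evaluation at the root-of-unity configuration $(1+u,1+uw_{l+1},\dots,1+uw_{l+1}^{\,l})$, with the first argument $1+u$ playing the distinguished (non-symmetric) role of $\partial_1F_{l+1}$. Assembling these evaluations, together with the combinatorial count of active configurations that supplies the final power of $N$, yields the constants $\binom{k}{l}$ and $\tfrac{1}{(l+1)!}$ and hence formula \eqref{eq:LLN-gen}; along the way Lemma \ref{lem:roots-simplifying-sym} discards the low-degree polynomial pieces of the Vandermonde that vanish at the roots of unity. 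Crucially, $F_{l+1}$ is evaluated at genuinely complex points $1+uw_{l+1}^{\,j}$, so analyticity in a full complex neighborhood of $1^{l+1}$ (and not merely on the real axis) is indispensable. As a sanity check I would specialize to the factorized case $F_k(z_1,\dots,z_k)=\exp\bigl(\mathsf F(z_1)+\dots+\mathsf F(z_k)\bigr)$ and recover the single-contour formula \eqref{eq_limit_moments_A} of Theorem \ref{theorem_moment_convergence}, namely Corollary \ref{cor:sanity}.

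The hardest part is exactly this leading-order extraction in the non-factorized setting. In \cite{GorinBufetov2013free} the stronger hypothesis forces $\partial_1\log S_{\rho_N}$ to depend on $u_1$ alone, so the symmetrization decouples one variable at a time; here $\partial_1\log F_{l+1}$ genuinely depends on all $l+1$ active variables and the multivariate root-of-unity evaluation is unavoidable. The main technical burden is to show that both the reordering of the non-commuting $\theta_i,a_i$ and the replacement of $\theta_iS_{\rho_N}$ by its leading term introduce only $o(N^{k+1})$ errors, uniformly; here the stability of symmetrization under uniform convergence from Lemma \ref{lem:lemma1} is what keeps the error terms under control. Finally, for the variance I would run the analogous two-operator computation for $\bigl(\mathcal{D}_{k_1}\mathcal{D}_{k_2}S_{\rho_N}\bigr)\vert_{1^N}$ and verify that the connected (covariance) contribution is only $O(N^{k_1+k_2})$, two orders below the $N^{k_1+k_2+2}$ scale of the product of means; this gives $N^{-2(k+1)}\operatorname{Var}\bigl(p_k^{(\rho_N)}\bigr)\to0$, so each moment of $m[\rho_N]$ has a deterministic limit.
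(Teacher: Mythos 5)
Your proposal follows essentially the same route as the paper's proof: the same two-step structure (convergence of $N^{-(k+1)}\operatorname{\mathbf{E}}[p_k^{(\rho_N)}]$ via $\mathcal{D}_k S_{\rho_N}\vert_{1^N}$, then concentration via the two-operator second-moment computation), the same identification of the leading $N^{k+1}$ contribution (each of the $k-l$ derivatives hitting a separate copy in $S_{\rho_N}=(\sqrt[N]{S_{\rho_N}})^N$), and the same key ingredients --- \Cref{lem:lemma1} for stability of the symmetrization, \Cref{lem:main-sym} and its corollary \eqref{eq:corol-sym} for the root-of-unity evaluation, and \Cref{lem:roots-simplifying-sym} to kill symmetric pieces. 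Your reorganization through the conjugated Euler operators $\theta_i+a_i$ is only cosmetically different from the paper's direct expansion \eqref{sum2}--\eqref{sum3}. One harmless wrinkle: you feed in $\theta_i S_{\rho_N}=S_{\rho_N}\cdot N\,\partial_1\log F+o(N)$ but land on $\partial_1F_{l+1}$ in \eqref{eq:LLN-gen}; this is consistent because $F_{l+1}(1+u,1+uw_{l+1},\dots,1+uw_{l+1}^l)=1+O(u^{l+1})$ by \Cref{lem:roots-simplifying-sym}, so $\partial_1\log F_{l+1}$ and $\partial_1F_{l+1}$ differ only by terms annihilated by $\partial_u^{\,l}\vert_{u=0}$ --- but this deserves an explicit line.

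There is, however, one quantitative claim in your last step that is false in this generality and whose verification would fail if attempted literally: you assert that the connected part of $\operatorname{\mathbf{E}}\bigl[p_{k_1}^{(\rho_N)}p_{k_2}^{(\rho_N)}\bigr]$ is $O(N^{k_1+k_2})$, i.e., \emph{two} orders below the $N^{k_1+k_2+2}$ scale of the product of means. Under assumption \eqref{assumptioN1} alone, the covariance is generically of order $N^{k_1+k_2+1}$ --- this is precisely the content of \Cref{th:CLT-gen}, and it is realized with a nonzero limit by the random-parameter Schur measures of \Cref{th:CLT-tilings}; the $O(N^{k_1+k_2})$ bound is an artifact of the stronger hypotheses of \cite{bufetov2016fluctuations}, where $\partial_1\partial_2\log S_{\rho_N}$ converges without normalization. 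What the two-operator computation actually yields (and all that concentration requires) is that the order-$N^{k_1+k_2+2}$ part of $\operatorname{\mathbf{E}}[p_{k_1}^{(\rho_N)}p_{k_2}^{(\rho_N)}]$ factorizes into the product of the leading terms of the means --- the paper's \eqref{16} --- so that $\operatorname{Var}\bigl(p_k^{(\rho_N)}\bigr)=o(N^{2k+2})$. With that one correction, your outline matches the paper's argument.
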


\begin{proof}
\textbf{Step 1.}
First, we show convergence in expectation.
By \Cref{prop:eigen}, we have
$$
\left. \mathcal{D}_k\ssp S_{\rho_N} (u_1, u_2, \dots, u_N) \right|_{u_1=\dots= u_N=1}
=
\operatorname{\mathbf{E}}\ssp\biggl[
\sum_{i = 1}^N (\lambda_i + N - i)^k
\biggr],
$$
where $\lambda$ is distributed according to $\rho_N$. The expression
\begin{equation}
    \frac{1}{V_N (\vec{u})} \sum_{i = 1}^N u_i^n \partial_i^n [V_N (\vec{u})
    S_{\rho_N} (u_1, \ldots, u_N)] \label{sum}
  \end{equation}
can be written as
\begin{equation}
    \sum_{m = 0}^k\ssp
\sum_{\substack{l_0,\dots,l_m = 1 \\ l_i \neq l_j \text{ for } i \neq j}}^N
		\binom{k}{m} \frac{u_{l_0}^k \partial_{l_0}^{k - m}
    S_{\rho_N} (u_1, \ldots, u_N)}{(u_{l_0} - u_{l_1}) \ldots (u_{l_0} -
    u_{l_m})} . \label{sum2}
  \end{equation}
Further on, using the form $S_{\rho_N} = (\sqrt[N]{S_{\rho_N}})^N$ when applying the differential operator, one can write \eqref{sum2} as a linear combination of terms of the form
  \[ \frac{u_{b_0}^k \left( \partial_{b_0} \sqrt[N]{S_{\rho_N}} \right)^{l_1} \ldots \left( \partial_{b_0}^{k - m}
      \sqrt[N]{S_{\rho_N}} \right)^{l_{k -
     m}}}{(u_{b_0} - u_{b_1}) \ldots (u_{b_0} - u_{b_m})} +
     \text{{\hspace{13em}}} \]
  \begin{equation}
    \text{{\hspace{12em}}} \ldots + \frac{u_{b_m}^k \left( \partial_{b_m} \sqrt[N]{S_{\rho_N}} \right)^{l_1} \ldots \left( \partial_{b_m}^{k - m}
      \sqrt[N]{S_{\rho_N}} \right)^{l_{k -
     m}}}{(u_{b_m} - u_{b_0}) \ldots (u_{b_m} - u_{b_{m
    - 1}})}, \label{sum3}
  \end{equation}
  where $l_1 + 2 l_2 + \ldots + (k - m) l_{k - m} = k - m$, $b_0, \ldots, b_m
  \in \{ 1, \ldots, N \}$ are distinct, and $m = 0, \ldots, k$.

	By \Cref{lem:lemma1}, expression \eqref{sum3} has a limit
	as $u_1, \ldots, u_N \rightarrow  1$. Furthermore, the
	symmetry of $S_{\rho_N}$ guarantees that the limit does
	not
  depend on $b_0, \ldots, b_m \in \{ 1, \ldots, N \}$. Consequently, this
  limit will appear $m!\ssp \binom{N}{m + 1}$ times in
	$
	\mathcal{D}_k\ssp
  S_{\rho_N} \vert_{u_1 = \ldots = u_N = 1}
	$.
  Therefore, the leading order of asymptotics in $N$ of
  $\mathcal{D}_k\ssp S_{\rho_N} \vert_{u_1 = \ldots = u_N = 1} $
  is $N^{k+1}$, and it is obtained from the terms of the form \eqref{sum3} with $l_1 = k - m$ and $l_2 = \ldots = l_{k - m} = 0$.
	Note that the leading power $N^{k+1}$ precisely corresponds to taking the moments of the empirical measure $m[\rho_N]$, see
	\eqref{eq:random-measure-m-rho-definition}.

	As a conclusion, the leading asymptotics in $N$ of
	$
	\mathcal{D}_k\ssp
  S_{\rho_N} \vert_{u_1 = \ldots = u_N = 1}
	$
	is given by
	$$
	N^{k+1} \sum_{m = 0}^k \binom{k}{m} \frac{1}{(m + 1) !} \left( \sum_{i=1}^{m+1} \left. \frac{ u_i^k (\partial_1 F_{k} (u_i, u_1, \ldots, \hat u_i, \ldots, u_k))^{k - m} }{ \prod_{j=1 ; j \neq i}^{m+1} (u_i - u_j) } \right|_{u_1, u_2, \dots, u_{m+1}=1} \right).
	$$
	Applying \Cref{lem:main-sym}, we arrive at
	the fact that the expected moments of the empirical measure
	$m[\rho_N]$ converge to the moments $\tmmathbf{\mu}_k$ given by
	\eqref{eq:LLN-gen}.

	\medskip
	\noindent
	\textbf{Step 2.}
	It remains to show that the random moments
	of the empirical measure $m[\rho_N]$
	concentrate around their expected values.
	This would follow from
  \begin{equation}
    \lim_{N \rightarrow \infty} \operatorname{\mathbf{E}} \left[ \frac{1}{N^{k +
    1}} \sum_{i = 1}^N (\lambda_i + N - i)^k \right]^2 = (\tmmathbf{\mu}_k)^2
    \text{, \qquad for every } k \ge1, \label{16}
  \end{equation}
	and will guarantee the desired convergence of moments in probability.

	Up to a power of $N$,
	the left-hand side of \eqref{16} can be obtained by applying
	$\mathcal{D}_k$ to $S_{\rho_N}$ twice, and then setting $u_1
	= \dots = u_N = 1$.
  We have
  \begin{equation}
  \label{eq:Dk2}
  (\mathcal{D}_k)^2 S_{\rho_N} =\mathcal{D}_{2 k} S_{\rho_N} + \frac{1}{V_N
	 (\vec{u})}
	\sum_{\substack{m,n = 1 \\ m \neq n}}^N
	 (u_n \partial_n)^k (u_m
	 \partial_m)^k [V_N (\vec{u}) S_{\rho_N}].
	 \end{equation}
	The leading power of $N$ in
	$\mathcal{D}_{2 k} S_{\rho_N}$ is $N^{2k+1}$, and so it will not contribute to the
	limit in \eqref{16} which will be of the order $N^{2k+2}$.
  Next, in the second summand in \eqref{eq:Dk2}, only terms of the form
  \begin{equation}
    \frac{1}{V_N (\vec{u})}
		\sum_{\substack{m,n = 1 \\ m \neq n}}^N
		u_n^k u_m^k
    \partial_n^k \partial_m^k [V_N (\vec{u}) S_{\rho_N}] \Big\vert_{u_1=\dots= u_N=1}, \label{17}
  \end{equation}
  contribute to the degree $N^{2k+2}$. Expression \eqref{17} is a linear combination of terms
  \begin{equation}
    \frac{u_{a_0}^k u_{b_0}^k \partial_{a_0}^{k - \nu} \partial_{b_0}^{k -
    \mu} S_{\rho_N}}{(u_{a_0} - u_{a_1}) \ldots (u_{a_0} - u_{a_{\nu}})
    (u_{b_0} - u_{b_1}) \ldots (u_{b_0} - u_{b_{\mu}})}, \label{18}
  \end{equation}
  where $a_0 \neq b_0$, $a_i \neq a_j$, $b_i \neq b_j$ and $| \{ b_0 \} \cap
  \{ a_1, \ldots, a_{\nu} \} | + | \{ a_0 \} \cap \{ b_1, \ldots, b_{\mu} \} |
  \leq 1$. It is convenient to symmetrize these terms in $\{ u_{a_i} \}$ and $\{ u_{b_j} \}$.
	By \Cref{lem:lemma1},
	such symmetrizazions have a limit as $u_i \to 1$, $1 \le i \le N$.

	Analogously to, e.g., \cite[Theorem
	5.1]{GorinBufetov2013free}, the leading contribution in the
	large-$N$ expansion comes from the terms where the indices
	$a_0,\dots,a_\nu,b_0,\dots,b_\mu\in\{1,\dots,N\}$ are all
	distinct.
  This contribution can be written in the factorized form
\begin{multline*}
\left(
\frac{u_{a_0}^k \bigl( \partial_{a_0} \sqrt[N]{S_{\rho_N}} \bigr)^{k-\nu}}
     {(u_{a_0}-u_{a_1}) \ldots (u_{a_0}-u_{a_\nu})}
     + \ldots +
\frac{u_{a_\nu}^k \bigl( \partial_{a_\nu} \sqrt[N]{S_{\rho_N}} \bigr)^{k-\nu}}
     {(u_{a_\nu}-u_{a_0}) \ldots (u_{a_\nu}-u_{a_{\nu-1}})}
\right)
\\
\times
\left(
\frac{u_{b_0}^k \bigl( \partial_{b_0} \sqrt[N]{S_{\rho_N}} \bigr)^{k-\mu}}
     {(u_{b_0}-u_{b_1}) \ldots (u_{b_0}-u_{b_\mu})}
     + \ldots +
\frac{u_{b_\mu}^k \bigl( \partial_{b_\mu} \sqrt[N]{S_{\rho_N}} \bigr)^{k-\mu}}
     {(u_{b_\mu}-u_{b_0}) \ldots (u_{b_\mu}-u_{b_{\mu-1}})}
\right),
\end{multline*}
where each of the two factors is as in \eqref{sum3}.
  This implies \eqref{16}, which completes the proof of the theorem.
\end{proof}

A particular case of \Cref{th:LLN} recovers 
\cite[Theorem~5.1]{GorinBufetov2013free} (which we fomulated as
\Cref{theorem_moment_convergence} above):
\begin{corollary}
\label{cor:sanity}
In the notation of \Cref{th:LLN}, assume additionally that there exists a function $\mathsf{F}(u)$ such that
$$
F_k (u_1, \ldots, u_k) = \exp \left( \mathsf{F}(u_1) + \mathsf{F}(u_2) + \dots + \mathsf{F}(u_k) \right), \qquad \mbox{for any $k \in \mathbb{Z}_{\ge1}$.}
$$
Then the moments of the limiting measure can be written as
 \begin{equation}
  \tmmathbf{\mu}_k = \sum_{l = 0}^k \binom{k}{l} \frac{1}{(l + 1) !}
    \ssp \pa_u^{\ssp l}  \left[ (1+u)^k [\mathsf{F}'(1+u)]^{k-l} \right] \Big\vert_{u=0}.
		\label{eq:LLN-gen-cor}
  \end{equation}
\end{corollary}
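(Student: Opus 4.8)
The plan is to substitute the multiplicative hypothesis $F_{l+1}(z_1,\dots,z_{l+1})=\exp\bigl(\mathsf{F}(z_1)+\dots+\mathsf{F}(z_{l+1})\bigr)$ directly into the general moment formula \eqref{eq:LLN-gen} and to show that the root-of-unity arguments appearing there conspire to remove everything except the single factor $\mathsf{F}'(1+u)$, leaving precisely \eqref{eq:LLN-gen-cor}. Since the outer structure $\sum_{l=0}^k \binom{k}{l}\frac{1}{(l+1)!}\,\pa_u^l\bigl[(1+u)^k(\,\cdot\,)^{k-l}\bigr]\big\vert_{u=0}$ of the two formulas is identical, it suffices to compare the inner factors $\pa_1 F_{l+1}(1+u,1+uw_{l+1},\dots,1+uw_{l+1}^l)$ and $\mathsf{F}'(1+u)$ after raising to the power $k-l$. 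First I would differentiate: since only the first variable is singled out,
\[
\pa_1 F_{l+1}(z_1,\dots,z_{l+1})=\mathsf{F}'(z_1)\exp\bigl(\mathsf{F}(z_1)+\dots+\mathsf{F}(z_{l+1})\bigr),
\]
so evaluating at $z_{j+1}=1+u\,w_{l+1}^{j}$, $j=0,\dots,l$, yields $\mathsf{F}'(1+u)\,\exp\bigl(\sum_{j=0}^l \mathsf{F}(1+u w_{l+1}^{j})\bigr)$.

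Next I would analyze the exponential factor, which is the heart of the argument. Expanding $\mathsf{F}(1+x)=\sum_{n\ge0}c_n x^n$ and summing over the $(l+1)$-st roots of unity gives
\[
\sum_{j=0}^l \mathsf{F}(1+u w_{l+1}^{j})=\sum_{n\ge0}c_n u^n\sum_{j=0}^{l}w_{l+1}^{jn}.
\]
The inner sum equals $l+1$ when $(l+1)\mid n$ and vanishes otherwise; this is the elementary root-of-unity identity underlying \Cref{lem:roots-simplifying-sym}. The normalization $S_{\rho_N}(1^N)=1$ forces $F_1(1)=\exp(\mathsf{F}(1))=1$, and taking the branch with $\mathsf{F}(1)=0$ (as in \Cref{theorem_moment_convergence}) kills the $n=0$ term, so the next surviving contribution occurs at $n=l+1$. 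Hence $\sum_{j=0}^l \mathsf{F}(1+uw_{l+1}^{j})=O(u^{l+1})$, and therefore
\[
\Bigl(\pa_1 F_{l+1}(1+u,\dots,1+uw_{l+1}^l)\Bigr)^{k-l}=[\mathsf{F}'(1+u)]^{k-l}\,\bigl(1+O(u^{l+1})\bigr).
\]

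The final step is to observe that this correction is invisible to the operation $\pa_u^l[\,\cdot\,]\big\vert_{u=0}$ in \eqref{eq:LLN-gen}. Writing the $l$-th summand's bracket as $A(u)\bigl(1+h(u)\bigr)$ with $A(u)\coloneqq (1+u)^k[\mathsf{F}'(1+u)]^{k-l}$ analytic at $0$ and $h(u)=O(u^{l+1})$, the product $A(u)h(u)$ vanishes to order $l+1$, so its $l$-th derivative at $u=0$ is zero; thus $\pa_u^l\bigl[A(u)(1+h(u))\bigr]\big\vert_{u=0}=\pa_u^l[A(u)]\big\vert_{u=0}$. Summing over $l$ then reproduces \eqref{eq:LLN-gen-cor} exactly. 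The only genuine content lies in the cancellation inside the exponential factor, so the step I expect to require the most care is verifying the vanishing of the root-of-unity power sums for degrees $1,\dots,l$ together with the normalization $\mathsf{F}(1)=0$; the passage through $\pa_u^l\big\vert_{u=0}$ is then routine bookkeeping.
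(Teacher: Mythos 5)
Your proof is correct and follows essentially the same route as the paper's: differentiate the multiplicative form of $F_{l+1}$, use the vanishing of root-of-unity power sums (the content of \Cref{lem:roots-simplifying-sym}) to conclude that the exponential factor is $1+O(u^{l+1})$, and observe that this correction is annihilated by $\pa_u^{\ssp l}\big\vert_{u=0}$. Your write-up is merely more explicit than the paper's, spelling out the power-sum computation and the normalization $\mathsf{F}(1)=0$ that the paper leaves implicit.
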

\begin{proof}
We have
$$
\pa_1 F_k (u_1, \ldots, u_k) = \mathsf{F}'(u_1) F_k (u_1, \ldots, u_k),
$$
and
$$
\pa_1 F_{l+1} (1+u, 1+u w_{l+1},  \ldots, 1+u w_{l+1}^l) = \mathsf{F}'(1+u) \left( 1 + O \left( u^{l+1} \right) \right),
$$
due to the fact that the function $F_{l+1} (u_1, u_2, \dots, u_{l+1})$ is symmetric in all variables,
and thanks to \Cref{lem:roots-simplifying-sym}.
Therefore, the expression under $\pa_u^{\ssp l}$ in \eqref{eq:LLN-gen} can be simplified
for this special $F_k$, and
we arrive at \eqref{eq:LLN-gen-cor}.
\end{proof}

%Let us make two remarks on the statement of \Cref{th:LLN}.
%First, the appearance of complex numbers
%(roots of unity) in the statement of
%\Cref{th:LLN} might look a bit artificial.
%Originally, we were able to write the expression in the right-hand side of
%\eqref{eq:LLN-gen} in terms of partial derivatives of $\partial_1
%F_{l+1}$ with real coefficients; however, the resulting
%formulas would be rather cumbersome. Moreover, simplifying
%these formulas in our application --- the domino tilings of the
%Aztec diamond --- would require a great deal of additional work. As
%will be seen in the next sections, the expression we have
%presented is much better suited to our purposes.

\begin{remark}
Compared to \cite{GorinBufetov2013free} and
\cite{bufetov2016fluctuations}, we work with the $N$-th root
of the Schur generating function, rather than with its
logarithm, as $N$ tends to infinity. It would be possible to
state our results in terms of the limit of the logarithm of
the Schur generating function; moreover, the
double-summation formula for the covariance in
\Cref{th:CLT-gen} below would then be slightly simpler.
However, our main application in this paper --- the Schur
measure with random parameters --- is more straightforward
to analyze with the current formulation.
\end{remark}

\begin{remark}
The use of complex numbers (roots of unity) in \eqref{eq:LLN-gen} is a key technical tool, even if it originally might look a bit artificial. We are able to write several equivalent formulas for this expression, which involve real coefficients only. In particular, we
have the following representation for the moments:
\begin{equation} 
		\label{ropes-2}
		\begin{split}
		&\tmmathbf{\mu}_k = \sum_{m = 0}^k \binom{k}{m} \frac{1}{(m + 1) !}
     \sum_{n = 1}^{m + 1} \binom{m + 1}{n}  
    \\&\hspace{18pt}\times 
		(- 1)^{n + 1}
		\sum_{l_1 + \cdots + l_n = m} \binom{m}{l_1, \ldots, l_n}
		\ssp
    \partial_n^{\ssp l_n} \ldots \partial_1^{\ssp l_1} \bigl[u_1^k (\partial_1 F (u_1,
    \ldots, u_k))^{k - m}\bigr] \Big\vert_{u_1 = \cdots = u_k = 1} . 
		\end{split}
\end{equation}
We omit the proof, because expression \eqref{eq:LLN-gen} for the 
moments $\tmmathbf{\mu}_k$ is
far more suitable for our purposes: simplifying
\eqref{ropes-2} for our main application --- the domino
tilings of the Aztec diamond --- would require substantial
additional work. We also believe that \eqref{eq:LLN-gen}
will be more convenient for other potential applications.
\end{remark}

\subsection{Free cumulants}

Theorem 5.1 in \cite{GorinBufetov2013free} is closely
related to free probability; see, e.g.,
\cite[Sections~1.4 and~1.5]{GorinBufetov2013free},
\cite{matsumoto2018moment}, and \cite{Collins2018}.
In this section, we derive an expression for the free cumulants
of the limiting measure whose moments are given by
formula \eqref{eq:LLN-gen}.
For basics on free cumulants, see \cite{Speicher2011}.

%In particular, the expression shows that

Let $\{ G_k (u_1, \ldots, u_k) \}_{k \ge1}$ be a sequence of
functions that are symmetric in all variables except possibly for the first one, and such that
\[ G_k (u_1, \ldots, u_k) = G_{k + 1} (u_1, \ldots, u_k, 1), \qquad k \ge 1. \]
They should be thought of as functions appearing in the right-hand side of \eqref{assumptioN1}, differentiated once with respect to $u_1$ (thus, no longer symmetric in $u_1$).

\begin{lemma}
  Let $\{ G_k (u_1, \ldots, u_k) \}_{k \ge1}$ be a sequence of
	functions as above. Then, for any $k,l \ge1$, and any $1\le s
	\le \min(k,l)$, we have
   \begin{equation*}
	 \partial_u^{\ssp s} G_{l + 1} (1 + u, 1 + w_{l + 1} u, \ldots, 1 +
     w_{l + 1}^l u) \big\vert_{u = 0}
		 =
		 \partial_u^{\ssp s} G_{k + 1} (1 + u, 1 +
     w_{k + 1} u, \ldots, 1 + w_{k + 1}^k u) \big\vert_{u = 0} .
	 \end{equation*}
\end{lemma}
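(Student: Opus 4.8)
The plan is to reduce the identity to the equality of a single Taylor coefficient and then to recognize the resulting phase sum as a symmetric function specialized at roots of unity. Since $\partial_u^{s}(\cdot)\big|_{u=0}=s!\,[u^s](\cdot)$, it suffices to prove that the coefficient of $u^s$ in
\[
\widetilde F_m(u)\coloneqq G_{m+1}(1+u,\,1+w_{m+1}u,\,\ldots,\,1+w_{m+1}^m u)
\]
does not depend on $m$ for $m\ge s$; applying this with $m=l$ and $m=k$ (both $\ge s$, since $s\le\min(k,l)$) then gives the claim. Expanding $G_{m+1}$ in a Taylor series about $(1^{m+1})$, writing $c_\alpha^{(m+1)}$ for its coefficients and $\alpha=(\alpha_0,\dots,\alpha_m)$ for the multi-index of exponents (with $\alpha_0$ attached to the non-symmetric variable $u_1=1+u$, which contributes trivial phase), substitution gives
\[
[u^s]\widetilde F_m=\sum_{|\alpha|=s}c_\alpha^{(m+1)}\,w_{m+1}^{\langle\alpha\rangle},
\qquad
\langle\alpha\rangle\coloneqq\sum_{i=0}^m i\,\alpha_i .
\]

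First I would record two structural facts about $c_\alpha^{(m+1)}$. By symmetry of $G_{m+1}$ in $u_2,\dots,u_{m+1}$, the coefficient depends only on $\alpha_0$ and on the partition $\mu$ formed by the nonzero entries among $\alpha_1,\dots,\alpha_m$. By the consistency relation $G_m(u_1,\dots,u_m)=G_{m+1}(u_1,\dots,u_m,1)$, setting the last variable to $1$ extracts exactly the terms with $\alpha_m=0$, so appending or deleting trailing zeros leaves the coefficient unchanged; hence the common value $c(\alpha_0;\mu)$ is independent of $m$ whenever $m\ge\ell(\mu)$, which holds since $\ell(\mu)\le s-\alpha_0\le s\le m$. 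Grouping the sum by $a=\alpha_0$ and $\mu$ yields
\[
[u^s]\widetilde F_m=\sum_{a=0}^{s}\ \sum_{\mu\vdash(s-a)}c(a;\mu)\,\Phi_m(\mu),
\qquad
\Phi_m(\mu)\coloneqq\sum_{\substack{(\alpha_1,\dots,\alpha_m)\\ \text{nonzero parts}\,=\,\mu}}\ \prod_{j=1}^m\bigl(w_{m+1}^{\,j}\bigr)^{\alpha_j}.
\]

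The key observation is that $\Phi_m(\mu)$ is precisely the monomial symmetric function $m_\mu$ evaluated at $x_j=w_{m+1}^{\,j}$, $j=1,\dots,m$. I would then expand $m_\mu=\sum_\nu a_{\mu\nu}\,p_\nu$ in the power-sum basis, where $\nu$ runs over partitions of $|\mu|=s-a$ and the $a_{\mu\nu}\in\mathbb{Q}$ are \emph{universal} (they are the image of a fixed identity in the ring of symmetric functions under the specialization homomorphism, hence independent of the number of variables). For each part $d=\nu_j$ one has $1\le d\le s-a\le s\le m$, so $w_{m+1}^{\,d}\neq1$ and the geometric sum gives $p_d(x_1,\dots,x_m)=\sum_{i=1}^m w_{m+1}^{\,id}=-1$ — the same root-of-unity cancellation underlying \Cref{lem:roots-simplifying-sym}. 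Therefore $\Phi_m(\mu)=\sum_\nu a_{\mu\nu}(-1)^{\ell(\nu)}$, which is manifestly $m$-independent, and substituting back shows $[u^s]\widetilde F_m$ is $m$-independent for $m\ge s$.

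The crucial use of the hypothesis $m\ge s=\min(k,l)$ — and the step I expect to be the main obstacle — is exactly the power-sum evaluation: the identity $p_d(x)=-1$ requires $d\le m$, which is guaranteed only because every part occurring is bounded by $s\le m$ (for $d>m$ one would instead pick up the degenerate value $p_d=m$, breaking the cancellation). The remaining work, namely reducing $c_\alpha^{(m+1)}$ to the $m$-independent quantity $c(a;\mu)$ through symmetry and the consistency relation, is routine bookkeeping but must be carried out carefully to ensure that enough slots are available to absorb the trailing zeros at each level.
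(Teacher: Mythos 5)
Your proposal is correct and follows essentially the same route as the paper's proof: both use the consistency relation $G_m(u_1,\ldots,u_m)=G_{m+1}(u_1,\ldots,u_m,1)$ together with the symmetry in all variables but the first to reduce the Taylor coefficients to symmetric polynomials evaluated at the roots-of-unity points, and then exploit the fact that the relevant Newton power sums ($p_d=-1$ for $1\le d\le m$) are independent of the level. The only cosmetic difference is organizational: the paper pads the shorter point set with zeros and compares power sums of the two $k$-element sets directly, whereas you evaluate each coefficient as a manifestly $m$-independent universal constant via the monomial-to-power-sum expansion.
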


\begin{proof}
Without loss of generality, assume that $k >l$. We need to prove that the Taylor coefficients up to degree $l$ of the functions
$$
G_{k + 1} (1 + u, 1 + w_{l + 1} u, \ldots, 1 + w_{l + 1}^l u,1,1, \dots,1)
$$
and
$$
G_{k + 1} (1 + u, 1 + w_{k + 1} u, \ldots, 1 + w_{k + 1}^k u)
$$
coincide. To see this, note that the Newton power sums of
the $k$-element sets
$(w_{k + 1}, \dots, w_{k + 1}^k)$
and
$(w_{l + 1}, \dots, w_{l + 1}^l, 0,\dots,0)$
coincide
up to degree $l$. Therefore, any symmetric polynomial in
these variables also has the same value on the two sets.
Owing to the symmetry with respect to all variables except
the first one, the Taylor coefficients involve only such
polynomials, which completes the proof of the lemma.
\end{proof}

Therefore, the moments
$\tmmathbf{\mu}_k$
\eqref{eq:LLN-gen} can be written as
\[ \tmmathbf{\mu}_k = \sum_{l = 0}^k \binom{k}{l} \frac{1}{(l + 1) !}
   \ssp
	 \partial_u^{\ssp l} [(1 + u)^k (\partial_1 F_{k + 1} (1 + u, 1 + w_{k +
   1} u, \ldots, 1 + w_{k + 1}^k u))^{k - l}] \Big\vert_{u = 0}, \]
the difference with \eqref{eq:LLN-gen} is that now only the function $F_{k+1}$ appears in the right-hand side.

Arguing as in \cite[Equation 6.2]{GorinBufetov2013free}, we can write this summation as
a contour integral:
$$
\frac{1}{2 \pi i (k + 1)} \oint_1 \frac{1}{u} \left( u\ssp \partial_1 F_{k +
   1} (u, 1 - w_{k + 1} + w_{k + 1} u, \ldots, 1 - w_{k + 1}^k + w_{k + 1}^k
   u) + \frac{u}{u - 1} \right)^{k + 1} d \text{} u,
$$
where we integrate over a small contour around $1$,
oriented counterclockwise. Under the change of variables $u = e^v$,
this integral becomes
\begin{equation}
  \frac{1}{2 \pi i (k + 1)} \oint_0 \left( e^v \partial_1 F_{k + 1} (e^v,
  1 - w_{k + 1} + w_{k + 1} e^v, \ldots, 1 - w_{k + 1}^k + w_{k + 1}^k e^v) +
  \frac{e^v}{e^v - 1} \right)^{k + 1} d \text{} v, \label{eq:m}
\end{equation}
where the contour $\mathcal{C}_1$ is now a small positively oriented circle
around $v = 0$.
This computation leads to the following:
\begin{proposition}
  The moments $(\tmmathbf{\mu}_k)_{k \ge1}$ of the limiting measure
  from \Cref{th:LLN} are given by
  \begin{multline}
		\label{eq:3}
		\tmmathbf{\mu}_k
		= \sum_{l=0}^k \binom{k}{l} \frac{1}{(l+1)!}
		\ssp\partial_v^{\ssp l}
		\biggl(
					\frac{e^v}{e^v-1}
			- \frac{1}{v}
\\+
			e^v \partial_1 F_{k+1}\big(
				e^v,\,
				1-w_{k+1}+w_{k+1}e^v,\,
				\ldots,\,
				1-w_{k+1}^k+w_{k+1}^k e^v
			\big)
		\biggr)^{k-l}
		\bigg\vert_{v=0},
	\end{multline}
  and its free cumulants $(\tmmathbf{c}_k)_{k \ge1}$ are given by
  \begin{multline*}
		\tmmathbf{c}_k
		=
		\frac{1}{(k-1)!}
		\partial_v^{k-1}
		\biggl(
					 \frac{e^v}{e^v-1}
			- \frac{1}{v}
\\+
			e^v \partial_1 F_{k+1}\big(
				e^v,\,
				1-w_{k+1}+w_{k+1}e^v,\,
				\ldots,\,
				1-w_{k+1}^k+w_{k+1}^k e^v
			\big)
		\biggr)
		\bigg\vert_{v=0}.
	\end{multline*}
\end{proposition}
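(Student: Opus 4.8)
The plan is to take the contour-integral representation \eqref{eq:m} of the moments $\tmmathbf{\mu}_k$ (derived above, together with the rewriting of \eqref{eq:LLN-gen} in terms of the single function $F_{k+1}$) as the starting point, and to recognize the bracketed function as the Laurent expansion at $v=0$ of the inverse Cauchy transform of the limiting measure $\tmmathbf{\mu}$. Concretely, I would abbreviate
\[
  \Phi_k(v)\coloneqq \frac{e^v}{e^v-1}+e^v\,\pa_1 F_{k+1}\bigl(e^v,\,1-w_{k+1}+w_{k+1}e^v,\,\ldots,\,1-w_{k+1}^k+w_{k+1}^k e^v\bigr),
\]
so that \eqref{eq:m} reads $\tmmathbf{\mu}_k=\tfrac{1}{k+1}\,\mathrm{Res}_{v=0}\,\Phi_k(v)^{k+1}$. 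Since $\tfrac{e^v}{e^v-1}-\tfrac1v$ is holomorphic at $v=0$, the function $\Phi_k$ has a single simple pole there with residue $1$, i.e.\ $\Phi_k(v)=\tfrac1v+\widetilde\Phi_k(v)$ with $\widetilde\Phi_k$ analytic and $\widetilde\Phi_k(v)=\tfrac{e^v}{e^v-1}-\tfrac1v+e^v\pa_1 F_{k+1}(\ldots)$, exactly the integrand appearing in the Proposition.

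First I would derive the moment formula \eqref{eq:3} from \eqref{eq:m} by expanding the residue. Substituting $\Phi_k=\tfrac1v+\widetilde\Phi_k$ into $\Phi_k^{\,k+1}$ and applying the binomial theorem, the coefficient of $v^{-1}$ selects, from each power $\widetilde\Phi_k^{\,j}$, its Taylor coefficient of order $k-j$; writing $j=k-l$ this gives
\[
  \mathrm{Res}_{v=0}\,\Phi_k(v)^{k+1}=\sum_{l=0}^{k}\binom{k+1}{l+1}\frac{1}{l!}\,\pa_v^{\,l}\,\widetilde\Phi_k(v)^{\,k-l}\Big\vert_{v=0}.
\]
Using $\tfrac{1}{k+1}\binom{k+1}{l+1}=\tfrac{1}{l+1}\binom{k}{l}$ together with $\tfrac1{l+1}\cdot\tfrac1{l!}=\tfrac1{(l+1)!}$ turns this into precisely \eqref{eq:3}. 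This step is routine once the pole structure of $\Phi_k$ is isolated.

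The free cumulant formula then follows from the standard moment--free-cumulant duality (Lagrange--B\"urmann inversion, equivalently Voiculescu's $R$-transform; see \cite{Speicher2011}, and the analogous usage in \cite{GorinBufetov2013free}). The key fact is that if a measure has moments of the form $\mu_k=\tfrac{1}{k+1}[v^{-1}]K(v)^{k+1}$ for a Laurent series $K(v)=\tfrac1v+R(v)$ with $R$ analytic at $0$, then $K$ is the functional inverse of the Cauchy transform $G_\mu$, and the free cumulants are the Taylor coefficients $\tmmathbf{c}_k=[v^{k-1}]R(v)$; indeed this is the image of $\mu_k=\tfrac{1}{2\pi\ii}\oint z^k G_\mu(z)\,dz$ under the substitution $z=K(v)$ followed by integration by parts. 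Applying this with $K=\Phi_k$ and $R=\widetilde\Phi_k$ and reading off $\tmmathbf{c}_k=\tfrac{1}{(k-1)!}\pa_v^{\,k-1}\widetilde\Phi_k(v)\vert_{v=0}$ yields the claimed cumulant formula. The main obstacle is conceptual rather than computational: $\Phi_k$ depends on $k$ through the index of $F_{k+1}$, whereas the duality requires a single fixed $K=G_\mu^{-1}$. This is resolved by the lemma preceding the Proposition, which (with $u=e^v-1$, so that the arguments $1-w^j+w^j e^v$ become $1+w^j u$) shows that the $v$-Taylor coefficients of $\pa_1 F_{m+1}(1+u,1+w_{m+1}u,\ldots)$ up to the relevant order are independent of the index $m$; hence there is a well-defined Laurent series $K(v)=\tfrac1v+\sum_{j\ge1}\tmmathbf{c}_j v^{j-1}$ whose $(j-1)$-st coefficient may legitimately be computed from $F_{j+1}$. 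One should also note that \eqref{eq:LLN-gen} determines $\tmmathbf{\mu}$ only as a moment sequence, so that the identification of the $\tmmathbf{c}_k$ as genuine free cumulants is most cleanly carried out at the level of formal power series, where the duality holds independently of analytic questions such as compact support.
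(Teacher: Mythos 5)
Your proposal is correct and takes essentially the same route as the paper: the moment formula \eqref{eq:3} is related to the contour-integral representation \eqref{eq:m} by the very same binomial/residue computation (the paper runs it in the opposite direction, via Cauchy's integral formula and the binomial theorem), and the cumulant formula follows from the standard moment--free-cumulant duality, which is precisely what the paper's citation of \cite[Lemma 4]{BufetovZografos2024} encapsulates. Your explicit justification that the $k$-dependence of $\Phi_k$ through $F_{k+1}$ is harmless (via the lemma preceding the proposition, after the substitution $u=e^v-1$) is a detail the paper leaves implicit, but it does not constitute a different approach.
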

\begin{proof}
Starting from the right-hand side of \eqref{eq:3} and
applying Cauchy's integral formula together with the
binomial theorem, we arrive at \eqref{eq:m}. This
establishes the claim for the moments. Having done that, we
can appeal to \cite[Lemma 4]{BufetovZografos2024} to obtain
the desired expression for the free cumulants.
\end{proof}

\subsection{Schur measures with random parameters}

In this section we address the specific form of Schur
generating functions that appears in domino tilings of Aztec
diamond with random edge weights.

\begin{proposition}
\label{LLN:tilings-fixed}
Let $\rho_N$, $N \ge1$, be a sequence of probability measures on $\GT_N$, such that their Schur generating function has a form
$$
S_{\mathbf{\rho}_N} (x_1,\dots,x_N) = \left( \operatorname{\mathbf{E}}_{\BB} \prod_{i=1}^N \left( 1 - \bb + x_i \bb \right) \right)^{M_N},
$$
where the distribution $\BB$ of the random variable $\bb$ is independent of $M$, and has
moments of all orders.
Assume that $\lim_{N \to \infty} M_N / N = a \in \R_{>0}$.
Then the random measure $m [\rho_N]$ converges, as $N \rightarrow \infty$, in
  probability, in the sense of moments to a deterministic probability measure
  $\tmmathbf{\mu}$ on $\mathbb{R}$, with moments $(\tmmathbf{\mu}_k)_{k\ge1}$ given by
\begin{equation}
\label{eq:LLN-contour-gen}
\tmmathbf{\mu}_k = \frac{1}{2\pi \sqrt{-1} (k+1) } \oint_{\mathcal{C}_1} \frac{d z}{z} \ssp \mathcal{F} (z)^{k+1},
\end{equation}
where
\begin{equation}
\label{eq:LLN-F-definition}
\mathcal{F} (z) \coloneqq \frac{z}{z-1} + a\ssp z\ssp \operatorname{\mathbf{E}}_{\BB} \frac{ \bb}{1-\bb +\bb z},
\end{equation}
and $\mathcal{C}_1$ is a contour oriented counterclockwise around $z=1$ that encloses no other poles of the integrand.
\end{proposition}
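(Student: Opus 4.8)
The plan is to apply the general Law of Large Numbers, Theorem~\ref{th:LLN}, to the sequence $\rho_N$, so the first task is to verify its hypothesis \eqref{assumptioN1} and identify the limiting functions $F_k$. Setting $x_1=u_1,\dots,x_k=u_k$ and $x_{k+1}=\dots=x_N=1$ in the given Schur generating function, every factor with index exceeding $k$ equals $1-\bb+\bb=1$, so
\[
S_{\rho_N}(u_1,\dots,u_k,1^{N-k})=\Bigl(\mathbf{E}_{\BB}\prod_{i=1}^k(1-\bb+u_i\bb)\Bigr)^{M_N}.
\]
Since $M_N/N\to a$ and the base is a genuine polynomial in $u_1,\dots,u_k$ (its coefficients are the moments $\mathbf{E}[\bb^j]$, $j\le k$, which exist by assumption) that equals $1$ at $1^k$, the $N$-th root converges uniformly in a neighborhood of $1^k$ to
\[
F_k(u_1,\dots,u_k)=\Bigl(\mathbf{E}_{\BB}\prod_{i=1}^k(1-\bb+u_i\bb)\Bigr)^{a},
\]
which is symmetric and analytic there. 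I emphasize that, because the expectation does not factor over $i$ for genuinely random $\bb$, this $F_k$ is \emph{not} of the exponential--product form covered by Corollary~\ref{cor:sanity}; the full strength of Theorem~\ref{th:LLN} is required.

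Second, I would feed $F_k$ into the moment formula \eqref{eq:LLN-gen}, whose key ingredient is $\partial_1 F_{l+1}$ evaluated at the roots-of-unity points $(1+u,1+uw_{l+1},\dots,1+uw_{l+1}^l)$. Writing $Q:=\mathbf{E}_{\BB}\prod_i(1-\bb+u_i\bb)$ so that $\partial_1 F_{l+1}=aQ^{a-1}\partial_1 Q$, I observe that at these points each factor becomes $1+u\,w_{l+1}^{\,j}\bb$, and the cyclotomic identity $\prod_{j=0}^{l}(1-t\,w_{l+1}^{\,j})=1-t^{l+1}$ (equivalently Lemma~\ref{lem:roots-simplifying-sym}) collapses the full product to $Q=1-(-u)^{l+1}\mathbf{E}[\bb^{l+1}]=1+O(u^{l+1})$ and the product missing the first factor to
\[
\partial_1 Q=\mathbf{E}_{\BB}\frac{\bb}{1+u\bb}+O(u^{l+1}).
\]
Hence $\partial_1 F_{l+1}\equiv a\,\mathbf{E}_{\BB}[\bb/(1+u\bb)]\pmod{u^{l+1}}$, which is all that survives under $\partial_u^{\,l}\big\vert_{u=0}$. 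This roots-of-unity collapse of a genuinely non-factorized symmetric function is the heart of the argument and the step I expect to be the main obstacle to organize cleanly.

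Third, I would convert the resulting sum into the advertised contour integral exactly as in the free-cumulant computation leading to \eqref{eq:3} (that is, as in \cite[Eq.~6.2]{GorinBufetov2013free}): the sum over $l$ becomes
\[
\frac{1}{2\pi\sqrt{-1}\,(k+1)}\oint_{\mathcal{C}_1}\frac{dz}{z}\Bigl(z\,\partial_1 F_{k+1}\bigl(z,1+w_{k+1}(z-1),\dots,1+w_{k+1}^k(z-1)\bigr)+\tfrac{z}{z-1}\Bigr)^{k+1}.
\]
Running the same roots-of-unity simplification in the variable $z$ near $1$ (each factor becomes $1+\bb\,w_{k+1}^{\,j}(z-1)$, and the missing-factor product gives $\mathbf{E}_{\BB}[\bb/(1-\bb+\bb z)]+O((z-1)^{k+1})$) replaces $z\,\partial_1 F_{k+1}$ by $a z\,\mathbf{E}_{\BB}[\bb/(1-\bb+\bb z)]$, producing precisely $\mathcal{F}(z)$ of \eqref{eq:LLN-F-definition}. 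Finally I would check that dropping the $O((z-1)^{k+1})$ error leaves the residue at $z=1$ unchanged: if $B(z)$ denotes the true integrand base and $\mathcal F(z)$ its simplification, then $B-\mathcal F=O((z-1)^{k+1})$, while both $B$ and $\mathcal F$ carry the same simple pole $z/(z-1)$, so $B^{k+1}-\mathcal F^{k+1}=(B-\mathcal F)\sum_{j=0}^k B^{j}\mathcal F^{k-j}=O((z-1)^{k+1})\cdot O((z-1)^{-k})=O(z-1)$ is analytic at $z=1$; hence $\tfrac1z(B^{k+1}-\mathcal F^{k+1})$ contributes no residue on $\mathcal{C}_1$. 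For fixed $k$ only the moments $\mathbf{E}[\bb^j]$ with $j\le k+1$ enter, so the residue is a finite, well-defined quantity, yielding \eqref{eq:LLN-contour-gen} and completing the proof.
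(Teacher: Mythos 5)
Your proposal is correct and follows essentially the same route as the paper's proof: verify hypothesis \eqref{assumptioN1} of \Cref{th:LLN} with $F_k=\bigl(\operatorname{\mathbf{E}}_{\BB}\prod_{i=1}^k(1-\bb+u_i\bb)\bigr)^a$, collapse the symmetric product at the roots-of-unity points (your cyclotomic identity is exactly the content of \Cref{lem:roots-simplifying-sym}) so that only $a\ssp\operatorname{\mathbf{E}}_{\BB}[\bb/(1-\bb+\bb u_1)]$ survives, and then pass to the contour integral via the argument of \cite[Equation 6.2]{GorinBufetov2013free}. Your explicit check that the $O((z-1)^{k+1})$ error terms contribute no residue is a detail the paper leaves implicit, but it is the same argument.
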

\begin{proof}
We have
\begin{equation*}
    \lim_{N \rightarrow \infty} \sqrt[N]{S_{\rho_N} (u_1,
    \ldots, u_k, 1^{N - k})} = \left( \operatorname{\mathbf{E}}_{\BB} \prod_{i=1}^k \left( 1 - \bb + u_i \bb \right) \right)^{a},
  \end{equation*}
	and this is our function $F_k(u_1,\ldots,u_k  )$.
Therefore, in the notation of \Cref{th:LLN}, we have
\begin{equation*}
    \pa_1 F_k (u_1, \dots, u_k) = a \operatorname{\mathbf{E}}_{\BB} \left(  \prod_{i=1}^k \left( 1 - \bb + u_i \bb \right) \right)^{a-1} \operatorname{\mathbf{E}}_{\BB} \left( \frac{\bb}{1 - \bb + u_1 \bb} \left( \prod_{i=1}^k \left( 1 - \bb + u_i \bb \right) \right) \right) .
  \end{equation*}
Noting that
$\prod_{i=1}^k \left( 1 - \bb + u_i \bb \right)$ is a symmetric function in all variables and applying \Cref{lem:roots-simplifying-sym}, we see that its nonzero degree terms in the $u_i$'s will not contribute after the symmetrization. This, only the expression
$$
a \left( \operatorname{\mathbf{E}}_{\BB} \left( \frac{\bb}{1 - \bb + u_1 \bb} \right) \right)
$$
contributes to the limit.
Applying \Cref{th:LLN} and the argument from \cite[Equation 6.2]{GorinBufetov2013free}, we arrive at the result.
\end{proof}

A key step in recovering the limiting density of the empirical measure from its moments is the analysis of the equation $\mathcal{F}(z)=y$. For the moment formula \eqref{eq:LLN-contour-gen}, it was shown, see e.g.\ \cite[Lemma~4.1 and Theorem~4.3]{BufetovKnizel2018}, that if, for each $y\in\mathbb{R}$, this equation has a unique complex root $z(y)$ lying in the upper half-plane, then the density of the limiting measure is given by $\frac{1}{\pi}\ssp\mathrm{Arg}\bigl(z(y)\bigr)$.

To apply this to the
Aztec diamond,
one needs a change of variables in the function $\mathcal{F}(z)=\mathcal{F}(z;a)$ \eqref{eq:LLN-F-definition}
and the corresponding equation. Namely, one needs to find the solution $z=z(\alpha,y)$
in the complex upper half-plane to the equation
\begin{equation}
	\label{eq:Aztec-diamond-equation-F-y}
	\mathcal{F}(z;\alpha^{-1}-1)=\alpha^{-1} y,
\end{equation}
where
$\alpha, y\in[0,1]$ serve as coordinates in the Aztec diamond.
A few examples of the limiting density
\begin{equation}
	\label{eq:limit-shape-density}
	(\alpha,y)\mapsto \frac{1}{\pi}\ssp \mathrm{Arg}\bigl(z(\alpha,y)\bigr)
\end{equation}
(which is a suitable linear transformation of the domino height function given in
\Cref{fig:height-function})
are presented in \Cref{fig:limit-shape-examples} (recall that the edge weights
are related to the parameters as $W_i=\bb_i/(1-\bb_i)$).
Note that for non-discrete distributions $\BB$, equation \eqref{eq:Aztec-diamond-equation-F-y}
can be solved only numerically.
On the other hand, the arctic curve has an exact parametrization,
expressing $(\alpha,y)$ as functions of the 
double root $z$ of the equation \eqref{eq:Aztec-diamond-equation-F-y}. 
We have used this parametrization to plot the arctic curves in \Cref{fig:limit-shape-examples}.

\begin{figure}[htpb]
\centering
\begin{tabular}{ccc}
\includegraphics[width=0.2\textwidth]{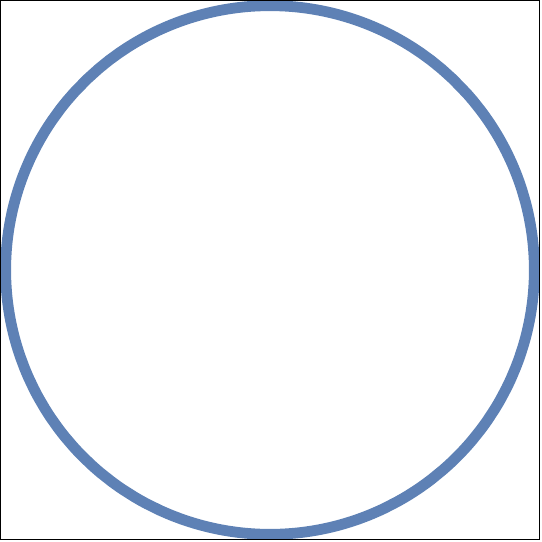} &
\includegraphics[width=0.2\textwidth]{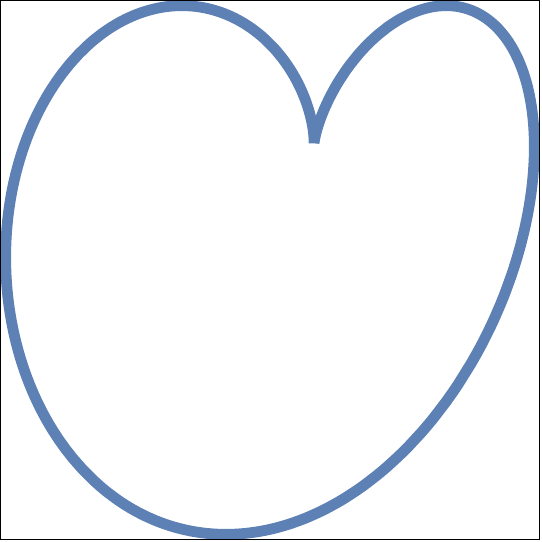} &
\includegraphics[width=0.2\textwidth]{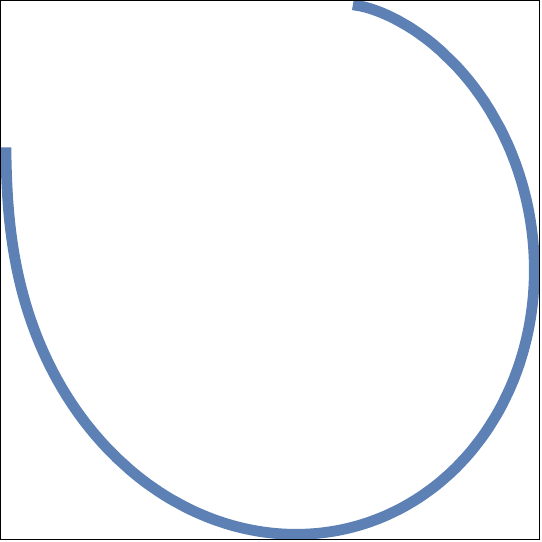} \\
\includegraphics[width=0.32\textwidth]{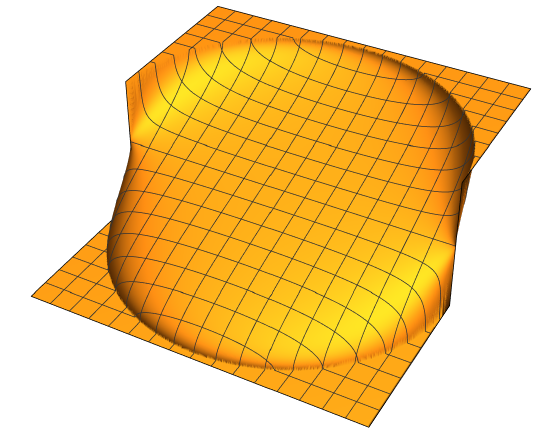} &
\includegraphics[width=0.32\textwidth]{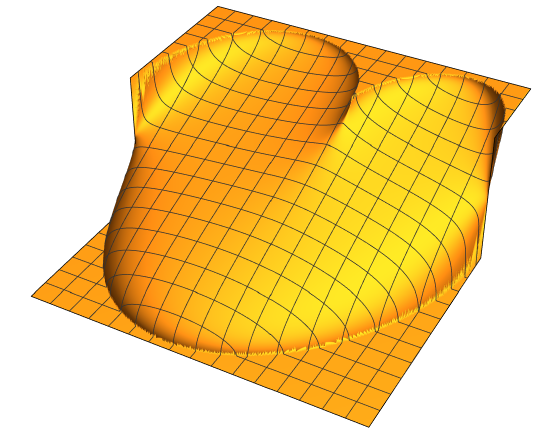} &
\includegraphics[width=0.32\textwidth]{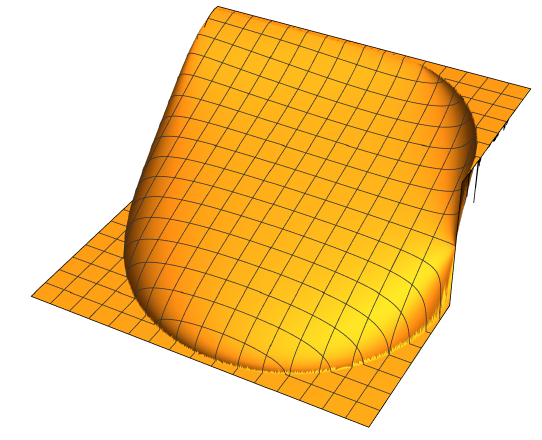}
\end{tabular}
\caption{Arctic curves (top row) and plots of the limiting density \eqref{eq:limit-shape-density} (bottom row) for
domino tilings of the Aztec diamond with constant edge weights $W_i\equiv 1$ (left),
i.i.d.\ Bernoulli edge weights with
$\operatorname{\mathbf{P}}(W_i=\frac12)=\operatorname{\mathbf{P}}(W_i=5)=\frac{1}{2}$ (middle),
and i.i.d.\ uniform edge weights on $[0,2]$ (right).
These plots are obtained from the equation \eqref{eq:Aztec-diamond-equation-F-y}:
arctic curves are the sets $(\alpha,y)$ for which \eqref{eq:Aztec-diamond-equation-F-y} has a double root in $z$, and
the limiting density comes from the argument of the complex root as in \eqref{eq:limit-shape-density}.
We see agreement with the samples in \Cref{fig:Bernoulli_and_fixed,fig:uniform_random_0_2_300}.}
\label{fig:limit-shape-examples}
\end{figure}

%\begin{example}

%Two different beta's. Not surprisingly, the answer is the same as if half of them are those, and another half is another.

%\end{example}

%\begin{example}

%Uniform measure on some segment.

%\end{example}

\subsection{Random Matrix degeneration}
\label{sec:rand-Matr-degen}

It is well known (see e.g.\ \cite{GuionnetMaida2005},
\cite{GorinBufetov2013free}) that the framework of Schur
generating functions admits a degeneration to results on
random matrices. We now present the degeneration of
\Cref{th:LLN} explicitly.

Let $A$ be a random Hermitian $N \times N$ matrix with
(possibly random) eigenvalues $\{\lambda_1 (A) \leq \ldots
\leq \lambda_N (A)\}$. Its \textit{Harish-Chandra transform}
(also known as a multivariate Bessel generating function) is
defined by
\begin{equation}
  \mathbb E [H \text{} C (x_1, \ldots, x_N ; \lambda_1 (A), \ldots, \lambda_N (A))] \coloneqq \mathbb E \int_{U (N)} \exp (\tmop{Tr} (A \text{} U \text{} B
  \text{} U^{\ast})) \ssp\tmmathbf{m}_N (d \text{} U), \label{intro-Haarmeasure}
\end{equation}
where $B$ is a deterministic diagonal matrix with eigenvalues $x_1, x_2, \dots, x_N\in \mathbb{C}$,
and the integration is with respect to a
Haar-distributed $N\times N$
unitary matrix
$U\in U(N)$.
We refer to
\cite{GorinBufetov2013free} for the definition and a detailed discussion,
see also \cite{BufetovZografos2024}
for recent applications of the transform.

\begin{theorem}
\label{th:LLN-rand-Mat}
Let $A=A_N$ be a random Hermitian matrix of size $N$. Assume that there exists a sequence
  of symmetric functions $\Phi_k (x_1, x_2, \dots, x_k)$, $k \ge1$, analytic
  in a complex neighborhood of $(0^k)$, such that for every fixed $k\ge1$ one has
  \begin{equation}
    \lim_{N \rightarrow \infty} \sqrt[N]{\operatorname{\mathbf{E}}[H \text{}
    C (x_1, \ldots, x_r, 0^{N - k} ; \lambda_1 (A), \ldots, \lambda_N (A))]}  =  \Phi_k (x_1, x_2, \dots, x_k), \label{eq:matrix-assumption}
  \end{equation}
uniformly in a complex neighborhood of $0^k$. Then the
  random atomic
	measure $\frac{1}{N} \sum_{i=1}^N \delta \left( \frac{\lambda_i (A)}{N} \right)$ converges, as $N \rightarrow \infty$, in
  probability, in the sense of moments to a deterministic probability measure
  $\tmmathbf{\nu}$ on $\mathbb{R}$, with moments $( \tmmathbf{\nu}_k)_{k \ge1}$ given by
  \begin{equation*}
  \tmmathbf{\nu}_k = \sum_{l = 0}^k \binom{k}{l} \frac{1}{(l + 1) !}
		 \ssp\pa_u^{\ssp l}  \left[ \left( \pa_1 \Phi_{l+1} (u, u
		 w_{l+1}, u w_{l+1}^2, \ldots, u w_{l+1}^l )
		 \right)^{k-l} \right] \bigg\vert_{u=0}.
  \end{equation*}
\end{theorem}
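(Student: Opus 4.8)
The plan is to follow the proof of \Cref{th:LLN} line by line, with three substitutions: the Schur generating function is replaced by the Harish--Chandra transform, the base point $1^N$ by $0^N$, and the multiplicative operators $u_i\pa_i$ by the plain derivatives $\pa_i$. The first task is to establish the exact analogue of \Cref{prop:eigen}. By the Harish--Chandra--Itzykson--Zuber formula (see \cite{GorinBufetov2013free}), the transform defined in \eqref{intro-Haarmeasure} has the determinantal form
\[
HC(x_1,\dots,x_N;\lambda_1,\dots,\lambda_N)
= c_N\,\frac{\det\bigl[e^{x_i\lambda_j}\bigr]_{i,j=1}^N}{V_N(\vec{x})\,V_N(\vec{\lambda})},
\qquad c_N=\prod_{p=1}^{N-1}p!,
\]
normalized so that $HC(0^N;\vec{\lambda})=1$. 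Introducing the operator
\[
\bigl(\mathcal{D}_k^{\mathrm{Mat}} f\bigr)(x_1,\dots,x_N)
\coloneqq \frac{1}{V_N(\vec{x})}\sum_{i=1}^N \pa_i^{\,k}\bigl[V_N(\vec{x})\,f(\vec{x})\bigr],
\]
the identity $\pa_{i}^{k} e^{x_i\lambda_j}=\lambda_j^k e^{x_i\lambda_j}$ shows at once that $\mathcal{D}_k^{\mathrm{Mat}}$ acts diagonally on $HC(\cdot\,;\vec{\lambda})$ with eigenvalue $\sum_{j=1}^N\lambda_j^k$. Hence, once we justify differentiating under the expectation, evaluating $\mathcal{D}_k^{\mathrm{Mat}}\operatorname{\mathbf{E}}[HC]$ at $\vec{x}=0^N$ produces $\operatorname{\mathbf{E}}\bigl[\sum_{j}\lambda_j(A)^k\bigr]$, which is exactly $N^{k+1}$ times the expected $k$-th moment of the empirical spectral measure $\tfrac1N\sum_i\delta(\lambda_i(A)/N)$.

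With this eigenrelation in place, I would reproduce Step 1 of the proof of \Cref{th:LLN}. Commuting the derivatives in $\mathcal{D}_k^{\mathrm{Mat}}$ past the Vandermonde yields the precise analogues of the expansions \eqref{sum2}--\eqref{sum3}, the sole change being that each $u_i\pa_i$ becomes $\pa_i$, so the multiplicative prefactors $u_{b_0}^k$ of the Schur case are now absent. Writing $\operatorname{\mathbf{E}}[HC]=\bigl(\sqrt[N]{\operatorname{\mathbf{E}}[HC]}\bigr)^N$ and differentiating, assumption \eqref{eq:matrix-assumption} gives $\pa_{1}\sqrt[N]{\operatorname{\mathbf{E}}[HC]}\to\pa_1\Phi_k$ uniformly near $0^k$, and the leading $N^{k+1}$ contribution comes, as before, from the terms carrying a single first derivative raised to the power $k-m$. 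Applying the symmetrization identity \eqref{eq:basic-sym} of \Cref{lem:main-sym}---this time used directly at the base point $0^{m+1}$, rather than through its shifted corollary---collapses the sum over distinct indices into the root-of-unity expression and gives
\[
\tmmathbf{\nu}_k=\sum_{l=0}^k\binom{k}{l}\frac{1}{(l+1)!}\,\pa_u^{\,l}\Bigl[\bigl(\pa_1\Phi_{l+1}(u,u w_{l+1},\dots,u w_{l+1}^l)\bigr)^{k-l}\Bigr]\Big\vert_{u=0}.
\]
The disappearance of the factor $(1+u)^k$ present in \eqref{eq:LLN-gen} is exactly the footprint of the missing $u_i^k$ prefactor; every other step is identical.

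The concentration argument is Step 2 of \Cref{th:LLN} with no essential modification: applying $\mathcal{D}_k^{\mathrm{Mat}}$ twice and using the analogue of the splitting \eqref{eq:Dk2}, one verifies that the cross terms factorize to leading order in $N$, yielding $\lim_{N\to\infty}\operatorname{\mathbf{E}}[\,\cdots\,]^2=(\tmmathbf{\nu}_k)^2$ in the spirit of \eqref{16} and hence convergence in probability. The main obstacle is analytic rather than combinatorial: one must justify interchanging the unbounded operator $\mathcal{D}_k^{\mathrm{Mat}}$ with the expectation $\operatorname{\mathbf{E}}$ over the random matrix $A$, and upgrade the assumed convergence \eqref{eq:matrix-assumption} of the $N$-th roots on a fixed neighborhood of $0^k$ to uniform convergence of all the partial derivatives entering the expansion. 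Both are handled as in \cite{GorinBufetov2013free}, \cite{bufetov2016fluctuations} through the Cauchy estimates furnished by \Cref{lem:lemma1}; the one point genuinely special to the matrix setting is that the determinantal formula above, together with the finiteness of all moments of the limit, must be invoked to make the differentiations under $\operatorname{\mathbf{E}}$ legitimate.
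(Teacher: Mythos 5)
Your proposal is correct and follows essentially the same route as the paper, whose proof simply states that one repeats the arguments of \Cref{th:LLN} with the differential operators adjusted (plain $\pa_i$ in place of $u_i\pa_i$, base point $0$ in place of $1$), the only change in the answer being the absent factor $(1+u)^k$. Your additional details --- the eigenrelation via the Harish--Chandra--Itzykson--Zuber determinantal formula, the use of \Cref{lem:main-sym} directly at $0^{m+1}$ rather than via its shifted corollary, and the justification of differentiating under the expectation --- are exactly the ``minor distinctions'' the paper alludes to, worked out correctly.
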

\begin{proof}
The simplest way to prove the result is to repeat all the arguments in the proof of \Cref{th:LLN}, adjusting them to accommodate the slightly different differential operators; see, e.g., \cite{GorinBufetov2013free} and \cite[Section~3]{BufetovZografos2024}. No substantial modifications are required, and the resulting formula differs only in that the factor $(1+u)^k$ is absent, owing to a minor distinction in the differential operators used.
\end{proof}

\section{Cental Limit Theorem}
\label{sec:CLT-gen}

\subsection{Statement}

In this section, we prove a general Central Limit Theorem
for probability measures on signatures defined by their
Schur generating functions. We continue to work in the
limit regime introduced in the previous section (which in an application
will correspond to domino tilings of the Aztec diamond with
i.i.d.\ edge weights coming from a fixed distribution).
This regime
leads to particle systems with fluctuations on
the scale~$1/\sqrt{N}$.

\begin{definition}
\label{def:SGF-appropriate}
The sequence of probability measures $\rho=\rho_N$ on $\GT_N$, $N\ge1$, is called \textit{asymptotically appropriate}, if
there exists a sequence of symmetric functions $F_k^{(\rho)}
(z_1, \ldots, z_k)$, $k\ge 1$, which are analytic in a complex
neighborhood of $1^k$, such that for every fixed $k$, one
has
  \begin{equation}
    \lim_{N \rightarrow \infty} \sqrt[N]{S_{\rho_N} (u_1,
    \ldots, u_k, 1^{N - k})} = F_k^{(\rho)} (u_1, \ldots, u_k), \label{assumptioN}
  \end{equation}
uniformly in a complex neighborhood of $1^k$.
\end{definition}

\begin{theorem}
\label{th:CLT-gen} Let $\rho = (\rho_N)_{N \ge1}$ be an asymptotically appropriate sequence of probability measures on $\GT_N$.
Then the vector of normalized moments \eqref{eq:p_k-rho-moments-in-text},
\begin{equation}
\label{eq:scale-CLT}
\left( \frac{ p_k^{(\rho_N)} - \raisebox{-1pt}{$\operatorname{\mathbf{E}}$} ( p_k^{(\rho_N)} ) }{N^{k+\frac12 } } \right)_{k \ge 1}
\end{equation}
converges to a mean zero Gaussian vector, with covariance given by
% \begin{multline}
% \label{eq:cov-general-statement}
% \lim_{N \to \infty} \frac{1}{N^{l_1+l_2+1} } \left( \operatorname{\mathbf{E}} \left(  p_{l_1}^{(\rho)} p_{l_2}^{(\rho)} \right) -  \operatorname{\mathbf{E}} \left(  p_{l_1}^{(\rho)} \right) \operatorname{\mathbf{E}} \left(  p_{l_2}^{(\rho)} \right) \right)
% = \sum_{q=0}^{r-1} \sum_{r=0}^{l-1} \frac{k l}{ (q+1)! (r+1)!} \binom{l-1}{r} \binom{k-1}{q}
%  \\ \left. \times \pa_1^q \pa_2^r \left[ \left( \pa_1 \pa_2 F_{q+r+2}^{(\rho)} \left( 1+x_1, 1+x_2, 1+x_1 w_{q+1}, \dots, 1+x_1 w_{q+1}^q, 1+x_2 w_{r+1}, \dots, 1+x_2 w_{r+1}^r \right)
% \right. \right. \right.
% \\ \left. - \pa_1 F_{q+1}^{(\rho)} \left( 1+x_1, 1+x_1 w_{q+1}, \dots, 1+x_1 w_{q+1}^q \right) \cdot \pa_2 F_{r+1}^{(\rho)} \left( 1+x_2, 1+x_2 w_{r+1}, \dots, 1+x_2 w_{r+1}^r \right) \right)
%  \\
% \times (1+x_{1})^k \left( \pa_1 F_{q+1}^{(\rho)} (1+x_1, 1+x_1 w_{q+1}, \dots, 1+x_1 w_{q+1}^q) \right)^{k-1-q}
% \\
% \left. \left.
% \times (1+x_{2})^l \left( \pa_2 F_{r+1}^{(\rho)} (1+x_2, 1+x_2 w_{r+1}, \dots, 1+x_2 w_{r+1}^r) \right)^{l-1-r}
% \right] \right|_{x_1=0, x_2=0}.
% \end{multline}
\begin{align}
&\lim_{N\to\infty}\frac{1}{N^{k+l+1}}
 \bigl(
   \operatorname{\mathbf{E}}\bigl(p_{k}^{(\rho_N)}
   p_{l}^{(\rho_N)}\bigr)-
   \operatorname{\mathbf{E}}\bigl(p_{k}^{(\rho_N)}\bigr)
   \operatorname{\mathbf{E}}\bigl(p_{l}^{(\rho_N)}\bigr)
 \bigr)                                                      \nonumber\\
&=\sum_{q=0}^{k-1}\sum_{r=0}^{l-1}
     \frac{k\,l}{(q+1)!\,(r+1)!}
     \binom{l-1}{r}\binom{k-1}{q}                            \nonumber\\
&\times\partial_1^{q}\partial_2^{r}\Bigl[
   \bigl(
     \partial_1\partial_2 F_{q+r+2}^{(\rho)}\bigl(
       1+x_1,1+x_2,
       1+x_1 w_{q+1},\dots,1+x_1 w_{q+1}^{q},
       1+x_2 w_{r+1},\dots,1+x_2 w_{r+1}^{r}\bigr)          \nonumber\\
&\quad-\partial_1 F_{q+1}^{(\rho)}\bigl(
       1+x_1,1+x_1 w_{q+1},\dots,1+x_1 w_{q+1}^{q}\bigr)
      \,\partial_2 F_{r+1}^{(\rho)}\bigl(
       1+x_2,1+x_2 w_{r+1},\dots,1+x_2 w_{r+1}^{r}\bigr)
   \bigr)                                                   \nonumber\\
&\qquad\qquad \times(1+x_{1})^{k}
   \bigl(\partial_1 F_{q+1}^{(\rho)}(
      1+x_1,1+x_1 w_{q+1},\dots,1+x_1 w_{q+1}^{q})
     \bigr)^{k-1-q}                                         \nonumber\\
&\qquad \qquad \times(1+x_{2})^{l}
   \bigl(\partial_2 F_{r+1}^{(\rho)}(
      1+x_2,1+x_2 w_{r+1},\dots,1+x_2 w_{r+1}^{r})
     \bigr)^{l-1-r}
 \Bigr]\biggr|_{x_1=x_2=0}.
 \label{eq:cov-general-statement}
\end{align}
\end{theorem}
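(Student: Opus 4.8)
The plan is to prove the CLT by the method of moments and cumulants, exploiting the fact from \Cref{prop:eigen} that the operators $\mathcal{D}_k$ act diagonally on Schur functions. This yields the identity
\[
\mathcal{D}_{k_1}\cdots \mathcal{D}_{k_r} S_{\rho_N}\big\vert_{u_1=\cdots=u_N=1}
= \mathbf{E}\bigl[p_{k_1}^{(\rho_N)}\cdots p_{k_r}^{(\rho_N)}\bigr],
\]
so that all joint moments of the $p_k^{(\rho_N)}$ are encoded in iterated applications of the $\mathcal{D}$-operators to $S_{\rho_N}$ at $1^N$. Since the first moments were already treated in \Cref{th:LLN}, it remains to (i) show that, after normalizing each factor by $N^{k+1/2}$, the joint cumulants of order $\ge 3$ vanish as $N\to\infty$, which gives Gaussianity, and (ii) compute the limiting covariance and match it to \eqref{eq:cov-general-statement}. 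As in the proof of \Cref{th:LLN}, I would write $S_{\rho_N}=\bigl(\sqrt[N]{S_{\rho_N}}\bigr)^{N}$, apply the Leibniz rule when the $\mathcal{D}$-operators differentiate this $N$-th power, and invoke \Cref{lem:lemma1} to pass to the limit of each symmetrized term.

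For the Gaussianity step, I would organize the terms produced by $\mathcal{D}_{k_1}\cdots\mathcal{D}_{k_r}$ according to how their differentiation indices cluster. The leading, fully disconnected contribution factorizes into $\prod_i \mathbf{E}[p_{k_i}^{(\rho_N)}]$ and is removed upon passing to cumulants. The essential power-counting claim is that a cluster connecting all $r$ operators contributes at order $N^{(k_1+\cdots+k_r)+1}$: relative to the fully disconnected order $N^{\sum k_i+r}$, each of the $r-1$ ``connections'' arises when a derivative of one operator hits a $u$-dependent coefficient produced by another operator rather than the $S_{\rho_N}$-factor itself, costing exactly one power of $N$. For $r=2$ this is one power larger than the covariance order $N^{k+l}$ of the deterministic setting of \Cref{th:BG2-one-level}, the extra power reflecting the $O(N)$ size of the connected two-point structure of the generating function in the present regime. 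Dividing the connected $r$-cumulant by the normalization $N^{\sum_i(k_i+1/2)}=N^{\sum k_i+r/2}$ leaves $O(N^{1-r/2})$, which tends to zero for every $r\ge 3$. This balance is exactly what places the fluctuation scale at $N^{k+1/2}$ rather than $N^{k}$.

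For the covariance ($r=2$), I would isolate the connected part of $\mathcal{D}_k\mathcal{D}_l S_{\rho_N}\big\vert_{1^N}$ at order $N^{k+l+1}$. Writing $G:=\sqrt[N]{S_{\rho_N}}$, the disconnected terms, in which each derivative of the $\mathcal{D}_l$-group factors through a $\partial G$ independently of the $\mathcal{D}_k$-group, reproduce $\mathbf{E}[p_k]\,\mathbf{E}[p_l]\sim N^{k+l+2}$ and cancel. The surviving connected terms are those in which one derivative from each group combines into a single mixed derivative of one $G$-factor, producing the connected two-point function $\partial_1\partial_2 F_{q+r+2}^{(\rho)}-\partial_1 F_{q+1}^{(\rho)}\,\partial_2 F_{r+1}^{(\rho)}$ appearing inside \eqref{eq:cov-general-statement}. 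I would then symmetrize the two index groups independently, applying the corollary \eqref{eq:corol-sym} of \Cref{lem:main-sym} once with the roots of unity $w_{q+1}$ and once with $w_{r+1}$, and discard the non-contributing polynomial terms via \Cref{lem:roots-simplifying-sym}. Collecting the binomial and factorial factors then yields \eqref{eq:cov-general-statement}. Note that the Gaussian-Free-Field kernel $1/(z-w)^2$ of \Cref{th:BG2-one-level} now sits at the subleading order $N^{k+l}$ and hence is absent from the limit; in the product case of \Cref{cor:sanity} the connected term vanishes and the limit degenerates, consistent with fluctuations there living on the smaller scale $N^k$.

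The main obstacle will be the combinatorial bookkeeping in the covariance computation: pinning down the unique family of terms that survive at the new order $N^{k+l+1}$, carrying the two independent root-of-unity symmetrizations through simultaneously, and matching the exact coefficients $\frac{kl}{(q+1)!\,(r+1)!}\binom{k-1}{q}\binom{l-1}{r}$. A secondary difficulty is that, unlike in \cite{GorinBufetov2013free} and the product case of \Cref{cor:sanity}, the functions $F_k^{(\rho)}$ carry no factorization structure, so every estimate of subleading terms must be justified directly from the bare analyticity and uniform-convergence hypothesis \eqref{assumptioN}; in particular one must verify that the connected term genuinely dominates rather than cancels, so that the covariance sits at scale $N^{k+l+1}$.
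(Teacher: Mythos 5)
Your proposal is correct and follows essentially the same route as the paper's proof: the diagonal action of the $\mathcal{D}_k$ operators, the $S_{\rho_N}=(\sqrt[N]{S_{\rho_N}})^N$ Leibniz expansion with power counting in $N$ (your ``clusters'' are the paper's $\mathcal{F}_{(l)}$/$\GG_{(l_1,l_2)}$ functions with their $N$-degree bounds, and your vanishing higher cumulants correspond to the paper's Wick-pairing decomposition), and the covariance extracted from the connected two-point term $\partial_1\partial_2\sqrt[N]{S_N}\cdot\sqrt[N]{S_N}-\partial_1\sqrt[N]{S_N}\,\partial_2\sqrt[N]{S_N}$ via two independent applications of the root-of-unity symmetrization lemma. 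Your side remarks --- that the $1/(z-w)^2$ kernel is subleading at this scale and that the limit may degenerate to zero in the factorized case --- also match the paper's discussion.
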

\begin{remark}
In \Cref{th:CLT-gen} we permit the limiting Gaussian vector
to be identically zero. This situation arises, for instance,
when the theorem is applied to measures falling within the
limit regime of \cite{bufetov2016fluctuations} (and in particular, to
domino tilings of the Aztec diamond with deterministic weights or random weights with variance decaying as $1/M$).
The theorem
becomes nontrivial once one shows that the covariance at
the scale \eqref{eq:scale-CLT} remains non-zero in the
limit.
\end{remark}

\begin{remark}
\Cref{th:CLT-gen} can also be degenerated to a random matrix setting, similarly to \Cref{sec:rand-Matr-degen},
but we do not state the corresponding result explicitly.
\end{remark}

\subsection{First and second moment}

Throughout the rest of this section, we prove \Cref{th:CLT-gen}.
Its proof is close to that in
\cite{bufetov2016fluctuations}. In some aspects, it is even
easier since there are fewer terms that contribute to the
limiting covariance in our limit regime. Therefore, we do
not repeat all the details of \cite[Sections~5~and~6]{bufetov2016fluctuations}, but rather provide a sketch of
them and focus on those parts that differ from the argument
there. We also keep the same notation as in \cite{bufetov2016fluctuations}.

\begin{definition}
\label{def:degree-N}
For any $N \ge 1$, let $F_N (\vec{x})$,
$\vec{x}=(x_1,x_2, \dots, x_N)$,
be a function of~$N$~variables. For
$D\in \mathbb{Z}$, we will say that the sequence $(F_N)_{N\ge1}$
\textit{has
$N$-degree at most $D$}, if for any integer $s \ge 0$ (not depending
on $N$) and any indices $i_1, \dots, i_s$, we have
\begin{equation}
\label{eq:degree-def}
\lim_{N \to \infty} \frac{1}{N^D}  \pa_{i_1} \dots \pa_{i_s} F_N (\vec{x}) \big\vert_{\vec{x}=1} = c_{i_1, \dots, i_s},
\end{equation}
for some constants $c_{i_1, \dots, i_s}$. In particular, the limit
$$
\lim_{N \to \infty} \frac{1}{N^D} \left. F_N (\vec{x}) \right|_{\vec{x}=1}
$$
should exist (this corresponds to $s=0$).

Similarly, we will say that the sequence $(F_N)_{N\ge1}$
\textit{has $N$-degree less than $D$}, if for any $s \ge 0$ (not depending on $N$) and any indices $i_1, \dots, i_s$, we have
$$
\lim_{N \to \infty} \frac{1}{N^D}  \pa_{i_1} \dots \pa_{i_s} F_N (\vec{x}) \big\vert_{\vec{x}=1} = 0.
$$
\end{definition}

Let $\rho = \{ \rho_N \}$ be an asymptotically appropriate
sequence of measures on $\GT_N$ with the Schur generating
function $S_N(\vec x) = S_{\rho_N} (x_1, \dots, x_N)$.
For an integer $l>0$, let us introduce the function
\begin{equation}
\label{eq:def-Fl}
\mathcal{F}_{(l)} (\vec{x}) \coloneqq \frac{1}{S_N (\vec{x}) V_N (\vec{x}) } \sum_{i=1}^N \left( x_i \pa_i \right)^{l} V_N (\vec{x})
\ssp S_N (\vec{x}).
\end{equation}

\begin{lemma}
\label{lem:sum-Schur-Vand-afterDiff}
The functions $\mathcal{F}_{(l)} (\vec{x})$ have $N$-degree at most $l+1$.
\end{lemma}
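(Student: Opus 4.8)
The plan is to reproduce the leading-order bookkeeping of Step~1 in the proof of \Cref{th:LLN}, but now tracking the order of growth in $N$ instead of extracting a precise leading coefficient. Writing $\mathcal{F}_{(l)}=(\mathcal{D}_l S_N)/S_N$ in terms of the operator $\mathcal{D}_l$ preceding \Cref{prop:eigen}, I would expand the product $V_N S_N$ by the Leibniz rule and divide by $V_N$, exactly as in the passage \eqref{sum}--\eqref{sum2}, to obtain
\[
\mathcal{D}_l S_N=\sum_{m=0}^{l}\binom{l}{m}
\sum_{\substack{j_0,\dots,j_m\\ \text{distinct}}}
\frac{x_{j_0}^{\,l}\,\partial_{j_0}^{\,l-m}S_N}
{(x_{j_0}-x_{j_1})\cdots(x_{j_0}-x_{j_m})}
+(\text{lower }N\text{-degree terms}),
\]
where the lower-degree remainder collects the difference between $(x_i\partial_i)^l$ and $x_i^l\partial_i^l$. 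Dividing by $S_N$ turns each numerator factor into $\partial_{j_0}^{\,l-m}S_N/S_N$, and the analysis then splits into a degree count for these quotients and for the index sums.

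The second ingredient is that $\log S_N$ has $N$-degree at most $1$. This follows from the defining assumption \eqref{assumptioN}: the uniform convergence of $\sqrt[N]{S_{\rho_N}}$ to the analytic limit $F_k$, together with $S_{\rho_N}(1^N)=1$ (so $F_k(1^k)=1\ne 0$), yields via Cauchy estimates the uniform convergence of every derivative of $\tfrac1N\log S_{\rho_N}$ to the corresponding derivative of $\log F_k$; the symmetry of $S_N$ lets one relabel any fixed collection of indices into the first few variables before invoking \eqref{assumptioN}. Expanding $\partial_{j_0}^{\,l-m}S_N/S_N$ as a complete Bell polynomial in the derivatives $\partial_{j_0}^{a}\log S_N$, each of $N$-degree $\le 1$, and noting that a monomial with $\sum_a a\,c_a=l-m$ has at most $l-m$ factors (equality only for $(\partial_{j_0}\log S_N)^{l-m}$), I conclude that $\partial_{j_0}^{\,l-m}S_N/S_N$ has $N$-degree at most $l-m$.

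Combining these, I count: the inner sum ranges over $\sim N^{m+1}$ tuples of distinct indices, and after symmetrizing in $x_{j_0},\dots,x_{j_m}$ to resolve the apparent poles at $\vec x=1$, \Cref{lem:lemma1} guarantees that each symmetrized block (as in the reduction \eqref{sum3}) has a finite limit there. Hence the $m$-th contribution has $N$-degree at most $(m+1)+(l-m)=l+1$, uniformly in $m$, which settles the case $s=0$ of \Cref{def:degree-N}. For the extra derivatives $\partial_{i_1}\cdots\partial_{i_s}$ at fixed indices, I would argue term by term: for the $\sim N^{m+1}$ tuples disjoint from $\{i_1,\dots,i_s\}$ all derivatives land on $\partial_{j_0}^{\,l-m}S_N/S_N$, and since differentiating a Bell polynomial in $\log S_N$ by a new variable preserves both the number of factors and the $N$-degree $\le 1$ of each factor, the bound $l-m$ is preserved; for the remaining $\sim N^{m}$ pinned tuples, derivatives may hit a denominator $(x_{j_0}-x_{j_b})^{-1}$, but these contribute only order $N^{m}\cdot N^{l-m}=N^{l}$, strictly below $N^{l+1}$. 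Thus no derivative raises the $N$-degree above $l+1$, and all required limits exist.

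The main obstacle I expect is the clean justification of the symmetrization step: the individual summands are genuinely singular at $\vec x=1$, and only after grouping them into the symmetric combinations of \Cref{lem:lemma1} does one get finite limits and a well-defined $N$-degree. The accompanying subtlety is the accounting for the added derivatives $\partial_{i_1}\cdots\partial_{i_s}$ -- one must verify that hitting the singular denominators only touches the negligible ($N^{m}$ rather than $N^{m+1}$) family of pinned index tuples, so that the bound $l+1$ is stable under differentiation. The rest is the routine degree arithmetic already carried out for the leading term in \Cref{th:LLN}.
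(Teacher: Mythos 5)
Your proposal is correct and follows essentially the same route as the paper's own (much briefer) argument: both expand $\mathcal{D}_l$ via the Leibniz rule as in \eqref{sum}--\eqref{sum2}, attribute one factor of $N$ to each index summation arising from the Vandermonde and at most one factor of $N$ to each derivative landing on $S_N$, invoke \Cref{lem:lemma1} to make sense of the singular symmetrized blocks at $\vec x = 1$, and observe that further differentiations cannot raise the resulting power $N^{l+1}$. The only (cosmetic) difference is that you track degrees through $\log S_N$ and Bell polynomials, whereas the paper uses the identity $\partial_i S_N = N(\sqrt[N]{S_N})^{N-1}\partial_i\sqrt[N]{S_N}$; these bookkeeping devices are equivalent since $\log S_N = N\log\sqrt[N]{S_N}$, and your write-up is in fact more detailed than the paper's informal outline.
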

\begin{proof}
The result follows from the argument in \Cref{sec:LLN}. We now give a brief, informal outline.

The power of $N$ can arise either from derivatives applied
to $S_N$ --- we use the identity $\partial_i S_N = N
(\sqrt[N]{S_N})^{\,N-1}\,\partial_i \sqrt[N]{S_N}$, and any
additional derivatives acting on $\partial_i \sqrt[N]{S_N}$
do not raise the power of $N$ --- or from the summations over
indices, since extra indices appear when we differentiate
the Vandermonde determinant. Because the operator involves
one summation and $l$ derivatives, the maximal power of $N$
is $l+1$. After we divide by $S_N(\vec{x})\,V_N(\vec{x})$,
further derivatives cannot increase this power, although,
unlike in \cite{bufetov2016fluctuations}, they do not
necessarily decrease it either.
\end{proof}

For positive integers $l_1,l_2$, let us define one more function via
\begin{equation}
\label{eq:def-GGfunc}
\begin{split}
\GG_{(l_1, l_2)} (\vec{x})
&\coloneqq
l_1
\sum_{r=0}^{l_1-1}
\binom{l_1-1}{r}
\sum_{\{a_1,\dots,a_{r+1}\}\subset[N]}
(r+1)!
\\
&\hspace{40pt}\times
\,Sym_{a_1,\dots,a_{r+1}}
\frac{
x_{a_1}^{l_1}\,
\pa_{a_1}\bigl[\mathcal{F}_{(l_2)}\bigr]\,
\bigl(\pa_{a_1}\sqrt[N]{S_N}\bigr)^{\,l_1-1-r}
}{
(x_{a_1}-x_{a_2})\dotsm(x_{a_1}-x_{a_{r+1}})
}.
\end{split}
\end{equation}
Here, $Sym_{a_1,\dots,a_{r+1}}$ denotes the symmetrization
over the indices $a_1, \dots, a_{r+1}$.
The following lemma clarifies the meaning of this function: it describes the covariance in our probability models.
\begin{lemma}
\label{lem:two-diff-terms}
For any positive integers $l_1, l_2$, we have
\begin{equation}
\label{eq:two-diff-terms}
\frac{1}{V_N S_N} \sum_{i_1=1}^N (x_{i_1} \pa_{i_1})^{l_1} \sum_{i_2=1}^N (x_{i_2} \pa_{i_2})^{l_2} \left[ V_N S_N \right] = \mathcal{F}_{(l_1)} (\vec{x}) \mathcal{F}_{(l_2)} (\vec{x}) + \GG_{(l_1, l_2)} (\vec{x}) + \tilde T (\vec{x}),
\end{equation}
where $\GG_{(l_1, l_2)} (\vec{x})$ has $N$-degree at most $l_1+l_2+1$, and $\tilde T (\vec{x})$ has $N$-degree at most $l_1+l_2$.
\end{lemma}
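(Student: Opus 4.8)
The plan is to reduce the two-operator expression on the left-hand side of \eqref{eq:two-diff-terms} to the single-operator analysis already carried out in \Cref{sec:LLN}, by peeling off one operator and applying the Leibniz rule. Since $\mathcal{F}_{(l_2)}(\vec x)\cdot V_N(\vec x)S_N(\vec x)=\sum_{i_2=1}^N(x_{i_2}\pa_{i_2})^{l_2}[V_NS_N]$ by the very definition \eqref{eq:def-Fl}, the left-hand side of \eqref{eq:two-diff-terms} equals
\begin{equation*}
\frac{1}{V_NS_N}\sum_{i_1=1}^N(x_{i_1}\pa_{i_1})^{l_1}\bigl[\mathcal{F}_{(l_2)}\cdot V_NS_N\bigr].
\end{equation*}
Because $x_{i_1}\pa_{i_1}$ is a derivation, the Leibniz formula $(x_{i_1}\pa_{i_1})^{l_1}[fg]=\sum_{j=0}^{l_1}\binom{l_1}{j}\bigl[(x_{i_1}\pa_{i_1})^j f\bigr]\bigl[(x_{i_1}\pa_{i_1})^{l_1-j}g\bigr]$ applies with $f=\mathcal{F}_{(l_2)}$ and $g=V_NS_N$. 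Dividing by $V_NS_N$ and summing over $i_1$ yields an exact finite decomposition into terms indexed by $j=0,1,\dots,l_1$.

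The $j=0$ term is $\mathcal{F}_{(l_2)}\cdot\frac{1}{V_NS_N}\sum_{i_1}(x_{i_1}\pa_{i_1})^{l_1}[V_NS_N]=\mathcal{F}_{(l_1)}\mathcal{F}_{(l_2)}$, reproducing the leading product. Next I would treat the $j=1$ term,
\begin{equation*}
l_1\sum_{i_1=1}^N\bigl[(x_{i_1}\pa_{i_1})\mathcal{F}_{(l_2)}\bigr]\,\frac{(x_{i_1}\pa_{i_1})^{l_1-1}[V_NS_N]}{V_NS_N},
\end{equation*}
expanding its second factor exactly as in the passage leading to \eqref{sum3}: write $S_N=(\sqrt[N]{S_N})^N$, apply Leibniz once more to split the $l_1-1$ copies of $x_{i_1}\pa_{i_1}$ between $V_N$ and $S_N$, and symmetrize the resulting Vandermonde denominators over the index cluster $\{a_1,\dots,a_{r+1}\}$, where $a_1=i_1$ and $r$ is the number of derivatives that fell on $V_N$. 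By \Cref{lem:lemma1} each symmetrized ratio has a finite limit as $\vec x\to 1^N$, and collecting the terms reproduces precisely the explicit expression \eqref{eq:def-GGfunc} for $\GG_{(l_1,l_2)}$: the binomial $\binom{l_1-1}{r}$, the combinatorial factor $(r+1)!$, and the power $(\pa_{a_1}\sqrt[N]{S_N})^{l_1-1-r}$ all arise from this split and the symmetrization.

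It then remains to control degrees, which I would do as in \Cref{lem:sum-Schur-Vand-afterDiff}. For the $j=1$ term: $(x_{i_1}\pa_{i_1})\mathcal{F}_{(l_2)}$ has $N$-degree at most $l_2+1$, the second factor has $N$-degree at most $l_1-1$ for each fixed $i_1$, and the outer summation over the free index $i_1$ contributes one further power of $N$; hence $\GG_{(l_1,l_2)}$ has $N$-degree at most $l_1+l_2+1$, as claimed. For the terms $j\ge 2$, every additional derivative diverted from $V_NS_N$ onto $\mathcal{F}_{(l_2)}$ costs one power of $N$: a derivative on $S_N$ produces a factor $N$ through $\pa_iS_N=N(\sqrt[N]{S_N})^{N-1}\pa_i\sqrt[N]{S_N}$, which is forfeited once it acts on the degree-capped $\mathcal{F}_{(l_2)}$ instead. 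Thus each such term has $N$-degree at most $l_1+l_2+2-j\le l_1+l_2$. Defining $\tilde T(\vec x)$ to be the sum of the $j\ge 2$ terms together with the subleading (non-maximal-degree) part of the $j=1$ term gives the decomposition \eqref{eq:two-diff-terms} with the stated bounds, in parallel with the off-diagonal expansion used in \eqref{eq:Dk2}.

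The main obstacle is the degree bookkeeping under symmetrization: a priori the Vandermonde denominators are singular at $\vec x=1^N$, so one cannot count powers of $N$ term by term before symmetrizing. The resolution is to invoke \Cref{lem:lemma1} exactly as in the proof of \Cref{th:LLN}, which guarantees that each symmetrized cluster contributes a bounded quantity and that summing over the $\binom{N}{r+1}$ clusters produces the clean factor $N^{r+1}$. The genuinely new point relative to \cite{bufetov2016fluctuations} is that here further differentiation of $\mathcal{F}_{(l_2)}$ need not lower the degree, so one must verify carefully that it is exactly the transfer of a derivative from $V_NS_N$ onto $\mathcal{F}_{(l_2)}$ — and, analogously, the coincidence of the two index clusters in more than one index — that produces the single-power drop separating $\GG_{(l_1,l_2)}$ from $\tilde T$.
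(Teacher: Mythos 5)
Your proof is correct and takes essentially the same route as the paper's: both rewrite the left-hand side as $\frac{1}{V_N S_N}\sum_{i_1}(x_{i_1}\pa_{i_1})^{l_1}\bigl[V_N S_N\,\mathcal{F}_{(l_2)}\bigr]$ and then split according to how many derivations land on $\mathcal{F}_{(l_2)}$ --- none (giving $\mathcal{F}_{(l_1)}\mathcal{F}_{(l_2)}$), exactly one (giving $\GG_{(l_1,l_2)}$ at leading degree $l_1+l_2+1$), or two or more (absorbed into $\tilde T$ at degree at most $l_1+l_2$) --- with the same degree bookkeeping via \Cref{lem:lemma1} and \Cref{lem:sum-Schur-Vand-afterDiff}. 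Your explicit Leibniz expansion with binomial coefficients simply formalizes the case analysis that the paper states in words.
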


\begin{proof}
The proof is analogous to \cite[Lemma 5.7]{bufetov2016fluctuations}. Let us present it in a sketched form.
The left-hand side of \eqref{eq:two-diff-terms} can be written as
\begin{equation}
\label{eq:two-diff-terms-2}
\frac{1}{V_N S_N} \sum_{i_1=1}^N (x_{i_1} \pa_{i_1})^{l_1} \left[ V_N S_N \mathcal{F}_{(l_2)} (\vec{x}) \right].
\end{equation}
The left-hand side of \eqref{eq:two-diff-terms-2} has $l_1$ differentiations. If all of them are applied to $V_N$ or $S_N$, we obtain the first term in the right-hand side. If exactly one differentiation is applied to $\mathcal{F}_{(l_2)}$, this yields a contribution of degree $l_1+l_2+1$; the choice of which differentiation to apply gives rise to the factor $l_1$ in \eqref{eq:def-GGfunc}, while the other factors arise from choosing which differentiations are applied to the Vandermonde. If more than one differentiation is applied to $\mathcal{F}_{(l_2)}$, then the maximal possible degree is $l_1+l_2$, since these two (or more) differentiations do not increase the degree.
\end{proof}

\subsection{Several moments}

For \(s\in\mathbb{Z}_{\ge 1}\) and a subset
\(\{j_1,\dots ,j_p\}\subset\{1,2,\dots ,s\}\),
let \(\mathcal P^{s}_{j_1,\dots ,j_p}\) be the set of \emph{all
pairings} of
\(\{1,2,\dots ,s\}\setminus\{j_1,\dots ,j_p\}\).
The set \(\mathcal P^{s}_{j_1,\dots ,j_p}\)
is empty whenever the cardinality of
\(\{1,2,\dots ,s\}\setminus\{j_1,\dots ,j_p\}\) is odd.
Analogously, define \(\mathcal P^{2;s}_{j_1,\dots ,j_p}\)
as the set of \emph{all pairings} of
\(\{2,\dots ,s\}\setminus\{j_1,\dots ,j_p\}\).
For a pairing \(P\) we write \(\prod_{(a,b)\in P}\) for the
product over all pairs \((a,b)\) contained in \(P\).

\begin{proposition}
\label{lem:gauss-many-Schur}
For any $s,l_1,\ldots,l_s \in \mathbb{Z}_{\ge1}$, we have
\begin{equation*}
\begin{split}
&\frac{1}{V_N S_N} \sum_{i_1=1}^N (x_{i_1} \pa_{i_1})^{l_1}
\cdots
\sum_{i_s=1}^N (x_{i_s} \pa_{i_s})^{l_s} \left[ V_N S_N
\right]
\\
&
\hspace{10pt}=
\sum_{p=0}^s \sum_{ \{ j_1, \dots, j_p \} \in
[s] } \mathcal{F}_{(l_{j_1})} (\vec{x}) \dots
\mathcal{F}_{(l_{j_p})} (\vec{x}) \left( \sum_{ P \in
\mathcal P^s_{j_1, \dots, j_p} } \prod_{(a,b) \in P}
\GG_{(l_a, l_b)} (\vec{x}) + \tilde T_{j_1, \dots,
j_p}^{1;s} (\vec{x}) \right),
\end{split}
\end{equation*}
where $\tilde T_{j_1, \dots, j_p}^{1;s} (\vec{x})$ has
$N$-degree less than $\sum_{i=1}^s l_i - \sum_{i=1}^p l_{j_i} + \frac{s-p}{2}$.
\end{proposition}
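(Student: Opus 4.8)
The plan is to prove the identity by induction on $s$, peeling off one operator at a time. Writing $\Lambda_l\coloneqq\sum_{i=1}^N(x_i\pa_i)^l$ for the raw operator and $\widehat\Lambda_l(g)\coloneqq\frac{1}{V_N S_N}\Lambda_l\bigl[V_N S_N\,g\bigr]$ for its conjugate by $V_N S_N$, the left-hand side of the proposition is exactly $\widehat\Lambda_{l_1}\cdots\widehat\Lambda_{l_s}(1)$, and $\mathcal{F}_{(l)}=\widehat\Lambda_l(1)$. The base case $s=1$ is the definition of $\mathcal{F}_{(l_1)}$, and $s=2$ is precisely \Cref{lem:two-diff-terms}, the partial pairings of $\{1,2\}$ reproducing $\mathcal{F}_{(l_1)}\mathcal{F}_{(l_2)}$, the single pair $\GG_{(l_1,l_2)}$, and the error $\tilde T$.

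For the inductive step I would assume the formula for $\widehat\Lambda_{l_2}\cdots\widehat\Lambda_{l_s}(1)$, indexed by partial pairings of $\{2,\dots,s\}$, and apply $\widehat\Lambda_{l_1}$ to it. Expanding $(x_{i_1}\pa_{i_1})^{l_1}$ via the Leibniz rule across the product $V_N S_N\cdot(\text{product of }\mathcal{F}\text{'s and }\GG\text{'s})$, I classify the resulting terms by where the derivatives of the outer operator land. First, if all derivatives fall on $V_N S_N$, operator $1$ acts solo and multiplies the whole $(s-1)$-level expression by $\mathcal{F}_{(l_1)}$; this promotes a partial pairing $\pi$ of $\{2,\dots,s\}$ to the partial pairing $\{1\}\cup\pi$ of $[s]$ in which $1$ is solo. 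Second, if exactly one derivative of operator $1$ contracts with one of the solo factors $\mathcal{F}_{(l_j)}$, then by the same mechanism as in \Cref{lem:two-diff-terms} this converts $\mathcal{F}_{(l_j)}$ into $\GG_{(l_1,l_j)}$ at the leading $N$-degree $l_1+l_j+1$ (the combinatorial prefactor $l_1$ being already incorporated into the definition \eqref{eq:def-GGfunc}, so the net coefficient of $\GG_{(l_1,l_j)}$ is one); this realizes exactly the partial pairing of $[s]$ in which $1$ is paired with the previously-solo index $j$. Third, all remaining contributions --- operator $1$ contracting with a $\GG$-factor, contracting with more than one factor, placing more than one derivative on a single factor, or acting on the previous error term $\tilde T$ --- are collected into the new error $\tilde T^{1;s}_{j_1,\dots,j_p}$.

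The degree bookkeeping follows \Cref{def:degree-N} and the reasoning of \Cref{lem:two-diff-terms}: a solo operator raises the $N$-degree by $l+1$ (one free summation index together with $l$ derivatives acting on $V_N S_N$), whereas any contraction ties the summation index $i_1$ to an existing index and so loses one power of $N$, making a single clean contraction contribute degree $l_1+l_j+1$ (matching $\GG_{(l_1,l_j)}$) while double contractions, contractions onto an already-paired factor, or action on $\tilde T$ contribute strictly less. Assembling the first two cases reproduces precisely the sum over all partial pairings of $[s]$, since every partial pairing of $[s]$ arises uniquely from one of $\{2,\dots,s\}$ by either leaving $1$ solo or pairing it with a then-solo index; the third case is absorbed into $\tilde T^{1;s}_{j_1,\dots,j_p}$, whose $N$-degree is strictly below $\sum_{i=1}^s l_i-\sum_{i=1}^p l_{j_i}+\frac{s-p}{2}$, as claimed.

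The main obstacle is the degree estimate in the third case: one must verify that no combination of multiple or higher-order contractions can conspire to reach the leading degree $\sum_i l_i-\sum_i l_{j_i}+\frac{s-p}{2}$ of the Gaussian (Wick) term. This requires tracking, across the entire Leibniz expansion, how each extra derivative placed on an $\mathcal{F}$- or $\GG$-factor and each coincidence of summation indices strictly reduces the available powers of $N$ --- the same phenomenon that forces ``only pairings survive'' in \cite[Sections~5--6]{bufetov2016fluctuations}. A secondary, purely combinatorial point is to confirm that the ordering convention in $\GG_{(l_a,l_b)}$ (which distinguishes the outer operator) is consistent under the induction, i.e.\ that pairing the newest index $1$ with a solo index $j$ yields the same leading contribution irrespective of the order in which the two operators were introduced.
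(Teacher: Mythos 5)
Your proposal is correct and follows essentially the same route as the paper: an induction on $s$ in which one applies the conjugated operator $\frac{1}{V_N S_N}\sum_{i_1}(x_{i_1}\pa_{i_1})^{l_1}[V_N S_N\,\cdot\,]$ to the inductive expression and splits the Leibniz expansion into the same three cases --- all derivatives on $V_N S_N$ giving a new solo $\mathcal{F}_{(l_1)}$, exactly one derivative hitting an existing $\mathcal{F}_{(l_j)}$ turning it into $\GG_{(l_1,l_j)}$, and all other allocations being absorbed into the lower-degree error (with the paper likewise deferring the detailed degree estimates to \cite[Proposition~5.10]{bufetov2016fluctuations}).
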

\begin{proof}
The proof parallels that of \cite[Proposition 5.10]{bufetov2016fluctuations}. We now sketch the main ideas.
We argue by induction in $s$. For the induction step, one needs to analyze the expression
\begin{equation*}
\begin{split}
&
\frac{1}{V_N S_N} \biggl( \sum_{i_1=1}^N (x_{i_1} \pa_{i_1})^{l_1} \biggr) \biggl[ V_N S_N \sum_{p=0}^{s-1} \sum_{ \{ j_1, \dots, j_p \} \in [2;s] } \mathcal{F}_{(j_1)} (\vec{x}) \dots \mathcal{F}_{(j_p)} (\vec{x})
\\
&
\hspace{130pt}
\times \biggl( \sum_{ P \in \mathcal P^{2;s}_{j_1, \dots, j_p} } \prod_{(a,b) \in P} \GG_{(k_a, k_b)} (\vec{x}) + \tilde T_{j_1, \dots, j_p}^{2;s} (\vec{x}) \biggr) \biggr],
\end{split}
\end{equation*}
for any choice of the set of indices $J_{\mathrm{old}} \coloneqq \{ j_1, \dots, j_p \} \subset [2;s]$.
If all new differentiations act on \(V_N S_N\), they
generate a new function \(\mathcal{F}\). If exactly one
differentiation acts on an existing \(\mathcal{F}\), it
produces a new \(\GG\) in place of that \(\mathcal{F}\). Any
other allocation of differentiations does not yield a
sufficiently high \(N\)-degree, and thus does not contribute
to the leading term in the limit.
\end{proof}
%Note that an induction hypothesis asserts that $\tilde T_{j_1, \dots, j_p}^{2;s}$ has $N$-degree less than $\sum_{i=2}^s l_i - \sum_{i=1}^p l_{j_i}$.

For a positive integer $l$, define
\begin{equation}
\label{eq:def-e-l-const}
E_l \coloneqq \mathcal{F}_{(l)} (1^N)
=
\frac{1}{V_N S_N} \sum_{i=1}^N (x_{i} \pa_{i})^{l} \ssp
V_N S_N \biggl\vert_{\vec{x}=1}.
\end{equation}
This is the expectation of the $l$-th moment of the probability measure with the Schur generating function $S_N$.

\begin{lemma}
\label{lem:covar-general}
For any $s,l_1,\ldots,l_s\in \mathbb{Z}_{\ge1} $,
we have
\begin{multline*}
\frac{1}{V_N S_N}
\left( \sum_{i_1=1}^N (x_{i_1} \pa_{i_1})^{l_1} - E_{l_1} \right)
\left( \sum_{i_2=1}^N (x_{i_2} \pa_{i_2})^{l_2} - E_{l_2} \right)
\times
\dots
\\
\times
\left( \sum_{i_s=1}^N (x_{i_s} \pa_{i_s})^{l_s} - E_{l_s} \right) V_N S_N \bigg\vert_{\vec{x}=1} = \sum_{ P \in \mathcal P^{s}_{\emptyset} } \prod_{(a,b) \in P} \GG_{(l_a, l_b)} (\vec{x}) \bigg\vert_{\vec{x}=1} + \tilde T_{\emptyset} (\vec{x}) \bigg\vert_{\vec{x}=1},
\end{multline*}
where $\tilde T_{\emptyset} (\vec{x})$ has $N$-degree less than $\sum_{i=1}^s l_i+\frac{s}{2}$.
\end{lemma}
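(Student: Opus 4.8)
The plan is to read off the centered identity from the uncentered moment expansion of \Cref{lem:gauss-many-Schur}, exploiting the identity $E_l=\mathcal{F}_{(l)}(1^N)$ from \eqref{eq:def-e-l-const}. First I would expand the product of the $s$ centered operators by deciding, for each factor $t\in\{1,\dots,s\}$, whether it contributes the differential operator $\sum_{i_t}(x_{i_t}\pa_{i_t})^{l_t}$ or the scalar $-E_{l_t}$. Since the $E_{l_t}$ are constants, this rewrites the left-hand side as
\[
\sum_{A\subseteq\{1,\dots,s\}}\Bigl(\prod_{t\notin A}(-E_{l_t})\Bigr)\,
\frac{1}{V_N S_N}\Bigl(\prod_{t\in A}\sum_{i_t}(x_{i_t}\pa_{i_t})^{l_t}\Bigr)[V_N S_N]\Big\vert_{\vec{x}=1},
\]
where the operators indexed by $A$ are composed in increasing order of $t$. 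To each inner uncentered moment I would apply \Cref{lem:gauss-many-Schur} with $s$ replaced by $|A|$, and then set $\vec{x}=1$, so that every singled-out factor $\mathcal{F}_{(l_j)}(\vec x)$ becomes $\mathcal{F}_{(l_j)}(1^N)=E_{l_j}$. After this substitution the whole expression becomes a sum over triples: a set $A$ of kept operators, a subset $J\subseteq A$ of singled-out indices, and a pairing $P$ of $A\setminus J$, each term weighted by $\prod_{t\notin A}(-E_{l_t})\prod_{j\in J}E_{l_j}$ and carrying either a $\GG$-product $\prod_{(a,b)\in P}\GG_{(l_a,l_b)}(1^N)$ or an error $\tilde T^{A}_{J}(1^N)$.

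The key step is then to reorganize the $\GG$-part by the set $S:=A\setminus J$ of paired indices together with the pairing $P$ on it. For fixed $(S,P)$ the factor $\prod_{(a,b)\in P}\GG_{(l_a,l_b)}(1^N)$ is constant, while the remaining ``free'' indices $\{1,\dots,s\}\setminus S$ are summed over all ways of landing in $J$ (weight $+E_{l_t}$) or outside $A$ (weight $-E_{l_t}$). These placements are independent across free indices, so the free sum factors as $\prod_{t\in\{1,\dots,s\}\setminus S}(E_{l_t}-E_{l_t})$, which vanishes unless $S=\{1,\dots,s\}$. Thus every incomplete pairing cancels \emph{exactly}, and the only surviving $\GG$-contribution is the complete-pairing sum $\sum_{P\in\mathcal P^{s}_{\emptyset}}\prod_{(a,b)\in P}\GG_{(l_a,l_b)}(1^N)$, which is precisely the main term of the statement (empty, as it should be, when $s$ is odd).

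It remains to collect all error contributions $\tilde T^{A}_{J}(1^N)$ into a single $\tilde T_\emptyset$ of $N$-degree strictly below $\sum_i l_i+\tfrac{s}{2}$, and I expect this to be the main obstacle. A naive term-by-term estimate is not enough: using that each $E_l$ has $N$-degree at most $l+1$ (by \Cref{lem:sum-Schur-Vand-afterDiff}) and that $\tilde T^A_J$ has $N$-degree below $\sum_{i\in A\setminus J}l_i+\tfrac{|A\setminus J|}{2}$ (by \Cref{lem:gauss-many-Schur}), one only gets the bound $\sum_i l_i+s-\tfrac{|A\setminus J|}{2}$, which exceeds the target whenever $|A\setminus J|<s$. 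The resolution is to run the very same $+E_{l_t}/-E_{l_t}$ cancellation on the \emph{leading} $N$-degree part of the error terms. For this I would track, through the induction that proves \Cref{lem:gauss-many-Schur}, that a singled-out index enters $\tilde T^A_J$ only as a multiplicative spectator factor (an $\mathcal{F}$, hence $E_{l_j}$ at $\vec x=1$), so that the top-order behaviour of $\tilde T^A_J(1^N)$ depends on the free indices only through these factored-out $E$'s. Summing a free index over the states $j\in J$ and $t\notin A$ then cancels its leading contribution and leaves a strictly lower $N$-degree remainder, exactly as in the analogous step of \cite[Section~5]{bufetov2016fluctuations}. Once this spectator structure is verified, the only error surviving the reorganization is $\tilde T^{\{1,\dots,s\}}_{\emptyset}$, whose $N$-degree is below $\sum_i l_i+\tfrac{s}{2}$ by \Cref{lem:gauss-many-Schur}, which completes the proof.
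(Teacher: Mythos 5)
Your proposal is correct and takes essentially the same route as the paper: the paper's own proof is just a citation of the standard moments-to-cumulants combinatorial argument (\cite[Lemma~5.11]{bufetov2016fluctuations}), which is precisely the inclusion--exclusion expansion, reorganization by the set of paired indices, and $E_{l_t}-E_{l_t}=0$ cancellation that you spell out. Your identification of the error-term subtlety --- that naive degree bounds only give $\sum_i l_i + s - \tfrac{|A\setminus J|}{2}$, so one must track through the induction behind \Cref{lem:gauss-many-Schur} that singled-out indices enter the errors only as spectator $\mathcal{F}$-factors, allowing the same cancellation to act on the error terms --- is exactly the content hidden in that citation, so if anything your write-up is more detailed than the paper's.
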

\begin{proof}
The derivation of this lemma from \Cref{lem:gauss-many-Schur} follows the standard combinatorial argument that relates moments and cumulants of random variables; it is identical to the proof of \cite[Lemma~5.11]{bufetov2016fluctuations}.
\end{proof}

\subsection{Computation of covariance}
Here we finalize the proof of \Cref{th:CLT-gen} by computing the
limiting covariance. We need to establish formula
\eqref{eq:cov-general-statement}.

% \begin{proposition}
% Let $\rho = (\rho_N)_{N \ge1}$ be an asymptotically appropriate sequence of probability measures on $\GT_N$.
% We have
% \begin{multline}
% \label{eq:cov-general}
% \lim_{N \to \infty} \frac{1}{N^{k+l+1} } \left( \operatorname{\mathbf{E}} \left(  p_{k}^{(\rho_N)} p_{l}^{(\rho_N)} \right) -  \operatorname{\mathbf{E}} \left(  p_{k}^{(\rho_N)} \right) \operatorname{\mathbf{E}} \left(  p_{l}^{(\rho_N)} \right) \right)
% = \sum_{q=0}^{k-1} \sum_{r=0}^{l-1} \frac{k l}{ (q+1)! (r+1)!} \binom{l-1}{r} \binom{k-1}{q}
%  \\ \left. \times \pa_1^q \pa_2^r \left[ \left( \pa_1 \pa_2 F_{q+r+2}^{(\rho)} \left( 1+x_1, 1+x_2, 1+x_1 w_{q+1}, \dots, 1+x_1 w_{q+1}^q, 1+x_2 w_{r+1}, \dots, 1+x_2 w_{r+1}^r \right)
% \right. \right. \right.
% \\ \left. - \pa_1 F_{q+1}^{(\rho)} \left( 1+x_1, 1+x_1 w_{q+1}, \dots, 1+x_1 w_{q+1}^q \right) \cdot \pa_2 F_{r+1}^{(\rho)} \left( 1+x_2, 1+x_2 w_{r+1}, \dots, 1+x_2 w_{r+1}^r \right) \right)
%  \\
% \times (1+x_{1})^k \left( \pa_1 F_{q+1}^{(\rho)} (1+x_1, 1+x_1 w_{q+1}, \dots, 1+x_1 w_{q+1}^q) \right)^{k-1-q}
% \\
% \left. \left.
% \times (1+x_{2})^l \left( \pa_2 F_{r+1}^{(\rho)} (1+x_2, 1+x_2 w_{r+1}, \dots, 1+x_2 w_{r+1}^r) \right)^{l-1-r}
% \right] \right|_{x_1=0, x_2=0}
% \end{multline}
% \end{proposition}

We the indices $k,l\in \mathbb{Z}_{\ge1}$ in \eqref{eq:cov-general-statement}.
By \Cref{lem:two-diff-terms}, the left-hand side of \eqref{eq:cov-general-statement} before the limit
and without the factor $N^{-k-l-1}$ is equivalent
(in the sense of the top degree in $N$) to the expression
\begin{multline*}
\GG_{(k, l)} (1^N) = k \sum_{r=0}^{k-1}
\binom{k-1}{r} \sum_{\{a_1, \dots, a_{r+1} \} \subset [N]
} (r+1)! \\ \times Sym_{a_1, \dots, a_{r+1}} \frac{
x_{a_1}^{k} \pa_{a_1} \left[ \mathcal{F}_{(l)} \right]
\left( N \pa_{a_1} \sqrt[N]{S_N} \right)^{k-1-r}}
{(x_{a_1} -x_{a_2} ) \dots (x_{a_1}-x_{a_{r+1}})}
\bigg\vert_{x_1=\ldots=x_N=1}.
\end{multline*}
Recall that $\mathcal{F}_{(l)} (\vec{x})$ can also be written as a sum of symmetrizations over indices. Using this, we obtain
\begin{align*}
&\GG_{(k,l)} (1^N)
\approx k \sum_{q=0}^{k-1}
\sum_{\{a_1, \dots, a_{q+1} \}  \subset [N] } \binom{k-1}{q} \ssp (q+1)! \ssp\ssp {Sym}_{a_1, \dots, a_{q+1}} \frac{ x_{a_1}^{k} \left( N \partial_{a_1} \sqrt[N]{S_N} \right)^{k-1-q}} {(x_{a_1} -x_{a_2} ) \cdots (x_{a_1}-x_{a_{q+1}})}
\\
&\quad
\times\ssp \partial_{a_1} \bigg[ \sum_{r=0}^{l}
\sum_{\{b_1, \dots, b_{r+1} \} \subset [N] } \binom{l}{r} (r+1)! \ssp\ssp {Sym}_{b_1, \dots, b_{r+1}} \frac{x_{b_1}^l \left( N \frac{\partial_{b_1} \sqrt[N]{S_N}}{\sqrt[N]{S_N}} \right)^{l-r}}{(x_{b_1}- x_{b_2}) \cdots (x_{b_1}- x_{b_{r+1}})} \bigg]
\Bigg\vert_{x_1=\ldots=x_N=1}.
\end{align*}
Observe that the terms of maximal $N$-degree in this
expression (namely, of degree $N^{k+l+1}$),
arise precisely when
$\{a_1, \dots, a_{q+1}\} \cap \{b_1, \dots, b_{r+1}\} =
\varnothing$. In this case the outer differentiation
$\partial_{a_1}$ acts only on the factor
$(\partial_{b_1}\sqrt[N]{S_N})/\sqrt[N]{S_N}$ as
\begin{multline*}
\pa_{a_1} \left( \frac{\pa_{b_1} \sqrt[N]{S_N}}{\sqrt[N]{S_N}} \right)^{l-r} \\
= (l-r) \left( \frac{\pa_{b_1} \sqrt[N]{S_N}}{\sqrt[N]{S_N}} \right)^{l-r-1} \frac{(\pa_{a_1} \pa_{b_1} \sqrt[N]{S_N})\cdot \sqrt[N]{S_N} - (\pa_{b_1} \sqrt[N]{S_N})\cdot (\pa_{a_1} \sqrt[N]{S_N}) }{\sqrt[N]{S_N}^2}.
\end{multline*}
Applying \Cref{lem:main-sym} twice, passing to the limit, and performing minor transformations (which coincide with those performed in \cite[Proposition~6.4, case~1]{bufetov2016fluctuations}), we arrive at the right-hand side of the desired expression
\eqref{eq:cov-general-statement}.
This completes the proof of \Cref{th:CLT-tilings}.

\begin{remark}
	While the proof of Gaussianity is almost identical to that
	in \cite{bufetov2016fluctuations}, the computation of the
	covariance was more involved in the setting of
	that previous work. In our scaling regime,
	some of the terms considered in
	\cite{bufetov2016fluctuations} dominate the others, so we
	needed to evaluate only the contribution of these dominant
	terms, which reduced the workload.
\end{remark}

\section{CLT for domino tilings in iid environment}

We now apply the results of the previous
\Cref{sec:LLN,sec:CLT-gen} to the case of
domino tilings of Aztec diamond
with i.i.d.\ one-periodic weights
as in \Cref{fig:AztecWeights-intro}.

\begin{theorem}
  \label{th:CLT-tilings}
	 Let $\rho = (\rho_N)_{N \ge1}$ be measures on signatures
	as in \Cref{LLN:tilings-fixed}. Then the vector
\begin{equation}
\label{eq:scale-CLT-2}
\biggl( \frac{ p_k^{(\rho_N)} -
\raisebox{-1pt}{$\operatorname{\mathbf{E}}$}
\bigl[ p_k^{(\rho_N)} \bigr] }{N^{k+\frac12 } } \biggr)_{k
\ge 1}
\end{equation}
converges, as $N\to\infty$, to a Gaussian vector with zero
mean, and covariance given by
\begin{equation}
\label{eq:cov-iid-answer}
\begin{split}
	\lim_{N \to \infty} \frac{\operatorname{Cov}
	\bigl( p_{k}^{(\rho_N)}, p_{l}^{(\rho_N)}\bigr)}
	{N^{k+l+1}}
	&
	=
	\frac{1}{(2 \pi \ii)^2} \oint_{|z|=\varepsilon} \oint_{|w|=2 \varepsilon} \left( \frac{1}{z} +1 + (1+z) \ssp \mathsf{F} (1+z) \right)^{k}
	\\&
	\hspace{20pt}\times \left( \frac{1}{w} +1 + (1+w) \ssp \mathsf{F} (1+w) \right)^{l}\mathsf{G}(1+z,1+w)\, dz\ssp dw,
\end{split}
\end{equation}
where the integration contours are counterclockwise,
$\varepsilon \ll 1$, and the functions $\mathsf{F}(z)$
and $\mathsf{G}(z,w)$ are given by
\begin{equation}
\label{eq:F-G-def-s7}
\mathsf{F} (z) \coloneqq \operatorname{\mathbf{E}}_{\BB} \left( \frac{\bb}{1-\bb + \bb z} \right),
\qquad
\mathsf{G}(z,w)\coloneqq \operatorname{Cov}_{\BB} \left( \frac{\bb}{1-\bb + \bb z}, \frac{\bb}{1-\bb + \bb w} \right).
\end{equation}
\end{theorem}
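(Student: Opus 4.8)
The plan is to obtain \Cref{th:CLT-tilings} as a specialization of the general Central Limit Theorem \Cref{th:CLT-gen}, followed by a simplification of the abstract covariance \eqref{eq:cov-general-statement} into the contour-integral form \eqref{eq:cov-iid-answer}. First I would verify that the sequence $\rho_N$ is asymptotically appropriate in the sense of \Cref{def:SGF-appropriate}. This is essentially contained in the proof of \Cref{LLN:tilings-fixed}: since $S_{\rho_N}=\bigl(\operatorname{\mathbf{E}}_{\BB}\prod_{i=1}^N(1-\bb+x_i\bb)\bigr)^{M_N}$, we have $\sqrt[N]{S_{\rho_N}(u_1,\dots,u_k,1^{N-k})}\to F_k^{(\rho)}(u_1,\dots,u_k)=\bigl(\operatorname{\mathbf{E}}_{\BB}\prod_{i=1}^k(1-\bb+u_i\bb)\bigr)^{a}$ with $a=\lim_{N\to\infty}M_N/N$; the existence of all moments of $\BB$ guarantees analyticity of the base, and uniform convergence near $1^k$ follows from $M_N/N\to a$ together with the base being close to $1$ there. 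Plugging these $F_k^{(\rho)}$ into \Cref{th:CLT-gen} already yields Gaussianity of \eqref{eq:scale-CLT-2} and the covariance \eqref{eq:cov-general-statement}; what remains is purely computational.

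The engine of the simplification is the root-of-unity cancellation of \Cref{lem:roots-simplifying-sym}. Writing $\Pi=\operatorname{\mathbf{E}}_{\BB}\prod_i(1-\bb+u_i\bb)$, I would first record the elementary identity $\prod_{j=0}^{q}\bigl(1+x\,w_{q+1}^{\,j}\bb\bigr)=1+(-1)^{q}(x\bb)^{q+1}$, so that the product over the $q$ ``extra'' arguments collapses to $\prod_{j=1}^{q}\bigl(1+x\,w_{q+1}^{\,j}\bb\bigr)=\tfrac{1}{1+x\bb}+O(x^{q+1})$ and $\Pi=1+O(x^{q+1})$ at the relevant substitutions. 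Consequently, to the order that survives the operator $\partial_1^{q}\,(\cdot)\big|_{x_1=0}$, one gets
\[
\partial_1 F_{q+1}^{(\rho)}\bigl(1+x_1,1+x_1 w_{q+1},\dots,1+x_1 w_{q+1}^{\,q}\bigr)=a\,\operatorname{\mathbf{E}}_{\BB}\Bigl[\tfrac{\bb}{1+x_1\bb}\Bigr]+O(x_1^{q+1})=a\,\mathsf{F}(1+x_1)+O(x_1^{q+1}),
\]
and the same computation disposes of the factors $(1+x_1)^k\bigl(\partial_1 F_{q+1}^{(\rho)}\bigr)^{k-1-q}$ and $(1+x_2)^l\bigl(\partial_2 F_{r+1}^{(\rho)}\bigr)^{l-1-r}$, which become $(1+x_1)^k\bigl(a\,\mathsf{F}(1+x_1)\bigr)^{k-1-q}$ and $(1+x_2)^l\bigl(a\,\mathsf{F}(1+x_2)\bigr)^{l-1-r}$ up to terms that vanish after the derivatives are taken.

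The one genuinely new point is the ``connected'' bracket. Differentiating $F=\Pi^{a}$ gives $\partial_1\partial_2 F-(\partial_1 F)(\partial_2 F)=a(a-1)\Pi^{a-2}(\partial_1\Pi)(\partial_2\Pi)+a\Pi^{a-1}\partial_1\partial_2\Pi-a^2\Pi^{2a-2}(\partial_1\Pi)(\partial_2\Pi)$. Using $\Pi\approx 1$ and the collapse of the extra products, $\partial_1\Pi\to\operatorname{\mathbf{E}}_{\BB}[\bb/(1+x_1\bb)]$ and $\partial_1\partial_2\Pi\to\operatorname{\mathbf{E}}_{\BB}[\bb^2/((1+x_1\bb)(1+x_2\bb))]$, so the $a(a-1)$ and $-a^2$ terms combine with $a\,\partial_1\partial_2\Pi$ to leave
\[
a\Bigl(\operatorname{\mathbf{E}}_{\BB}\bigl[\tfrac{\bb}{1+x_1\bb}\tfrac{\bb}{1+x_2\bb}\bigr]-\operatorname{\mathbf{E}}_{\BB}\bigl[\tfrac{\bb}{1+x_1\bb}\bigr]\operatorname{\mathbf{E}}_{\BB}\bigl[\tfrac{\bb}{1+x_2\bb}\bigr]\Bigr)=a\,\mathsf{G}(1+x_1,1+x_2),
\]
which is exactly where the covariance $\mathsf{G}$ from \eqref{eq:F-G-def-s7} enters. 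I expect this to be the main obstacle: one must check that the mixed root-of-unity reductions remain valid with two independent sets of roots $w_{q+1}$ and $w_{r+1}$, and that all non-factorizing, subleading contributions are of strictly lower $N$-degree, so that the naive product cancels and precisely one covariance survives.

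With these substitutions in hand, the double sum over $q,r$ with the binomial and factorial weights in \eqref{eq:cov-general-statement} is resummed into a double contour integral exactly as the analogous single sum was turned into a contour integral in the proof of \Cref{LLN:tilings-fixed} (following \cite[Equation 6.2]{GorinBufetov2013free}) and as in \cite[Proposition 6.4]{bufetov2016fluctuations}. After the change of variables $z\mapsto 1+z$, each single-variable generating factor becomes $\tfrac1z+1+a(1+z)\mathsf{F}(1+z)$ and the connected factor becomes $a\,\mathsf{G}(1+z,1+w)$, reproducing \eqref{eq:cov-iid-answer} (the constant $a=\lim_{N\to\infty}M_N/N$ being carried along by the generating factors). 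Apart from the covariance identity above, every step parallels the deterministic computation, so the remaining work is careful degree bookkeeping rather than new ideas.
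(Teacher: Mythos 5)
Your proposal follows essentially the same route as the paper's own proof: verify that $\rho_N$ is asymptotically appropriate, feed $F_k^{(\rho)}=\bigl(\operatorname{\mathbf{E}}_{\BB}\prod_{i=1}^k(1-\bb+u_i\bb)\bigr)^a$ into \Cref{th:CLT-gen}, use \Cref{lem:roots-simplifying-sym} to discard the symmetric product beyond its free term, identify the connected bracket with a covariance over $\BB$, and resum the $(q,r)$-sums into a double contour integral via Cauchy's formula and the binomial theorem. Your explicit identity $\prod_{j=0}^{q}(1+x\ssp w_{q+1}^{j}\bb)=1+(-1)^{q}(x\bb)^{q+1}$ is a concrete implementation of the root-of-unity cancellation the paper invokes abstractly, and your computation of the connected bracket (the $a(a-1)$, $-a^2$ and $a\ssp\partial_1\partial_2\Pi$ terms collapsing to a single covariance) is exactly the paper's.

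The one substantive point is your bookkeeping of the constant $a$. Your derivation produces single-variable factors $\frac1z+1+a(1+z)\mathsf{F}(1+z)$ and connected factor $a\ssp\mathsf{G}(1+z,1+w)$, and you assert this ``reproduces'' \eqref{eq:cov-iid-answer}; but \eqref{eq:cov-iid-answer}, with $\mathsf{F},\mathsf{G}$ as defined in \eqref{eq:F-G-def-s7}, contains no $a$ at all, so the two expressions disagree unless $a=1$. A sanity check at $k=l=1$ supports your version: conditionally on the environment, $p_1^{(\rho_N)}-\binom{N}{2}=|\lambda^{(N)}|$ has quenched mean $N\sum_{j}\bb_j$ (a sum of $M_N$ i.i.d.\ terms) and quenched variance $O(N^2)$, so
\[
\operatorname{Var}\bigl(p_1^{(\rho_N)}\bigr)=N^2 M_N\operatorname{Var}_{\BB}(\bb)+O(N^2)\sim a\ssp N^{3}\operatorname{Var}_{\BB}(\bb),
\]
which matches the specialization of \eqref{eq:cov-general-statement} at $k=l=1$ (the bracket there equals $a(a-1)\operatorname{\mathbf{E}}_{\BB}[\bb]^2+a\operatorname{\mathbf{E}}_{\BB}[\bb^2]-a^2\operatorname{\mathbf{E}}_{\BB}[\bb]^2=a\operatorname{Var}_{\BB}(\bb)$), whereas the right-hand side of \eqref{eq:cov-iid-answer} evaluates, picking up only the $\frac1z\cdot\frac1w$ poles, to $\mathsf{G}(1,1)=\operatorname{Var}_{\BB}(\bb)$. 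So your formula is the correct specialization, and the paper's displayed answer (and its intermediate ``one can check'' display, which likewise drops the powers of $a$) is off by these factors --- either $\mathsf{F}$ and $\mathsf{G}$ in \eqref{eq:F-G-def-s7} should each absorb a factor of $a$, or the $a$'s must appear explicitly as in your derivation. You should therefore not claim literal agreement with \eqref{eq:cov-iid-answer}; state your formula and flag the discrepancy.
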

Recall that the subscript $\BB$ in the expectation
and covariance in \eqref{eq:F-G-def-s7} indicates that these
averages are not taken with respect to a random signature,
but rather with respect to the distribution $\BB$
defining the random environment.

% \colorbox{yellow}{\parbox{.7\textwidth}{on the covariance structure and Brownian stuff here}}

\begin{proof}[Proof of \Cref{th:CLT-tilings}]
Note that the sequence $\rho_N$ as in the
hypothesis of the theorem
is appropriate in the sense of \Cref{def:SGF-appropriate}.
We have
\begin{equation*}
    \lim_{N \rightarrow \infty} \sqrt[N]{S_{\rho_N} (u_1,
    \ldots, u_k, 1^{N - k})} = \left( \operatorname{\mathbf{E}}_{\BB} \prod_{i=1}^k \left( 1 - \bb + u_i \bb \right) \right)^{a} = F_k^{(\rho)} (u_1, \dots, u_k), \label{eq:n-th-root-asym-2}
  \end{equation*}
and
$$
\pa_1 F_k^{(\rho)} (u_1, \dots, u_k) = a \left( \operatorname{\mathbf{E}}_{\BB} \prod_{i=1}^k \left( 1 - \bb + u_i \bb \right) \right)^{a-1} \operatorname{\mathbf{E}}_{\BB} \left( \frac{\bb}{1-\bb+\bb u_1} \prod_{i=1}^k \left( 1 - \bb + u_i \bb \right) \right).
$$
Differentiating further, we obtain
\begin{equation*}
\begin{split}
&\pa_1 \pa_2 F_k^{(\rho)} (u_1, \dots, u_k)
= a (a-1) \left( \operatorname{\mathbf{E}}_{\BB} \prod_{i=1}^k \left( 1 - \bb + u_i \bb \right) \right)^{a-2}
\\ &
\hspace{40pt} \times \operatorname{\mathbf{E}}_{\BB} \left( \frac{\bb}{1-\bb+\bb u_2} \prod_{i=1}^k \left( 1 - \bb + u_i \bb \right) \right)
\operatorname{\mathbf{E}}_{\BB} \left( \frac{\bb}{1-\bb+\bb u_1} \prod_{i=1}^k \left( 1 - \bb + u_i \bb \right) \right)
\\ &
\hspace{20pt} + a \left( \operatorname{\mathbf{E}}_{\BB} \prod_{i=1}^k \left( 1 - \bb + u_i \bb \right) \right)^{a-1}
\operatorname{\mathbf{E}}_{\BB} \left[ \frac{\bb^2}{(1-\bb +u_1 \bb) (1 - \bb +u_2 \bb) } \prod_{i=1}^k \left( 1 - \bb + u_i \bb \right) \right].
\end{split}
\end{equation*}
We apply \Cref{th:CLT-gen}. It establishes the convergence of moments to a Gaussian vector, and it remains to show that the general formula \eqref{eq:cov-general-statement} for covariance vastly simplifies in our case of Schur measures with random parameters.

Indeed, by \Cref{lem:roots-simplifying-sym}, the function
$\prod_{i=1}^k (1-\bb+\bb u_i)$, which is symmetric in all
variables, does not contribute to the limit beyond its free
term (which is equal to $1$).
One can check
that the resulting covariance is given by
\begin{multline*}
\sum_{q=0}^{k-1} \sum_{r=0}^{l-1} \frac{k l}{ (q+1)!\ssp (r+1)!}
\ssp
\binom{l-1}{r} \binom{k-1}{q}\ssp
 \pa_1^{\ssp q} \pa_2^{\ssp r} \Bigl[ \mathsf{G} (1+x_1,1+x_2)
\\
\times
(1+x_{1})^k
\bigl( \mathsf{F} (1+x_1) \bigr)^{k-1-q}
(1+x_{2})^l
\bigl( \mathsf{F} (1+x_2) \bigr)^{l-1-r}
\Bigr]
\bigg\vert_{x_1=0, x_2=0}.
\end{multline*}
Using the Cauchy integral formula and the binomial theorem, this expression turns into the desired
right-hand side of \eqref{eq:cov-iid-answer}.
\end{proof}

\begin{remark}
Similarly to \Cref{sub:covariance-structure-decreasing-variance}, one can straightforwardly establish a multilevel analogue of \Cref{th:CLT-tilings} for domino tilings of the Aztec diamond
with random edge weights,
in the spirit of \Cref{th:main-decreasing-multilevel} and \cite[Theorem~2.11]{bufetov2016fluctuations}.
This would produce a
Brownian motion interpretation of
the multilevel version of the
covariance in \eqref{eq:cov-iid-answer}.
\end{remark}
%We do not address this question in further detail.

\newcommand{\etalchar}[1]{$^{#1}$}

\end{document}